\numberwithin{equation}{section}
\theoremstyle{plain} 
\newtheorem{thm}{Theorem}[section]
\newtheorem{lem}[thm]{Lemma}
\newtheorem{prop}[thm]{Proposition}
\newtheorem{asm}[thm]{Assumption}
\newtheorem{de}[thm]{Definition}
\newtheorem*{thm*}{Theorem}
\theoremstyle{remark}
\newtheorem{rmk}[thm]{Remark}
\newtheorem{ex}[thm]{Example}
\newcommand{\ii}{\mathrm{i}}
\newcommand{\cf}{\emph{c.f., }}
\newcommand{\wt}{\widetilde}
\newcommand{\diag}{\mathrm{diag}}
\newcommand{\bs}{\boldsymbol}
\newcommand{\la}{\langle}
\newcommand{\ra}{\rangle}
\newcommand{\Cg}{\color{magenta}}
\newcommand{\rf}[1]{(\ref{#1})}
\newcommand{\mb}[1]{\mathbf{#1}}
\newcommand{\sn}{\sqrt{N}}
\newcommand{\mG}{\mathcal{G}}
\newcommand{\mP}{\mathcal{P}}
\newcommand {\mbE}{\mathbb{E}}
\newcommand{\onh}{O_\prec(N^{-\frac12})}
\newcommand {\bu}{{\mb{u}}}
\newcommand {\bv}{{\mb{v}}}
\newcommand {\bw}{{\mb{w}}}
\newcommand {\lb}{\llbracket}
\newcommand {\rb}{\rrbracket}
\renewcommand{\mathbf}[1]{\bs{#1}}
\begin{document}

\begin{center}
\large\bf
{ Principal components of  spiked covariance matrices in the supercritical regime } 
\end{center}

\renewcommand{\thefootnote}{\fnsymbol{footnote}}	
\vspace{1cm}
\begin{center}
 \begin{minipage}[t]{0.3\textwidth}
\begin{center}
Zhigang Bao\footnotemark[1]  \\
\footnotesize {Hong Kong University of Science and Technology}\\
{\it mazgbao@ust.hk}
\end{center}
\end{minipage}
\hspace{8ex}
\begin{minipage}[t]{0.3\textwidth}
\begin{center}
Xiucai Ding\footnotemark[2]  \\ 
\footnotesize {Duke University}\\
{\it xiucai.ding@duke.edu}
\end{center}
\end{minipage}

\vspace{3ex}

\hspace*{3ex}\begin{minipage}[t]{0.3\textwidth}
 \begin{center}
Jingming Wang\footnotemark[4]\\
\footnotesize 
{Hong Kong University of Science and Technology}\\
{\it jwangdm@connect.ust.hk}
\end{center}
\end{minipage}
\hspace{8ex}
\begin{minipage}[t]{0.3\textwidth}
 \begin{center}
Ke Wang\footnotemark[3]\\
\footnotesize 
{Hong Kong University of Science and Technology}\\
{\it kewang@ust.hk}
\end{center}
\end{minipage}
\footnotetext[1]{Supported by Hong Kong RGC grant ECS 26301517, GRF 16300618, and GRF 16301519.}
\footnotetext[4]{Supported by Hong Kong RGC grant ECS 26301517, GRF 16300618, and GRF 16301519.}
\footnotetext[3]{Supported by Hong Kong RGC grant GRF 16301618 and HKUST Initiation Grant IGN16SC05.}

\renewcommand{\thefootnote}{\fnsymbol{footnote}}	

\end{center}
\vspace{1cm}
\begin{center}
 \begin{minipage}{0.8\textwidth} \footnotesize{ In this paper, we study the asymptotic behavior of the extreme eigenvalues and  eigenvectors of the spiked covariance matrices, in the supercritical regime. Specifically, we  derive the joint distribution of the extreme eigenvalues and the generalized components of their associated eigenvectors in this regime. }
\end{minipage}
\end{center}

\vspace{2mm}
 
 {\small
\footnotesize{\noindent\textit{Date}: \today}\\
 \footnotesize{\noindent\textit{Keywords}: random matrix, sample covariance matrix, eigenvector, spiked model, principal component}
 
 \footnotesize{\noindent\textit{AMS Subject Classification (2010)}: Primary 60B20, 62G20; secondary 62H10, 15B52, 62H25}
 \vspace{2mm}

 }

\thispagestyle{plain}

\section{Introduction}
In this paper, we consider the sample covariance matrices of the form
\begin{align}
Q=TXX^*T^*,
\end{align}
where $T$ is a $M\times M$ deterministic matrix and $X$ is a $M\times N$ random matrix with independent entries. Further, we assume that the population covariance matrix $\Sigma:=TT^*$ admits the following form
\begin{align}
\Sigma=I_M+S, \label{19071901}   
\end{align}
where $S$ is a fixed-rank deterministic Hermitian matrix.

Throughout the paper, we make the following assumptions.
\begin{asm}\label{assumption} 
\hfill \break

\noindent (i)({\it On dimensions}):  We assume that $M\equiv M(N)$ and $N$ are comparable and there exist constants $\tau_2>\tau_1>0$ such that 
\begin{align*}
y\equiv y_N=M/N\in (\tau_1,\tau_2). 
\end{align*}

\noindent (ii)({\it On $S$}): We assume that $S$ admits the following spectral decomposition
\begin{align}
S=\sum_{i=1}^r d_i \bv_i\bv_i^*, \label{19071902}
\end{align}
where $r\geq 1$ is a fixed integer. Here $d_1>\cdots>d_r >0$ are the ordered eigenvalues of $S$, and $\bv_i=(v_{i1},\ldots, v_{iM})^T$'s are the associated unit eigenvectors.

\noindent (iii)({\it On $X$}): For the matrix $X=(x_{ij})$, we assume that the entries $x_{ij}\equiv x_{ij}(N)$ are real random variables satisfying
\begin{align*}
\mathbb{E}x_{ij}=0,\qquad \mathbb{E}x_{ij}^2=1/N.
\end{align*}
Moreover, we assume the existence of  large moments, {\it i.e,} for any integer $p\geq3$, there exists a constant $C_p>0$, such that
\begin{align*}
\mathbb{E}\vert\sn x_{ij}\vert^p\leq C_p<\infty.
\end{align*}
We further assume that all $\sqrt{N}x_{ij}$'s possess the same $3$rd and $4$th cumulants, which are denoted by $\kappa_3$ and $\kappa_4$ respectively. 
\end{asm}

We denote by $\mu_1\geq \cdots\geq \mu_M$ the ordered eigenvalues of $Q$ and  $\mb{\xi}_i$ the unit eigenvector associated with $\mu_i$.   The matrix model $Q$ is usually referred to  as the spiked covariance matrix  in literature.  In the context of Random Matrix Theory (RMT), this model was first studied by Johnstone in \cite{Johnstone}. The primary interest of the spiked model $Q$ lies in the asymptotic behavior of a few largest  $\mu_i$'s and the associated $\mb{\xi}_i$'s when $N$ is large, under various assumptions of $d_i$'s and $\mathbf{v}_i$'s. Significant progress has been made on this topic, in the last few years.  It has been well known since the seminal work of Baik, Ben Arous and P\'{e}ch\'{e} \cite{BBP}  that the largest eigenvalues $\mu_i$'s undergo a phase transition (BBP transition) w.r.t. the size of $d_i$'s. On the level  of the first order limit,  when $d_i>\sqrt{y}$, the eigenvalue $\mu_i$  jumps out of the support of the Marchenko-Pastur law (MP law) and converges to a limit determined by $d_i$, while  in the case of  $d_i\leq \sqrt{y}$ it sticks to the right end of the MP law  $(1+\sqrt{y})^2$. On the level of the second order fluctuation, it was revealed in \cite{BBP} that a phase transition  for  $\mu_i$ takes place in the regime $d_i-\sqrt{y}\sim N^{-\frac13}$. Specifically, if $d_i-\sqrt{y}\ll N^{-\frac13}$ (subcritical regime), the eigenvalue $\mu_i$ still admits the Tracy-Widom type distribution; if $d_i-\sqrt{y}\gg N^{-\frac13}$ (supercritical regime), the eigenvalue  $\mu_i$ is asymptotically Gaussian; while if $d_i-\sqrt{y}\sim N^{-\frac13}$ (critical regime), the limiting distribution of the eigenvalue $\mu_i$ is a mixture of Tracy-Widom and Gaussian.  The works \cite{Johnstone} and \cite{BBP} are on real and complex spiked Gaussian covariance matrices respectively. On extreme eigenvalues, further study for more generally distributed  covariance matrices can be found in \cite{BaikS, CDM16, BGM11, Paul, BY08, BY12, bloemendal2016principal, DY19}. The limiting behavior of the extreme eigenvalues  have also been studied for various related models, such as  the finite-rank deformation of Wigner matrices \cite{CDM16, BGM11, CDF09,  CDF12,   FP07, KY13, KY14,  Peche06, PRS13},  the signal-plus-noise model \cite{BGN12, LV, Ding}, the general spiked $\beta$ ensemble \cite{BVirag13, BVirag16}, and also the finite-rank deformation of general unitary/orthogonal invariant matrices \cite{BGN11, BBCF, BBCFAOP}.  

In contrast, the study on the limiting behavior of the eigenvectors associated with the extreme eigenvalues is much less.   On the level of the first order limit, it is known that the $\mathbf{\xi}_i$'s  are delocalized and purely noisy in the subcritical regime, but has a bias on the direction of $\mathbf{v}_i$ in the supercritical regime.  We refer to \cite{BGN11, BGN12, Capitaine17, Ding, Paul, bloemendal2016principal, DY19} for more details of such a phenomenon. It was recently noticed in \cite{ bloemendal2016principal} that a $d_i$ close to the critical point can cause a bias even for the non-outlier eigenvectors. On the level of the second order fluctuation, it was proved in \cite{ bloemendal2016principal} that the eigenvectors are asymptotically Gaussian in the subcritical regime, for the spiked covariance matrices. In the supercritical regime, a non-universality phenomenon was shown in \cite{CDM18} and \cite{BDW} for the eigenvector distribution for the finite-rank deformation of Wigner matrices and the signal-plus-noise model, respectively. The non-universality phenomenon in the supercritical regime has been previously observed in \cite{CDF09, KY13, KY14} for the extreme eigenvalues of the  finite-rank deformation of Wigner matrices.  Here we also refer to \cite{MJMY, JY, FFHL} for related study on the extreme eigenstructures of various finite-rank deformed models from more statistical perspective.   

In this paper, we will establish the joint distribution of the extreme eigenvalues and the associated  eigenvectors for the spiked covariance matrices, in the supercritical regime.  This is the primary goal of the Principal Component Analysis from statistics point of view. More specifically, in this paper, we are interested in the joint distribution of the largest $\mu_i$'s and the  generalized component of the top eigenvectors, i.e., the projections of those eigenvectors onto a general direction. More specifically, let $\mb{w}\in S^{M-1}_\mathbb{R}$ be any deterministic unit vector. We will study the limiting distribution of $(\mu_i, |\langle\mb{w},\mb{\xi}_i\rangle|^2)$ in the supercritical regime.

Since we will focus on the supercritical regime, we  further make the following assumption. For brevity, in the sequel, we use the notation $\lb 1, m\rb:=\{1,\cdots,m\}$.
\begin{asm} \label{supercritical}
There exists a fixed $\delta>0$ and an integer $r_0\in \lb1, r\rb$ such that for $i\in \lb1, r_0\rb$,
\begin{align}
d_i\geq y^{1/2}+\delta\quad\text{and} \quad \min_{j\neq i\in  \lb 1, r_0\rb}|d_j-d_i|\geq \delta. \label{asd}
\end{align}
\end{asm}
\begin{rmk}
The first inequality ensures the existence of the supercritical regime,  and the second inequality ensures that those outlying eigenvalues are well-separated from each other. In both inequalities, the fixed constant $\delta$ may be  replaced by smaller $N$-dependent ones. One can also consider the case that $d_i$'s are not simple, i.e., the multiplicity of certain $d_i$ is larger than 1.  We refer to \cite{bloemendal2016principal} for related discussion. Further extension along this direction will be considered in the future work. 
\end{rmk}

Our main results will be stated in Theorem \ref{rankrmainthm}, after necessary notations are introduced. 
For simplicity,   we will work with the setting 
\begin{align}
T= \Sigma^{\frac12}. \label{19071802}
\end{align} 
But our results hold for more general $T$ satisfying $\Sigma=TT^*$. Extension along this direction has been discussed in Section 8 of \cite{bloemendal2016principal}. We refer to Remark \ref{rmk.extension} for more details. 

For any $\mathbf{w}\in S_{\mathbb{R}}^{M-1}$, we 
do the decomposition 
\begin{align}
\mb{w}=\sum_{j=1}^r \langle\mb{w}, \bv_j\rangle \bv_j+\bu, \quad \text{where $ \bu\in \mathrm{Span}\{\bv_1\cdots,\bv_r\}^\perp$}. \label{19071820}
\end{align}
Hereafter,  we take (\ref{19071820}) as the definition of $\mathbf{u}$. 
 Further, we introduce the following two shorthand notations
 \begin{align}
& \mb{\varsigma}_i:=\frac{2\sqrt{1+d_i}(d_i^2-y)}{d_i^2(d_i+y)}\Big(\sum_{j\neq i}\la\mb{w},\bv_j\ra\frac{d_i\sqrt{d_j+1}}{d_i-d_j}\bv_j+\bu\Big), \nonumber\\
& \widehat{\bv}_i:=\la \mb{w},\mb{v}_i\ra\frac{y(1+d_i)}{d_i^2(d_i+y)}\Big(1+\frac{d_i(d_i+1)}{d_i+y}\Big)\bv_i. \label{def_eta_hatvi}
 \end{align}
 For any vectors $\mathbf{a}_\gamma=(a_\gamma(i))\in \mathbb{R}^M, \gamma=1,2,3$, we set the notations 
\begin{align}
&\mathbf{s}_{k}(\mathbf{a}_1):= \sum_{j=1}^M a_1(j)^k, \quad\mathbf{s}_{k,l}(\mathbf{a}_1,\mathbf{a}_2):= \sum_{j=1}^M a_1(j)^k a_2(j)^l, \nonumber\\
&\mathbf{s}_{k,l,t}(\mathbf{a}_1,\mathbf{a}_2,\mathbf{a}_3) := \sum_{j=1}^M a_1(j)^k a_2(j)^l a_3(j)^t. \label{def:s12}
\end{align}

We adopt the notion of {\it stochastic domination}  introduced in \cite{EKY2013}, which provides a precise statement of the form ``$X_N$ is bounded by $Y_N$ up to a small power of $N$ with high probability''.
\begin{de}(Stochastic domination) Let
\begin{align*}
X=\big(X_N(u): N\in \mathbb{N}, u\in U_N\big), \quad Y=\big(Y_N(u): N\in \mathbb{N}, u\in U_N\big)
\end{align*}
 be two families of nonnegative random variables, where $U_N$ is a possibly $N$-dependent parameter set. We say that $X$ is bounded by $Y$, uniformly in $u$, if for all small $\epsilon>0$ and large $\phi>0$, we have
 \begin{align*}
 \sup_{u\in U_N}\mathbb{P}\big(X_N(u)>N^{\epsilon}Y_N(u)\big)\leq N^{-\phi}
 \end{align*}
 for large $N\geq N_0(\epsilon, \phi)$. Throughout the paper, we use the notation $X=O_\prec(Y)$ or $X\prec Y$ when $X$ is stochastically bounded by $Y$ uniformly in $u$.  
 
 In addition,  we also say that an $n$-dependent event $E\equiv E(n)$ holds with high probability if, for any large $\varphi>0$,  
\begin{align*}
\mathbb{P}(E) \geq 1- n^{-\varphi},
\end{align*}
for sufficiently large $n \geq n_0( \varphi)$.
\end{de}

It is  convenient to introduce the following notion of convergence in distribution.
\begin{de} \label{defn_asymptotic}
Two sequences of random vectors, $\mathsf X_n \in \mathbb{R}^k$ and $\mathsf Y_n \in \mathbb{R}^k$, $n\geq 1$,  are \emph{asymptotically equal in distribution}, denoted as $\mathsf X_n \simeq \mathsf Y_n,$ if they are tight and satisfy
\begin{equation*}
\lim_{n \rightarrow \infty} \big( \mathbb{E}f(\mathsf X_n)-\mathbb{E}f(\mathsf Y_n) \big)=0
\end{equation*}   
for any bounded continuous function $f:\mathbb{R}^k\to \mathbb{R}$. 
\end{de}

Furthermore, for a vector $\mathbf{a}$, we use $\|\mathbf{a}\|$  to denote its   $\ell^2$-norm. Let $\{\mathbf{e}_i\}_{i=1}^M$  be the standard basis of $\mathbb{R}^M$. For a random variable $\xi$, we denote its $l$th cumulant by $\kappa_l(\xi)$. 

With the above notations, we can now state our main Theorem.

 \begin{thm}\label{rankrmainthm}
Suppose that Assumptions \ref{assumption},  \ref{supercritical}
 and the setting (\ref{19071802}) hold.  Then for any deterministic unit vector $\mb{w}\in S^{M-1}_{\mathbb{R}}$  and  $i\in \lb 1, r_0\rb$, there exist random variables $\Upsilon_i, \Theta_i^{\mb{w}}$ and $\Lambda_i^{\mb{w}}$ such that 
\begin{align} \label{thmdecompeigenv}
\mu_i=1+d_i+y+\frac{y}{d_i}+\frac{1}{\sqrt{N}}\Upsilon_i+O_\prec\Big(\frac 1N\Big), 
\end{align}
and 
 \begin{align}
 |\langle \mb{w}, \mb{\xi}_i\rangle|^2&=\frac{d_i^2-y}{d_i(d_i+y)}|\la \mb{w},\mb{v}_i\ra|^2+\frac{\la \mb{w},\mb{v}_i\ra}{\sn}\Theta_i^{\mb{w}}+\frac{1}{N}(\Lambda_i^{\mb{w}})^2\nonumber\\
 &\qquad\qquad +O_\prec\Big(\frac{|\la \mb{w},\mb{v}_i\ra|}{N}\Big)+O_\prec(N^{-\frac32}), \label{rankrdecompv3}
 \end{align}
 and
\begin{align}
\big(\Upsilon_i,\Theta_i^{\mb{w}}, \Lambda_i^{\mb{w}}\big)\simeq \mathcal{N}\big(0,\mathcal{V}_i^{\mb{w}}\big).
 \end{align}
 Here, $\mathcal{N}\big(0,\mathcal{V}_i^{\mb{w}}\big)$ represents a Gaussian vector with mean $0$ and  covariance matrix $\mathcal{V}_i^{\mb{w}}$ defined entrywise by
 \begin{align}
\mathcal{V}_i^{\mb{w}}(1,1)=&\frac{(1+d_i)^2(d_i^2-y)^2}{d_i^4}\Big(\frac{2d_i^2}{d_i^2-y}+\kappa_4\mathbf{s}_4(\bv_i)\Big),\nonumber\\
  \mathcal{V}_i^{\mb{w}}(1,2)=&\langle \bw, \bv_i\rangle\frac{2y(1+d_i)^3}{d_i(d_i+y)^2}+\frac{(1+d_i)(d_i^2-y)}{d_i^2}\kappa_4\mathbf{s}_{1,3}(\widehat{\bv}_i+\mb{\varsigma}_i,\bv_i), \nonumber\\
\mathcal{V}_i^{\mb{w}}(1,3)=&-\frac{d_i+1}{2d_i}\sqrt{\frac{(d_i+y)(d_i^2-y)}{{d_i}}}\mathbf{s}_{1,3}(\mb{\varsigma}_i,\bv_i),\nonumber\\
\mathcal{V}_i^{\mb{w}}(2,2)=&\frac{d_i^2}{d_i^2-y}\Vert\widehat{\bv}_i+\mb{\varsigma}_i\Vert^2+\kappa_4\mathbf{s}_{2,2}(\widehat{\bv}_i+\mb{\varsigma}_i,\bv_i)\nonumber\\
 &+\langle \bw, \bv_i\rangle\frac{d_iy+d_i+2y}{(d_i+y)^2}\mathbf{s}_{1,1}(\widehat{\bv}_i,\bv_i)+\langle \bw, \bv_i\rangle^2\frac{y(1+d_i)(d_i^2-y)}{d_i^2(d_i+y)^3},\nonumber\\
\mathcal{V}_i^{\mb{w}}(2,3)= &\frac{1}{2}\sqrt{\frac{d_i(d_i+y)}{d_i^2-y}} \Big(\frac{d_i^2}{d_i^2-y}\Vert\mb{\varsigma}_i\Vert^2+\kappa_4\mathbf{s}_{2,2}(\mb{\varsigma}_i,\bv_i)-\kappa_4\mathbf{s}_{1,1,2}(\mb{\varsigma}_i,\widehat{\bv}_i,\bv_i)\Big),\nonumber\\
\mathcal{V}_i^{\mb{w}}(3,3) = &\frac{d_i(d_i+y)}{4(d_i^2-y)}\Big(\frac{d_i^2}{d_i^2-y}\Vert\mb{\varsigma}_i\Vert^2+\kappa_4\mathbf{s}_{2,2} (\mb{\varsigma}_i, \bv_i)  \Big). \label{V_22}
 \end{align}
 \end{thm}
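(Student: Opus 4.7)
The plan is to reduce the analysis to the non-spiked Green's function $G(z):=(XX^*-z)^{-1}$ via a master-equation / contour-integral approach, then extract the Gaussian fluctuations from an isotropic CLT for bilinear forms of $G$. Since $\Sigma=I+S$ with $S=\sum_i d_i\bv_i\bv_i^*$, the Woodbury identity gives $(I+S)^{-1}=I-V\tilde D V^*$ with $V=(\bv_1,\ldots,\bv_r)$ and $\tilde D=\operatorname{diag}(d_i/(1+d_i))$. Substituting $\mb{\zeta}=\Sigma^{1/2}\mb{\xi}_i$ into $Q\mb{\xi}_i=\mu_i\mb{\xi}_i$ then yields, for every outlier $\mu_i$ outside the spectrum of $XX^*$, the master equation
\begin{equation*}
\det\!\bigl(\tilde D^{-1}+\mu_i\,V^*G(\mu_i)V\bigr)=0,
\end{equation*}
while the generalized eigenvector component is recovered from the contour integral $|\la\bw,\mb{\xi}_i\ra|^2=-\frac{1}{2\pi\ii}\oint_{\Gamma_i}\bw^*(Q-z)^{-1}\bw\,dz$ along a small contour enclosing only $\mu_i$. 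A Schur complement against $\operatorname{Range}(V)$ expresses $\bw^*(Q-z)^{-1}\bw$ as an explicit algebraic combination of the $r\times r$ matrix $V^*G(z)V$, the vector $V^*G(z)\bw$, and the scalar $\bw^*G(z)\bw$.

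For the eigenvalue expansion I would invoke the isotropic local law for $G$, which yields $V^*G(z)V=m(z)I_r+O_\prec(N^{-1/2})$ uniformly for $z$ in a neighborhood of the classical outlier locations $\theta(d):=(1+d)(1+y/d)=1+d+y+y/d$, with $m(z)$ the Stieltjes transform of the Marchenko--Pastur law. The deterministic limit of the master equation reads $\theta(d_i)m(\theta(d_i))=-(1+d_i)/d_i$, identifying $\theta(d_i)$ as the first-order location of $\mu_i$. Writing $\mu_i=\theta(d_i)+\delta_i$ and linearizing the master equation in $\delta_i$ and in the fluctuation $\mathcal E_{ii}(z):=z\bigl(\bv_i^*G(z)\bv_i-m(z)\bigr)$ at $z=\theta(d_i)$ gives
\begin{equation*}
\delta_i=-\,\mathcal E_{ii}(\theta(d_i))\Big/\bigl[(zm(z))'|_{z=\theta(d_i)}\bigr]+O_\prec(N^{-1}),
\end{equation*}
so that $\Upsilon_i/\sqrt N$ equals this principal part. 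Off-diagonal entries of $V^*GV$ do not contribute at this order because of the separation condition in Assumption \ref{supercritical}.

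For the eigenvector, substituting the Schur expression into the contour integral and computing the residue at $\mu_i$ produces an algebraic ratio of $V^*GV$, $V^*G\bw$ and $\bw^*G\bw$ evaluated at $z=\mu_i$. The deterministic approximations yield the bias $\frac{d_i^2-y}{d_i(d_i+y)}|\la\bw,\bv_i\ra|^2$ as the leading term. Expanding one order further, the subleading contributions split naturally into terms proportional to $\la\bw,\bv_i\ra\times(\text{isotropic fluctuation})$ and terms independent of $\la\bw,\bv_i\ra$ which are squares of isotropic fluctuations evaluated on the orthogonal component $\bu$. Matching coefficients, the first group identifies $\Theta_i^{\bw}/\sqrt N$ as a linear combination of $\sqrt N\bigl(\mb a^*G(\theta(d_i))\mb b-m(\theta(d_i))\la\mb a,\mb b\ra\bigr)$-type fluctuations together with the eigenvalue correction $\delta_i$, with the auxiliary vectors $\widehat{\bv}_i$ and $\mb{\varsigma}_i$ from \eqref{def_eta_hatvi} absorbing the coefficients; the second group coincides with $(\Lambda_i^{\bw})^2/N$ as the square of a Gaussian isotropic fluctuation associated with $\mb{\varsigma}_i$.

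The remaining and hardest step is the joint asymptotic normality of the vector of rescaled isotropic fluctuations $\bigl(\sqrt N\,\mb a^*G(\theta(d_i))\mb b-\sqrt N\,m(\theta(d_i))\la\mb a,\mb b\ra\bigr)$ as $(\mb a,\mb b)$ ranges over the deterministic pairs drawn from $\{\bv_j,\bw,\bu,\mb{\varsigma}_i,\widehat{\bv}_i\}$, together with identification of the covariance. I would execute this via a cumulant (Stein-type) expansion for $\mb a^*G\mb b$: Gaussian integration-by-parts produces the universal piece proportional to $\frac{d_i^2}{d_i^2-y}\la\mb a,\mb b\ra^2$-type contractions, the third-cumulant terms cancel to the required order by symmetry, and the fourth cumulant furnishes the non-universal corrections $\kappa_4\,\mathbf s_{2,2}(\mb a,\mb b)$, $\kappa_4\,\mathbf s_{1,3}(\mb a,\mb b)$ and $\kappa_4\,\mathbf s_4(\bv_i)$ visible in \eqref{V_22}. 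The principal obstacle is keeping the cumulant expansion sharp enough to retain the precise constants in all cross-covariances between $\Upsilon_i$, $\Theta_i^{\bw}$ and $\Lambda_i^{\bw}$, and then performing the lengthy algebraic reduction to the six entries of $\mathcal V_i^{\bw}$ using the Marchenko--Pastur identity $\theta(d_i)m(\theta(d_i))=-(1+d_i)/d_i$ and the explicit value of $(zm(z))'$ at $z=\theta(d_i)$.
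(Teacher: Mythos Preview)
Your outline is essentially the same strategy as the paper's: a master equation and contour-integral representation reducing everything to quadratic forms of the non-spiked Green function $\mG_1=(XX^*-z)^{-1}$, followed by a joint CLT for those quadratic forms established via cumulant expansion. The paper carries this out in Lemma~\ref{representation}, Lemma~\ref{lem:normal} and Proposition~\ref{recursivemP}.

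There is, however, one concrete omission in your list of fluctuation quantities. You describe the CLT input as the family $\sqrt{N}\bigl(\mb a^*G(\theta(d_i))\mb b-m(\theta(d_i))\la\mb a,\mb b\ra\bigr)$ over deterministic pairs $(\mb a,\mb b)$. This is not enough. When you compute the contour integral for $|\la\bw,\mb\xi_i\ra|^2$, the kernel $L(z)=(D^{-1}+zm_1(z))^{-1}$ has a simple pole at $z=d_i$, and after the resolvent expansion the integrand acquires a \emph{double} pole at $d_i$ in the terms containing $\wt w_i^2$. The residue therefore produces a $z$-derivative, and the expansion \eqref{rankrdecomp} necessarily involves $\chi_{ii}'(\theta(d_i))=\bv_i^*\mG_1'(\theta(d_i))\bv_i-m_1'(\theta(d_i))$, i.e.\ a quadratic form of $\mG_1^2$ rather than $\mG_1$. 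Consequently the joint CLT must be proved for the $(r{+}2)$-vector
\[
\sqrt{N}\bigl(\chi_{i1},\ldots,\chi_{ir},\,\chi_{\bu i},\,\chi_{ii}'\bigr)\big|_{z=\theta(d_i)},
\]
and the recursive moment estimate has to handle $\mG_1$ and $\mG_1^2$ simultaneously. This is why several entries of $\mathcal V_i^{\bw}$ (in particular $\mathcal V_i^{\bw}(2,2)$ and the cross term $\mathcal V_i^{\bw}(1,2)$) involve higher $z$-derivatives of $m_1$ and $zm_2m_1^2$; they cannot be recovered from a CLT stated only for $G$ at a single point. Your plan is otherwise sound, but be sure to include this derivative component explicitly in the Stein/cumulant computation.
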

 
 \begin{rmk} Here we remark that in the supercritical regime, a generalized CLT for the eigenvalues  has been established in \cite{BY08} previously.   
On eigenvectors, in the supercritical regime, a large deviation estimate for the generalized  components of the eigenvectors has also been given in \cite{bloemendal2016principal}. In addition, the Gaussianity of  the generalized components of eigenvectors in the subcritical regime has been proved in  \cite{bloemendal2016principal}. 
 \end{rmk}
 
 \begin{rmk} In Assumption \ref{assumption}, we assume that the third and fourth cumulants of all $\sqrt{N}x_{ij}$'s are the same. But our proof can be directly adapted to the more general setting that the third and fourth cumulants of  $\sqrt{N}x_{ij}$'s are $(ij)$-dependent. 
 \end{rmk}
 
 \begin{rmk}\label{rmk.extension} In (\ref{19071802}), we assumed $T=\Sigma^{\frac12}$. But our result holds under much more general assumption on $T$. We can indeed extend our result to the matrix $Q=TXX^*T^*$ with $(M+k)\times N$ random matrix $X$ and $M\times (M+k)$ matrix $T$. As long as $k\in \mathbb{N}$ is  fixed and $TT^*=\Sigma$ satisfying (\ref{19071901}), our result remains true. Such an extension  has been discussed in Section 8 of \cite{bloemendal2016principal}. The discussion in \cite{bloemendal2016principal} relies on the rewriting  $Q=TXX^*T^*=\Sigma^{\frac12}YY^*\Sigma^{\frac12}$, where $Y= (I_M\;0)OX$. Here $0$ is the $M\times k$ zero matrix and $O$ is some $(M+k)\times (M+k)$ orthogonal matrix.  It will be clear that all our arguments in the proof of Theorem \ref{rankrmainthm} are valid, as long as the isotropic local law of the matrix $XX^*$ (\cf Theorem \ref{isotropic}) holds. So here it would be sufficient to have the isotropic local law for the matrix $YY^*$, which has been demonstrated in Theorem 8.1 of  \cite{bloemendal2016principal}. 
 \end{rmk}

In the sequel, we provide some simple examples with  special choices of $\mb{w}$. 
 \begin{ex}
 Let $\mb{w}=\bv_i$. The expansion \eqref{rankrdecompv3} can be simplified to 
 \begin{align}
 |\la\bv_i, \mb{\xi}_i\ra|^2=\frac{d_i^2-y}{d_i(d_i+y)}+\frac{1}{\sn}\Theta_i+O_\prec(\frac{1}{N}),
 \end{align}
and $\Theta_i\simeq\mathcal{N}(0,\mathcal{V}_{i}(2,2))$ with explicit expression
 \begin{align}
\mathcal{V}_{i}(2,2)=\frac{d_i^2}{d_i^2-y}\Vert\widehat{\bv}_i\Vert^2+\frac{d_iy+d_i+2y}{(d_i+y)^2}\mathbf{s}_{1,1}(\widehat{\bv}_i,\bv_i)+\frac{y(1+d_i)(d_i^2-y)}{d_i^2(d_i+y)^3}+\kappa_4\mathbf{s}_{2,2}(\widehat{\bv}_i,\bv_i).
 \end{align}
Here 
 \begin{align*}
 \widehat{\bv}_i=\frac{y(1+d_i)}{d_i^2(d_i+y)}\Big(1+\frac{d_i(d_i+1)}{d_i+y}\Big)\bv_i.
 \end{align*}
 \end{ex}

\begin{ex}
Let $\mb{w}\in \{\mb{v}_i\}^{\perp}$. In this case, we have $\widehat{\bv}_i=0$. We then get from \eqref{rankrdecompv3} that
\begin{align}
 |\la \mb{w}, \mb{\xi}_i\ra|^2=\frac{1}{N}(\Lambda_i^{\mb{w}})^2+O_\prec(N^{-\frac32}),
\end{align}
and $\Lambda_i^{\mathbf{w}}\simeq\mathcal{N}(0,\mathcal{V}_i^{\mb{w}}(3,3))$ where   
\begin{align*}
\mathcal{V}_i^{\mb{w}}(3,3)=\frac{d_i(d_i+y)}{4(d_i^2-y)}\Big(\frac{d_i^2}{d_i^2-y}\Vert\mb{\varsigma}_i\Vert^2+\kappa_4\mathbf{s}_{2,2} (\mb{\varsigma}_i, \bv_i)  \Big),
\end{align*}
with $\mb{\varsigma}_i$ defined in the first equation of \rf{def_eta_hatvi}.
\end{ex}

\vspace{2ex}
{\bf Organization:} The paper is organized as the following: In Section \ref{s. pre}, we introduce some basic notions and preliminary results for later discussion. Section \ref{secgfr} is devoted to the Green function representations of our eigenvalue and eigenvector statistics. Then in Section \ref{s.proof of thm}, we prove our main result, Theorem \ref{rankrmainthm}, based on a key technical recursive moment estimate for  some Green function statistics, see Proposition \ref{recursivemP}.  The proof of Proposition \ref{recursivemP} is then postponed to Section \ref{sec:prop}.  In addition, in Appendix \ref{s.derivative of G}, we collect some basic formulas concerning the derivatives of Green function, for the convenience of the reader.

\section{Preliminaries} \label{s. pre}
In this section, we collect some basic notions and preliminary results which will be used in the proof of our main theorem. A key technical input is the isotropic local law from \cite{bloemendal2014isotropic, knowles2017anisotropic}.

\subsection{Basic notions}\label{basicnotations}

In the sequel,  we denote the Green function of $Q$ by
\begin{align}
G(z):=(Q-z)^{-1}, \quad z\in \mathbb{C}^+. \nonumber
\end{align}
The matrix $Q$ can be regarded as a finite-rank perturbation of the matrix $H:=XX^*$. In the sequel, we also need to consider $\mathcal{H}:=X^*X$ which shares the same non-zero eigenvalues with $H$. We further denote the Green functions of $H$ and $\mathcal{H}$ respectively  by
\begin{align}\label{def:green}
\mG_1(z):=(XX^*-z)^{-1}&, \qquad \mG_2(z):=(X^*X-z)^{-1}, \quad z\in \mathbb{C}^+.  
\end{align}
We also denote the  normalized traces of $\mG_1(z)$ and $\mG_2(z)$ by 
\begin{align*}
m_{1N}(z):=\frac{1}{M}\text{Tr}\mG_1(z)=\int(x-z)^{-1}\,dF_{1N}(x)&, \quad m_{2N}(z):=\frac{1}{N}\text{Tr}\mG_2(z)= \int(x-z)^{-1}\,dF_{2N}(x),
\end{align*}
where  $F_{1N}(x)$, $F_{2N}(x)$ are the empirical distributions of $H$ and $\mathcal{H}$ respectively, i.e.,
\begin{align}
F_{1N}(x):=\frac{1}{M}\sum_{i=1}^M \mathbbm{1}(\lambda_i(H)\leq x), \quad F_{2N}(x):=\frac{1}{N}\sum_{i=1}^N \mathbbm{1}(\lambda_i(\mathcal{H})\leq x). \nonumber
\end{align}
Here we used $\lambda_i(H)$ and $\lambda_i(\mathcal{H})$  to denote the $i$-th largest eigenvalue of $H$ and $\mathcal{H}$, respectively. 

It is well-known since \cite{MP67} that $F_{1N}(x)$ and $F_{2N}(x)$ converge weakly (a.s.) to the {\it Marchenko-Pastur} laws 
$\nu_{\text{MP},1}$ and $\nu_{\text{MP},2}$ (respectively) given below
\begin{align}
&\nu_{\text{MP},1}({\rm d}x):=\frac{1}{2\pi xy}\sqrt{\big((\lambda_+-x)(x-\lambda_-)\big)_+}{\rm d}x+(1-\frac{1}{y})_+\delta({\rm d}x),\nonumber\\
&\nu_{\text{MP},2}({\rm d}x):=\frac{1}{2\pi x}\sqrt{\big((\lambda_+-x)(x-\lambda_-)\big)_+}{\rm d}x+(1-y)_+\delta({\rm d}x), \label{19071801}
\end{align}
where $\lambda_{\pm}:=(1\pm \sqrt{y})^2$. 
Note that $m_{1N}$ and $m_{2N}$ can be regarded as the Stieltjes transforms of $F_{1N}$ and $F_{2N}$, respectively.  We further define their deterministic counterparts, i.e.,  Stieltjes transforms of $\nu_{\text{MP},1},\nu_{\text{MP},2}$,  by $m_1(z),m_2(z)$,  respectively, i.e.,
 \begin{align}
 m_1(z):=\int (x-z)^{-1}\nu_{\text{MP},1}({\rm d}x), \quad m_2(z):=\int (x-z)^{-1}\nu_{\text{MP},2}({\rm d}x). \nonumber
 \end{align}
 From the definition (\ref{19071801}), it is elementary to compute 
 \begin{align}
m_1(z)=\frac{1-y-z+\ii\sqrt{(\lambda_+-z)(z-\lambda_-)}}{2zy},  \quad
m_2(z)=\frac{y-1-z+\ii\sqrt{(\lambda_+-z)(z-\lambda_-)}}{2z}, \label{m1m2}
 \end{align}
where the square root is taken with a branch cut on the negative real axis. Equivalently, we can also characterize $m_1(z),m_2(z)$ as the unique solutions from $\mathbb{C}^+$ to $\mathbb{C}^+$ to the equations
\begin{align}
zym_1^2+[z-(1-y)]m_1+1=0, \qquad  zm_2^2+[z+(1-y)]m_2+1=0. \label{selfconeqt}
\end{align} 
Using \eqref{m1m2} and \eqref{selfconeqt}, one can easily derive the following identities  
\begin{align}
m_1=-\frac{1}{z(1+m_2)}, \quad 1+zm_1=\frac{1+zm_2}{y},\quad m_1\big((zm_2)'+1\big)=\frac{m_1'}{m_1}, \label{identitym1m2}
\end{align}
which will be used in the later discussions.

\subsection{Isotropic local law}
In this section, we state the isotropic local law from \cite{bloemendal2014isotropic, knowles2017anisotropic} together with some  consequences which will serve as the main technical inputs in the proofs. We first introduce the following domain. For a small (but fixed) $\tau>0$, we set
\begin{align}
{\mathscr{D}}\equiv {\mathscr{D}}(\tau):=\big\{z=E+\ii\eta\in\mathbb{C}:\lambda_{+}+\tau\leq E\leq\tau^{-1}, 0<\eta\leq\tau^{-1}\big\}. \label{19071810}
\end{align}
Conventionally,  for $a=1,2$,  we denote by $\mathcal{G}_a^l$ and $\mathcal{G}_a^{(l)}$  the $l$-th power of $\mathcal{G}_a$ and the $l$-th derivative of $\mathcal{G}_a$ w.r.t. $z$, respectively. 
With the above notation, we have 
\begin{thm} \label{isotropic}
Let  $\tau>0$ in (\ref{19071810}) be a small but fixed constant. Let  $\bu,\bv\in \mathbb{C}^M$ be two deterministic unit vectors. Suppose $X$ satisfies Assumption \ref{assumption}. Then,  for any given $l\in\mathbb{N}$ and $\alpha=1,2$, we have 
\begin{align}
&|\langle \bu,\mG_\alpha^{(l)}(z)\bv\rangle-m_\alpha^{(l)}(z)\langle \bu,\bv\rangle|=O_\prec\bigg(\sqrt{\frac{{\Im}m_\alpha(z)}{N\eta}}\bigg), \label{est_DG}\\
&|\langle \bu,X^*\mG_\alpha^{l}(z)\bv\rangle|=O_\prec\bigg(\sqrt{\frac{{\Im}m_\alpha(z)}{N\eta}}\bigg), \label{est_XG}\\
&|m_{\alpha N}(z)-m_\alpha(z)|=O_\prec\big(\frac{1}{N}\big) \label{est_m12N}, 
\end{align}
uniformly in $z\in{\mathscr{D}} $.
\end{thm}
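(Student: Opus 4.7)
The plan is to derive the three bounds in the order (\ref{est_m12N}) $\to$ (\ref{est_DG}) with $l=0$ $\to$ general $l$ and then (\ref{est_XG}), since the isotropic law for $\mG_\alpha$ itself is the core estimate and the other two follow by algebraic manipulations.

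First I would establish the averaged local law (\ref{est_m12N}) in the edge regime $z\in\mathscr{D}$. The starting point is the Schur complement identity for $\mG_2=(X^*X-z)^{-1}$, which gives for each diagonal entry
\begin{equation*}
\frac{1}{(\mG_2(z))_{ii}}=-z-z\,\mathbf{x}_i^*\mG_1^{(i)}(z)\mathbf{x}_i,
\end{equation*}
where $\mathbf{x}_i$ is the $i$-th column of $X^*$ and $\mG_1^{(i)}$ is the minor resolvent. Using the standard large deviation bound for quadratic forms (Hanson–Wright-type), $\mathbf{x}_i^*\mG_1^{(i)}\mathbf{x}_i=\frac{1}{N}\mathrm{Tr}\,\mG_1^{(i)}+O_\prec((N\eta)^{-1/2}\sqrt{\mathrm{Im}\,m_2})$, and the rank-one interlacing between $\mG_1$ and $\mG_1^{(i)}$, each diagonal entry $(\mG_2)_{ii}$ satisfies the self-consistent equation (\ref{selfconeqt}) with an error controlled by $\Psi(z):=\sqrt{\mathrm{Im}\,m_2(z)/(N\eta)}+(N\eta)^{-1}$. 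Averaging and invoking stability of the quadratic self-consistent equation — which near the square-root edge $\lambda_+$ degenerates like $|z-\lambda_+|^{-1/2}$ but is compensated by the improvement one gains through a fluctuation-averaging lemma — yields the $O_\prec(N^{-1})$ bound on $m_{2N}-m_2$ throughout $\mathscr{D}$. The corresponding bound for $m_{1N}-m_1$ follows from $M m_{1N}-Nm_{2N}=-(M-N)/z$.

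Next I would upgrade to the isotropic bound (\ref{est_DG}) at $l=0$ via a moment/polynomialization argument in the spirit of Knowles–Yin. Writing $F(z):=\langle\bu,(\mG_\alpha(z)-m_\alpha(z))\bv\rangle$, I would estimate $\mathbb{E}|F(z)|^{2p}$ for arbitrary $p\in\mathbb{N}$. Each $\mG_\alpha$-factor is expanded by a single Schur-complement step isolating a row/column; the off-diagonal entries produced are handled by cumulant expansion (or directly by the large-moment hypothesis on $X$) and by the already-known entrywise bound that comes as a byproduct of the averaged-law derivation above. Every unmatched pair of row/column indices produces a factor $\Psi(z)$, while matched pairs produce $\Psi(z)^2$; keeping track of the combinatorics and applying Cauchy–Schwarz in the deterministic vectors $\bu,\bv$ yields $\mathbb{E}|F|^{2p}\prec(\mathrm{Im}\,m_\alpha/(N\eta))^p$, which by Markov gives (\ref{est_DG}) for $l=0$. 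The derivative case $l\geq 1$ follows from $\mG_\alpha^{(l)}(z)=l!\,\mG_\alpha^{l+1}(z)$ combined with Cauchy's integral formula on a circle of radius $\sim\tau$ inside $\mathscr{D}$, which transfers the $l=0$ bound to all derivatives without degrading the rate (since $\mathrm{Im}\,m_\alpha$ is bounded above and $\eta$ is uniformly bounded below along the contour).

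Finally, (\ref{est_XG}) is obtained from the key identity $X^*\mG_1(z)=\mG_2(z)X^*$ (hence $X^*\mG_1^l=\mG_2^l X^*$), which reduces $\langle\bu,X^*\mG_1^l\bv\rangle$ to $\langle\bu,\mG_2^l X^*\bv\rangle$; splitting $\mG_2^l=(\mG_2-m_2)^l+\cdots$, each resulting term is either $\langle\bu,(\mathrm{polynomial\;in\;}\mG_2-m_2)X^*\bv\rangle$ (handled by (\ref{est_DG}) after noting $\|X^*\bv\|\prec 1$ from the isotropic bound applied to $\langle\bv,XX^*\bv\rangle=1+zm_1\langle\bu,\bv\rangle+\cdots$) or of the form $m_2^j\langle\bu,X^*\bv\rangle$. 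The deterministic quantity $\langle\bu,X^*\bv\rangle$ is itself $O_\prec(N^{-1/2})$ by a direct moment computation, which is well within the target. The analogous manipulation using $X\mG_2=\mG_1 X$ treats $\alpha=1$.

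The main obstacle is the first step: establishing the averaged law with the optimal $1/N$ rate uniformly up to the spectral edge. Near $\lambda_+$ the self-consistent equation has square-root singular stability, so the naive iteration only gives $\Psi(z)$ rather than $1/N$. Recovering the optimal rate requires the fluctuation-averaging lemma, which exploits cancellations among the $N$ diagonal error terms — this is the technically most delicate part and is exactly the content of \cite{bloemendal2014isotropic, knowles2017anisotropic}, which we would invoke as black boxes once the reduction above is complete.
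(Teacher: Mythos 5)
The paper does not actually prove this theorem: for $l=0$ it simply cites Theorem 3.12 of \cite{bloemendal2014isotropic} and Theorem 3.7 of \cite{knowles2017anisotropic}, and for $l\geq 1$ it uses $\mG_\alpha^{(l)}=l!\,\mG_\alpha^{l+1}$ together with a Cauchy integral over a contour at distance of order $\tau$ from the spectrum, exactly as in your last step. Your outline of the averaged law (Schur complement, quadratic-form large deviations, fluctuation averaging) and of the isotropic upgrade (polynomialization of $\mathbb{E}|F|^{2p}$) is the correct standard route, but it is precisely a summary of the proofs in those two references, which you then concede you would invoke as black boxes anyway; so for \eqref{est_m12N} and \eqref{est_DG} your plan collapses onto the paper's citation. (A minor point: your Cauchy contour of radius $\sim\tau$ necessarily leaves $\mathscr{D}$ by crossing the real axis, so "$\eta$ uniformly bounded below along the contour" is not the right justification; what saves the argument is that $E\geq\lambda_++\tau$ keeps the contour at distance $\gtrsim\tau$ from $[\lambda_-,\lambda_+]$, and the $l=0$ law extends to $\mathbb{C}^-$ by $\mG_\alpha(\bar z)=\mG_\alpha(z)^*$.)

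The genuine gap is in your derivation of \eqref{est_XG}. After writing $\langle\bu,X^*\mG_1^l\bv\rangle=\langle\bu,\mG_2^lX^*\bv\rangle$, you propose to control terms of the form $\langle\bu,(\mG_2-m_2)\cdots X^*\bv\rangle$ by applying \eqref{est_DG}. But \eqref{est_DG} holds only for \emph{deterministic} directions, whereas $X^*\bv$ is random and strongly correlated with $\mG_2$; conditioning it out is not possible, and the Cauchy--Schwarz fallback $\|(\mG_2-\bar m_2)\bu\|\cdot\|X^*\bv\|$ only yields $O_\prec(1)$, since $\|(\mG_2-m_2)\bu\|^2=\eta^{-1}\Im(\bu^*\mG_2\bu)+O_\prec(1)\sim 1$ on $\mathscr{D}$ --- this misses the target $O_\prec(N^{-1/2})$ entirely. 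The correct way to obtain \eqref{est_XG} is not to reduce it a posteriori to \eqref{est_DG}, but to run the moment estimate for the resolvent of the $(M+N)$-dimensional linearization $\begin{pmatrix} -z I_M & X\\ X^* & -I_N\end{pmatrix}$, whose off-diagonal blocks are exactly $\mG_1X$ and $X^*\mG_1$, so that \eqref{est_XG} comes out of the same induction as \eqref{est_DG}. This is how the cited theorems are formulated and proved, and it is why the paper can read \eqref{est_XG} off from them directly; alternatively one could prove \eqref{est_XG} by a separate cumulant-expansion moment bound on $\mathbb{E}|\bu^*X^*\mG_1^l\bv|^{2p}$ in the spirit of Section 5, but some such argument is needed --- the algebraic reduction you describe does not close.
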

\begin{rmk}
The case of $l=0$ is directly from the isotropic law in  Theorem 3.12 of \cite{bloemendal2014isotropic} and the anisotropic law in Theorem 3.7 of  \cite{knowles2017anisotropic}. For other $l\geq 1$, we can derive the estimate easily from the case $l=0$ by using Cauchy integral. We also remark here that the original isotropic local laws in \cite{bloemendal2014isotropic, knowles2017anisotropic} were stated in much larger domains which also include the bulk and  edge regimes of the MP law. But here we only need the result for the domain far away from the support of the MP law. 
\end{rmk}

Further,  in the following lemma, we collect some basic estimates of $m_1$ and $m_2$ which can be verified by elementary computations.
\begin{lem} \label{lem.19072501}
Recall the definition of $m_1$ and $m_2$ in \rf{m1m2}. For $a=1,2$, we have
\begin{align}
|m_{a}'(z)|\sim |m_{a}(z)|\sim 1 \quad {\Im} m_{a} \sim \frac{\eta}{\sqrt{\kappa+\eta}}, \label{estm1m2}
\end{align}
uniformly in $z\in{\mathscr{D}}$, where $\kappa=|E-\lambda_+|$.
\end{lem}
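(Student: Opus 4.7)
\textbf{Proof proposal for Lemma \ref{lem.19072501}.}

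The plan is to read all three estimates off directly from the explicit formulas \eqref{m1m2}, exploiting the fact that on $\mathscr D$ we have $E-\lambda_+\geq\tau$, so $z$ stays a definite distance from the support of the Marchenko--Pastur law and from $0$. Writing $R(z)\deq \sqrt{(\lambda_+-z)(z-\lambda_-)}$ with the principal branch on $\mathbb C\setminus(-\infty,0]$, the key structural observation is the factorization $R(z)=\ii\sqrt{z-\lambda_+}\sqrt{z-\lambda_-}$ valid on $\mathscr D$ (because for $z\in\mathscr D$ the argument of $\lambda_+-z$ lies in $(-\pi,0)$ while $z-\lambda_-$ has positive real part). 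Since $z-\lambda_-$ is bounded above and below by positive constants on $\mathscr D$, all the delicate behavior is carried by the single factor $\sqrt{z-\lambda_+}$.

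For $|m_a(z)|\sim 1$, the numerators of \eqref{m1m2} are $O(1)$ while the denominators $2zy$ and $2z$ are bounded above and below on $\mathscr D$, so $|m_a|\lesssim 1$ is immediate. For the lower bound, use the self-consistent equations \eqref{selfconeqt}: if $m_1(z)=0$ one would get $1=0$, and similarly for $m_2$; combined with continuity and compactness of $\overline{\mathscr D}$ (after adjoining the boundary $\eta=0$ on which the $m_a$ extend continuously), one concludes $|m_a|\gtrsim 1$. For $|m_a'(z)|\sim 1$, I will differentiate \eqref{selfconeqt} to obtain $m_1'=-(2zym_1+1-(1-y))m_1^2/(2zym_1+z-(1-y))$ and similarly for $m_2$; using $|m_a|\sim 1$ and the fact that the denominator $2zym_1+z-(1-y)=\pm\ii\sqrt{(\lambda_+-z)(z-\lambda_-)}$ (a direct consequence of \eqref{m1m2}) is bounded above and below on $\mathscr D$ by $|z-\lambda_+|^{1/2}\gtrsim \tau^{1/2}$, the bound follows.

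The main substantive task is $\Im m_a(z)\sim \eta/\sqrt{\kappa+\eta}$. From \eqref{m1m2},
\begin{equation*}
\Im m_1(z)=\frac{1}{|2zy|^{2}}\,\Im\!\left\{\big(1-y-z+\ii R(z)\big)\overline{2zy}\right\},
\end{equation*}
and the dominant contribution comes from $\Im(\ii R(z))/|2zy|=\Re R(z)/|2zy|$ up to lower order terms of size $O(\eta)$. Since $z-\lambda_-$ stays away from $0$, $\Re R(z)\sim \Re\sqrt{z-\lambda_+}$. Writing $z-\lambda_+=\kappa+\ii\eta$ and using the elementary identity
\begin{equation*}
\Re\sqrt{\kappa+\ii\eta}=\frac{1}{\sqrt 2}\Big(\sqrt{\kappa^2+\eta^2}+\kappa\Big)^{1/2},\qquad \Im\sqrt{\kappa+\ii\eta}=\frac{\eta}{2\Re\sqrt{\kappa+\ii\eta}},
\end{equation*}
one sees $\Re\sqrt{z-\lambda_+}\sim \sqrt{\kappa+\eta}$. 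Pulling this back through the formula for $m_1$ (and tracking the $O(\eta)$ contribution from $\Im(1-y-z)\cdot \overline{2zy}$, which has the same order of magnitude $\eta$), one obtains the claimed two-sided bound; the analysis for $m_2$ is identical.

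The only real pitfall is the branch of the square root and the correct matching of signs in the factorization $R(z)=\ii\sqrt{z-\lambda_+}\sqrt{z-\lambda_-}$ on $\mathscr D$, together with verifying that $\Im m_a$ is indeed positive (as it must be, since $m_a$ is a Stieltjes transform of a positive measure and $\eta>0$). Once this is pinned down, the rest is bookkeeping with the explicit formulas.
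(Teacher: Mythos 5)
The paper itself offers no proof of this lemma (it is dismissed as ``elementary computations''), so I am judging your argument on its own terms. The parts concerning $|m_a|\sim1$ and $|m_a'|\sim1$ are structurally sound (though your displayed formula for $m_1'$ is garbled: differentiating \eqref{selfconeqt} gives $m_1'=-\frac{ym_1^2+m_1}{2zym_1+z-(1-y)}$, and for the lower bound you also need to check that the numerator, i.e.\ $|1+ym_1|$, is bounded below on $\mathscr D$ — true, but it is an extra step you did not mention). The genuine error is in the $\Im m_a$ estimate, and it is not a sign issue but an order-of-magnitude one: your claim $\Re R(z)\sim\Re\sqrt{z-\lambda_+}\sim\sqrt{\kappa+\eta}$ is false, and if it were true it would yield $\Im m_a\sim\sqrt{\kappa+\eta}\sim1$ on $\mathscr D$, contradicting the lemma (indeed $\Im m_a\to0$ as $\eta\to0$ for $E>\lambda_+$, since $m_a$ extends real-analytically across $(\lambda_+,\infty)$). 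The factor $\ii$ in the factorization $R=\pm\ii\sqrt{z-\lambda_+}\sqrt{z-\lambda_-}$ swaps real and imaginary parts: since $\sqrt{z-\lambda_-}$ is, to leading order, real and bounded, one has $\Re R \asymp \Im\!\big(\sqrt{z-\lambda_+}\sqrt{z-\lambda_-}\big)$, and by your own identity $\Im\sqrt{\kappa+\ii\eta}=\eta/(2\Re\sqrt{\kappa+\ii\eta})\sim\eta/\sqrt{\kappa+\eta}$ this gives $\Re R\sim\eta/\sqrt{\kappa+\eta}$, which is the correct size of the dominant term $E\,\Re R/|2zy|^2$ in $\Im m_1=\frac{2y}{|2zy|^2}\big(E\,\Re R+\eta\,\Im R-(1-y)\eta\big)$. (Your sign in the factorization is also the wrong one for the branch that makes $m_1(z)\sim-1/z$ at infinity: on $\mathscr D$ one has $R\approx-\ii\sqrt{(E-\lambda_+)(E-\lambda_-)}$, so $\Im R<0$.) Even after correcting this you must rule out cancellation between $E\,\Re R>0$ and the two negative $O(\eta)$ terms before claiming a two-sided bound.

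A cleaner route for the imaginary part, which bypasses all branch bookkeeping, is the integral representation: $\Im m_a(z)=\eta\int|x-z|^{-2}\,\nu_{\mathrm{MP},a}(\mathrm dx)$, and on $\mathscr D$ every $x$ in $\operatorname{supp}\nu_{\mathrm{MP},a}\subset\{0\}\cup[\lambda_-,\lambda_+]$ satisfies $c\le|x-z|\le C$, whence $\Im m_a\sim\eta\sim\eta/\sqrt{\kappa+\eta}$ (the last equivalence because $\kappa\ge\tau$ on $\mathscr D$). I recommend replacing your computation by this, or by the corrected estimate of $\Re R$ above.
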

\begin{rmk} \label{boundrmk}
Following from Theorem \ref{isotropic} and Lemma \ref{lem.19072501}, we can easily have the boundedness of $\bu^*\mG_1^\alpha\bv $, $\bu^*X\mG_1^\alpha\bv$ and $\bu^*X\mG_1^\alpha X^*\bv$ for any positive integer $\alpha$  and deterministic unit vectors $\mathbf{u}$ and $\mathbf{v}$ of appropriate dimensions. Actually, in our regime, $z\in \mathscr{D}$, the boundedness of all these quantities follows more directly from the rigidity of the largest eigenvalue of $H$ (\cf (\ref{190726100})), which guarantees that $\|\mathcal{G}_1\|_{\text{op}}\leq C$ with high probability. 
\end{rmk}
Using the isotropic local law, one can also get the following result, which gives the location of the outlier and the  extremal non-outlier.
\begin{lem}\label{locationeig}
{\rm(}{\it Theorem 2.3} of \cite{bloemendal2016principal}{\rm)} Under Assumption \ref{assumption} and \rf{asd}, we have for $i\in \lb 1, r_0\rb$
\begin{align*}
|\mu_i-\theta(d_i)|\prec N^{-\frac12},\qquad |\mu_{r_0+1}-\lambda_+|\prec N^{-2/3},
\end{align*}
where
\begin{align}
\theta(z):=1+z+y+yz^{-1}, \qquad \text{for   } z\in \mathbb{C},\quad \Re z>\sqrt{y}. \label{def. of theta}
\end{align}
\end{lem}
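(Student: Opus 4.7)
The plan is to derive a rank-$r$ secular equation for the outlier eigenvalues of $Q$, localize its roots via the isotropic local law (Theorem \ref{isotropic}), and obtain the non-outlier bound via Cauchy interlacing combined with edge rigidity of $H=XX^*$.

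First I would use the fact that the nonzero spectra of $Q=\Sigma^{1/2}XX^*\Sigma^{1/2}$ and of $X^*\Sigma X=\mathcal H+X^*SX$ agree, where $\mathcal H=X^*X$. Writing $S=VDV^*$ with $V=(\bv_1,\dots,\bv_r)$ and $D=\diag(d_1,\dots,d_r)$, the perturbation $X^*SX$ has rank $r$, so by the matrix determinant lemma any $\mu\notin\mathrm{spec}(\mathcal H)$ is an eigenvalue of $Q$ iff $\det(D^{-1}+V^*X\mG_2(\mu)X^*V)=0$. Combining the push-through identity $X\mG_2(\mu)X^*=I_M+\mu\mG_1(\mu)$ with $V^*V=I_r$ reduces this to the secular equation
\begin{align*}
\det\!\Big(D^{-1}+I_r+\mu\,V^*\mG_1(\mu)V\Big)=0. \tag{$\star$}
\end{align*}

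Next, for real $\mu\in\mathscr D$ the isotropic local law gives $V^*\mG_1(\mu)V=m_1(\mu)I_r+O_\prec(N^{-1/2})$, so any eigenvalue $\mu_i\in\mathscr D$ of $Q$ must satisfy $1+d_j^{-1}+\mu_i m_1(\mu_i)=O_\prec(N^{-1/2})$ for some $j$. A direct substitution into \eqref{selfconeqt} verifies that $z=\theta(d)$ is the unique root in $(\lambda_+,\infty)$ of $1+d^{-1}+zm_1(z)=0$, and $\partial_z[zm_1(z)]$ is nonvanishing there (equivalent to the strict monotonicity $\theta'(d)>0$ for $d>\sqrt y$). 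An implicit function argument then upgrades the error to $|\mu_i-\theta(d_j)|\prec N^{-1/2}$. To match each $\theta(d_i)$ with exactly one $\mu_i$, I would run a Rouch\'e/contour count on $(\star)$: by Assumption \ref{supercritical} the disks $\{|\mu-\theta(d_i)|\leq N^{-1/2+\varepsilon}\}$ are pairwise disjoint and disjoint from $[0,\lambda_+]$, the left-hand side of $(\star)$ has precisely one zero in each (by the nondegenerate derivative), and a continuity-in-spikes deformation $D\mapsto tD$, $t\in[0,1]$, identifies that zero with a unique eigenvalue of $Q$.

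Finally, $|\mu_{r_0+1}-\lambda_+|\prec N^{-2/3}$ follows from Cauchy interlacing applied to the rank-$r$ multiplicative perturbation $Q=\Sigma^{1/2}H\Sigma^{1/2}$, which sandwiches $\mu_{r_0+1}$ between extremal eigenvalues of $H=XX^*$, together with the standard edge rigidity of the unspiked Wishart matrix, which places $\lambda_k(H)$ within $O_\prec(N^{-2/3})$ of $\lambda_+$ for any fixed $k$. The main obstacle is the counting step in the middle paragraph: the local law on its own only establishes that if $\mu_i\in\mathscr D$ then it lies near some $\theta(d_j)$, but it does not guarantee that all $r_0$ of the $\theta(d_j)$'s are actually realized or that eigenvalues do not coalesce; the Rouch\'e argument together with the separation encoded in Assumption \ref{supercritical} is what resolves this.
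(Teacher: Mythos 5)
The paper does not prove this lemma: it is quoted from Theorem 2.3 of \cite{bloemendal2016principal}, so there is no in-paper proof to match. Your route for the first estimate is nonetheless the standard one used there (and echoed in the paper's own proof of Lemma \ref{representation}): reduce to the secular equation $\det\big(D^{-1}+I_r+\mu V^*\mG_1(\mu)V\big)=0$, replace $V^*\mG_1 V$ by $m_1 I_r+O_\prec(N^{-1/2})$ via the isotropic law, use that $\theta(d)$ is the unique zero of $1+d^{-1}+zm_1(z)$ on $(\lambda_+,\infty)$ with nondegenerate $z$-derivative, and count zeros. This part is sound up to routine points: one must extend the local law to real $z$ (standard, since $\|\mG_1\|_{\mathrm{op}}\le C$ with high probability on $\{E\ge \lambda_++\tau\}$ by \eqref{190726100}), and the homotopy $D\mapsto tD$ is awkward because as $t\downarrow 0$ the spikes cross criticality and the disks around $\theta(td_i)$ collide with the edge; the cleaner identification is that the zeros of the secular determinant away from $\mathrm{spec}(H)$ \emph{are}, with multiplicity, the eigenvalues of $Q$ there, so Rouch\'e on all of $\{E\ge\lambda_++\tau\}$ already performs the matching without any deformation in $D$.

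The genuine gap is in your last paragraph. Since $Q$ has the same spectrum as $H+H^{1/2}SH^{1/2}$, a rank-$r$ nonnegative perturbation of $H$, interlacing gives $\lambda_k(H)\le\mu_k$ for all $k$ and $\mu_k\le\lambda_{k-r}(H)$ only for $k>r$. Hence the upper bound $\mu_{r_0+1}\le\lambda_{r_0+1-r}(H)$ exists only when $r_0\ge r$. If $r_0<r$, the remaining spikes $d_{r_0+1},\dots,d_r$ produce eigenvalues $\mu_{r_0+1},\dots,\mu_r$ for which interlacing yields no upper bound whatsoever, and showing that these stick to $\lambda_+$ at scale $N^{-2/3}$ is precisely the hard part of Theorem 2.3 of \cite{bloemendal2016principal}: it requires the counting/permanence argument with the local law at spectral scales $\eta\sim N^{-2/3+\epsilon}$ near the edge, where the isotropic error $\sqrt{\Im m_1/(N\eta)}$ is only $O_\prec(N^{-1/3})$, not the $O_\prec(N^{-1/2})$ bound available on the fixed domain $\mathscr D$. (Note also that, as literally stated with no hypothesis on $d_j$ for $j>r_0$, the second bound can fail --- e.g.\ if $d_{r_0+1}$ is itself supercritical and well separated --- so the quoted theorem implicitly assumes $r_0$ exhausts the supercritical spikes.) As written, your argument establishes the second estimate only in the case $r_0=r$.
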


Further, for the largest eigenvalue of $H$, denoted by $\lambda_1(H)$, we have the rigidity estimate
\begin{align}
|\lambda_1(H)-\lambda_+|\prec N^{-\frac23}.  \label{190726100}
\end{align}
We refer to Theorem 3.1 of \cite{PY14},  for instance.

\subsection{Auxiliary lemmas} 
The following \emph{cumulant expansion formula} plays a central role in our computation, whose proof can be found in \cite[Proposition 3.1]{LP09} or \cite[Section II]{KKP96}, for instance.

\begin{lem}\label{cumulantexpansion}
(Cumulant expansion formula) For a fixed $\ell\in \mathbb{N}$,  let $f\in C^{\ell+1}(\mathbb{R})$. Supposed $\xi$ is a centered random variable with finite moments  to order $\ell+2$.  Recall the notation $\kappa_k(\xi)$ for  the k-th cumulant of $\xi$.  Then we have  
\begin{align}
\mbE(\xi f(\xi))=\sum_{k=1}^{\ell}\frac{\kappa_{k+1}(\xi)}{k!}\mbE(f^{(k)}(\xi))+\mbE(r_{\ell}(\xi f(\xi))),
\end{align}
where the error term $r_{\ell}(\xi f(\xi))$ satisfies
\begin{align}
|\mbE(r_{\ell}(\xi f(\xi)))|\leq C_\ell\mbE(|\xi|^{\ell+2}){\rm{sup}}_{|t|\leq s}|f^{\ell+1}(t)|+C_\ell\mbE(|\xi|^{\ell+2}\mathbbm{1}(|\xi|>s)){\rm{sup}}_{t\in \mathbb{R}}|f^{\ell+1}(t)|  \label{remaining}
\end{align}
for any  $s>0$ and $C_\ell$ satisfied $C_\ell\leq (C\ell)^{\ell}/\ell!$ for some constant $C>0$.
\end{lem}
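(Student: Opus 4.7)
The plan is to derive the formula via Taylor expansion combined with the moment--cumulant relation, using truncation to handle the non-integrability issues that a purely Fourier-based proof would encounter.

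First, I would verify the identity at the level of formal generating functions. Writing $\phi(t) := \mathbb{E}(e^{t\xi})$ and $K(t) := \log \phi(t)$, one has the formal expansion $K(t) = \sum_{k\geq 1} \kappa_k(\xi) t^k/k!$; under the assumption of $\ell+2$ finite moments this is valid as a Taylor polynomial of order $\ell+1$ with controlled remainder near $0$. Differentiating $\phi = e^{K}$ gives
\begin{equation*}
\mathbb{E}(\xi e^{t\xi}) = \phi'(t) = \phi(t)\, K'(t) = \sum_{k\geq 0} \frac{\kappa_{k+1}(\xi)}{k!}\, t^k\, \phi(t) = \sum_{k\geq 0} \frac{\kappa_{k+1}(\xi)}{k!}\, \mathbb{E}\bigl(f_t^{(k)}(\xi)\bigr),
\end{equation*}
with $f_t(x) := e^{tx}$. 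This confirms the formula for the exponential test functions and reveals the structure of the remainder.

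To reach a general $f \in C^{\ell+1}(\mathbb{R})$, I would avoid Fourier inversion---which would require an integrability hypothesis not present---and instead apply Taylor's theorem with integral remainder around $0$ to $f$ itself on the left-hand side, and to each derivative $f^{(k)}$ on the right-hand side. Substituting these expansions into the claimed identity, the polynomial contributions on the two sides must collapse by the moment--cumulant relation
\begin{equation*}
\mathbb{E}(\xi^{m}) = \sum_{\pi \in \mathcal{P}(m)} \prod_{B \in \pi} \kappa_{|B|}(\xi),
\end{equation*}
where $\mathcal{P}(m)$ denotes set partitions of $\{1,\ldots,m\}$, with the centering hypothesis $\mathbb{E}(\xi) = \kappa_1(\xi) = 0$ removing the $k=0$ term. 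What remains is a combination of integrals of $f^{(\ell+1)}(u)$ weighted against power functions of $\xi$, which one identifies as $\mathbb{E}(r_\ell(\xi f(\xi)))$.

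For the bound \eqref{remaining} I would split $\mathbb{E}(r_\ell) = \mathbb{E}(r_\ell;\, |\xi|\leq s) + \mathbb{E}(r_\ell;\, |\xi|>s)$. On the set $\{|\xi|\leq s\}$, the Taylor remainder is pointwise controlled by $|\xi|^{\ell+1}\, \sup_{|u|\leq s}|f^{(\ell+1)}(u)|/(\ell+1)!$, and the extra factor of $\xi$ from the left-hand side supplies the final power in $|\xi|^{\ell+2}$. On $\{|\xi| > s\}$ the same estimate applies with the global supremum of $|f^{(\ell+1)}|$ and the truncated moment $\mathbb{E}(|\xi|^{\ell+2}\mathbbm{1}(|\xi|>s))$. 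The combinatorial prefactor $C_\ell \leq (C\ell)^\ell/\ell!$ then tracks the number of set partitions contributing to the moment--cumulant identity. The main obstacle is the algebraic bookkeeping in the middle step: one must identify exactly which partition contributions in the moment--cumulant relation cancel the polynomial parts of the two Taylor expansions, so that the residual is expressed purely through $f^{(\ell+1)}$ and moments of $\xi$ of order at most $\ell+2$. The generating-function argument yields a clean formal picture, but transferring it rigorously under only $C^{\ell+1}$ regularity of $f$ while producing a remainder with the stated dichotomy between a local and a global supremum of $f^{(\ell+1)}$ requires choosing the expansion point carefully and commuting the order of integration and expectation at precisely the right moment.
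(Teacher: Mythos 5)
The paper does not actually prove this lemma: it is quoted with a pointer to \cite[Proposition 3.1]{LP09} and \cite[Section II]{KKP96}, so there is no internal proof to compare against. Your proposal follows essentially the standard route of those references — Taylor-expand $f$ on the left and each $f^{(k)}$ on the right around $0$ to the order that leaves only $f^{(\ell+1)}$ in the remainders, and check that the polynomial parts cancel — and this route does work. Two points need tightening. First, the generating function $\phi(t)=\mathbb{E}(e^{t\xi})$ need not exist under only $\ell+2$ finite moments; either use the characteristic function, which is $C^{\ell+2}$ near $0$, or treat $\phi'=\phi K'$ purely formally. What the cancellation actually requires is only the recursion $\mathbb{E}(\xi^{m+1})=\sum_{k=0}^{m}\binom{m}{k}\kappa_{k+1}(\xi)\,\mathbb{E}(\xi^{m-k})$ for $m\le\ell$ (with $\kappa_1=0$), i.e.\ the coefficient identity of $\phi'=\phi K'$; invoking the full set-partition formula is more bookkeeping than needed and obscures the cancellation. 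Second, your treatment of the error only accounts for the Taylor remainder of $f$ on the left-hand side. Each expanded $f^{(k)}$ on the right contributes a further remainder $\frac{\kappa_{k+1}(\xi)}{k!(\ell-k)!}\mathbb{E}\int_0^\xi(\xi-u)^{\ell-k}f^{(\ell+1)}(u)\,\dd u$, carrying a cumulant prefactor. To absorb these into the stated two-term bound one needs $|\kappa_{k+1}(\xi)|\,\mathbb{E}\big(|\xi|^{\ell+1-k}\mathbbm{1}(|\xi|>s)\big)\leq C_k\,\mathbb{E}\big(|\xi|^{\ell+2}\mathbbm{1}(|\xi|>s)\big)$ together with its untruncated analogue; these follow from $|\kappa_{k+1}(\xi)|\leq C_k\mathbb{E}|\xi|^{k+1}$ and a splitting of the product of expectations according to whether $|x|\leq|y|$, and it is this step (not the partition count) that produces the constant $C_\ell\leq(C\ell)^{\ell}/\ell!$. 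With these repairs your argument is a complete proof of the lemma.
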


Next we collect some basic identities for the Green functions in (\ref{def:green}) without proof.

\begin{lem} \label{lemrelation}
For any integer $l\geq1$, we have
\begin{align}
&\mG_1^{l}=\frac{1}{(l-1)!}\frac{\partial^{l-1}\mG_1}{\partial z^{l-1}} = \frac{1}{(l-1)!} \mG_1^{(l-1)},\label{eq:basicG}\\   
&\mG_1^lXX^*=\mG_1^{l-1}+z\mG_1^{l}, \quad X^*\mG_1^lX=\mG_2^lX^*X=\mG_2^{l-1}+z\mG_2^{l}. \label{relationXG}
\end{align}
\end{lem}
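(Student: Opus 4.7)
The plan is to verify these as elementary algebraic identities, with no probabilistic input needed; the content is purely operator-theoretic on the deterministic matrix $XX^*$ (and its companion $X^*X$).

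For the identity \eqref{eq:basicG}, I would first establish the base case $\partial_z \mG_1 = \mG_1^2$. Starting from $\mG_1(z)(XX^*-z)=I$ and differentiating both sides in $z$ yields $(\partial_z \mG_1)(XX^*-z) - \mG_1 = 0$, i.e.\ $\partial_z \mG_1 = \mG_1 \cdot \mG_1 = \mG_1^2$. An induction on $l$ then gives $\partial_z^{l-1}\mG_1 = (l-1)!\,\mG_1^l$: assuming $\partial_z^{l-2}\mG_1 = (l-2)!\,\mG_1^{l-1}$, differentiate once more, using $\partial_z \mG_1^{l-1} = \sum_{k=0}^{l-2}\mG_1^k (\partial_z \mG_1)\mG_1^{l-2-k} = (l-1)\mG_1^l$ since all factors commute.

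For \eqref{relationXG}, the first identity $\mG_1^l XX^* = \mG_1^{l-1} + z \mG_1^l$ follows from the defining relation $\mG_1(XX^*-z)=I$, which rearranges to $\mG_1 XX^* = I + z\mG_1$; multiplying on the left by $\mG_1^{l-1}$ gives the claim. For the intertwining identity $X^*\mG_1^l X = \mG_2^l X^*X$, the key observation is that $X^*(XX^*-z) = (X^*X-z)X^*$ as matrices, so post-multiplying by $\mG_1$ and pre-multiplying by $\mG_2$ yields $\mG_2 X^* = X^* \mG_1$. Iterating, $X^* \mG_1^l = \mG_2^l X^*$ for all $l\geq 1$, whence $X^*\mG_1^l X = \mG_2^l X^*X$. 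The final equality $\mG_2^l X^*X = \mG_2^{l-1} + z\mG_2^l$ is then proved by the same argument as the first identity, using $\mG_2(X^*X-z)=I$.

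None of these steps presents a genuine obstacle; the lemma is a bookkeeping collection of elementary resolvent manipulations. The only point worth flagging is that one must use commutativity of $\mG_1$ with itself (and of $\mG_2$ with itself) freely, which is automatic since each is a function of a single matrix. Once the two base identities $\partial_z \mG_1 = \mG_1^2$, $\mG_1 XX^* = I + z\mG_1$, and the intertwining $X^*\mG_1 = \mG_2 X^*$ are in hand, all displayed formulas drop out by a one-line induction or left/right multiplication. This is presumably why the authors state the lemma \emph{without proof}.
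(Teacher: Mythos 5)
Your proof is correct and consists of exactly the standard resolvent manipulations; the paper itself states this lemma without proof, regarding it as elementary, and your derivation (differentiating $\mG_1(XX^*-z)=I$, left-multiplying the identity $\mG_1XX^*=I+z\mG_1$, and the intertwining $X^*\mG_1=\mG_2X^*$) is the intended justification.
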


 Further, for $a \in \lb 1, M\rb$ and $b\in \lb 1, N\rb$, we denote by  $E_{ab}$ the $M\times N$ matrix with entires $(E_{ab})_{cd}=\delta_{ac}\delta_{bd}$. Let
\begin{align}
\mathscr{P}_{0}^{ab}=E_{ab}(E_{ab})^*,\quad \mathscr{P}_{1}^{ab}=E_{ab}X^*,\quad \mathscr{P}_{2}^{ab}=X(E_{ab})^*. \label{P012}
\end{align}
For any integer $l\geq1$, it is also elementary to compute that 
\begin{align}
&\frac{\partial \mG_1^{l}}{\partial x_{ab}}=-\sum_{\alpha=1}^2\sum_{\begin{subarray}{c}l_1,l_2\geq 1\\ l_1+l_2=l+1 \end{subarray}}\mG_1^{l_1}\mathscr{P}_{\alpha}^{ab}\mG_1^{l_2}.\label{derivative}
\end{align}
Repeatedly applying the  identity \rf{derivative}, we can  get the formulas for higher order derivatives of $\mG_1^l$ w.r.t.  $x_{ab}$. Moreover, by (\ref{derivative}) and  the product rule, we can easily deduce the derivatives of $X^*\mG_1^{l}$ w.r.t. $x_{ab}$.   For the convenience of the reader, we collect more basic formulas of the  derivatives of Green functions in Appendix \ref{s.derivative of G}.

\section{Green function representation}\label{secgfr}
In this section, we  express $\mu_i$ and $ |\langle \mb{w}, \mb{\xi}_i\rangle|^2$ in terms of  the Green function $\mG_1(z)$ in \eqref{def:green}. This representation will allow us to work with the Green function instead of the eigenvalue and eigenvector statistics.  We also remark here that similar derivation of the Green function representation has appeared in previous work such as \cite{KY13, bloemendal2016principal}. But here for eigenvectors, we need to do it up to a higher order precision, in order to capture all contributing terms for the fluctuation.  

We start with a few more notations. We define the centered Green function by
\begin{align}
\Xi(z) := \mG_1(z) - m_1(z) I, \label{19071905}
\end{align} 
and introduce its quadratic forms
\begin{align}
\chi_{ij} (z) = \bv_i^* \Xi(z) \bv_j,\qquad \chi_{\bu j}(z) = \bu^* \Xi(z) \bv_j,\qquad i,j\in \lb 1, r\rb, \label{19071920}
\end{align}
where $\mathbf{u}$ is defined in (\ref{19071820}). For brevity, we further set
 \begin{align}\label{def:wtw}
\widetilde{\mb{w}}:= \Sigma^{-\frac{1}{2}}\mb{w}=\sum_{j=1}^r \wt w_j\bv_j+\bu \quad\text{with}\quad \wt w_j:= \frac{\langle \mb{w},\bv_j \rangle}{\sqrt{1+d_j}}.
 \end{align}
Also, for $d>0$, we define the following functions
\begin{align}
&f(d):=\frac{1}{d}(d+1)(d^2-y), \qquad g(d):=\frac{1}{d} (d+1)(d+y)(d^2-y), \label{19071921}
\end{align}
and for $i\in \lb 1,r\rb$, we set for $d\neq d_i$, 
\begin{align}
&\nu_i (d) := \frac{d_i (d+1)}{d_i -d}. \label{19071945}
\end{align}

With the above notations, we have the following lemma. 
\begin{lem} \label{representation}
Under Assumptions \ref{assumption} , \ref{supercritical}, and the setting (\ref{19071802}),  for $i \in \lb 1, r_0\rb$, we have
\begin{align}
\mu_i&=\theta(d_i)-(d_i^2-y)\theta(d_i)\chi_{ii}(\theta(d_i))+O_\prec\Big(\frac 1N\Big), \label{decompeigenv}
\end{align}
and
\begin{align}
 |\langle \mb{w}, \mb{\xi}_i\rangle|^2&= \frac{d_i^2-y}{d_i (d_i+y)}  |\langle \mb{w}, \bv_i\rangle|^2 - 2 d_i (d_i+1)^2 \wt w_i^2 \chi_{ii}(\theta(d_i)) -f(d_i)^2 \wt w_i^2 \chi_{ii}'(\theta(d_i)) \nonumber\\
 &\qquad - 2f(d_i) \wt w_i\Big(\sum_{j\neq i} \nu_i(d_j)  \wt w_j \chi_{ij}(\theta(d_i))+\chi_{\bu i}(\theta(d_i))\Big)\nonumber\\
 & \qquad+ g(d_i)\Big(\sum_{j\neq i} \nu_i(d_j)  \wt w_j \chi_{ij}(\theta(d_i))+\chi_{\bu i}(\theta(d_i))\Big)^2 + O_\prec\left( \frac{\wt w_i}{N} \right)+O_\prec(N^{-\frac32}). \label{rankrdecomp}
 \end{align}
\end{lem}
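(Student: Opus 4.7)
The plan is to reduce both spectral statistics of $Q = \Sigma^{1/2} H \Sigma^{1/2}$ to Green function quantities of $H = XX^*$ tested against the directions $\mathbf{v}_1,\ldots,\mathbf{v}_r,\mathbf{u}$ through a Woodbury identity, and then to extract the desired representations from a characteristic equation and a residue calculus. First I would verify
\begin{equation*}
G_Q(z) := (Q-z)^{-1} = \Sigma^{-\frac12}\bigl(H - z\Sigma^{-1}\bigr)^{-1}\Sigma^{-\frac12},
\end{equation*}
and, writing $\Sigma^{-1} = I - VD(I+D)^{-1}V^*$ with $V=(\mathbf{v}_1,\ldots,\mathbf{v}_r)$ and $D=\mathrm{diag}(d_i)$, apply Woodbury to get $(H-z\Sigma^{-1})^{-1} = \mathcal{G}_1 - \mathcal{G}_1 V A(z)^{-1} V^* \mathcal{G}_1$ with $A(z) = z^{-1}D^{-1}(I+D) + V^*\mathcal{G}_1(z)V$. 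Substituting $V^*\mathcal{G}_1(z)V = m_1(z)I_r + \chi(z)$, the rescaled matrix $B(z):=zA(z)$ becomes $\mathrm{diag}_j\bigl(1+d_j^{-1}+zm_1(z)\bigr) + z\chi(z)$.

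For (\ref{decompeigenv}): the outlier $\mu_i$ is a zero of $\det B(z)$. The function $\psi(z):=1+d_i^{-1}+zm_1(z)$ satisfies $\psi(\theta(d_i))=0$ and, using the quadratic equation \eqref{selfconeqt}, $\psi'(\theta(d_i))=1/(d_i^2-y)$. Under Assumption \ref{supercritical} the other diagonal entries of the leading part of $B(\theta(d_i))$ are bounded away from zero, so a Schur-complement identity applied to $\det B(\mu_i)=0$ gives
\begin{equation*}
B_{ii}(\mu_i) = B_{i,\bar i}(\mu_i)\, B_{\bar i,\bar i}(\mu_i)^{-1}\, B_{\bar i,i}(\mu_i) = O_\prec(N^{-1}),
\end{equation*}
where the bound uses $\chi_{jk}=O_\prec(N^{-1/2})$ from Theorem \ref{isotropic}. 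A first-order Taylor expansion of $\psi(\mu_i)+\mu_i\chi_{ii}(\mu_i)$ around $\theta(d_i)$, combined with $\mu_i-\theta(d_i)=O_\prec(N^{-1/2})$ from Lemma \ref{locationeig}, then yields (\ref{decompeigenv}).

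For (\ref{rankrdecomp}): I would use $|\langle\mathbf{w},\boldsymbol{\xi}_i\rangle|^2 = -\mathrm{Res}_{z=\mu_i}\,\mathbf{w}^*G_Q(z)\mathbf{w}$. The diagonal piece $\widetilde{\mathbf{w}}^*\mathcal{G}_1\widetilde{\mathbf{w}}$ of $\mathbf{w}^*G_Q(z)\mathbf{w}$ is analytic across $\Gamma_i$, so writing $\mathbf{p}(z):=V^*\mathcal{G}_1(z)\widetilde{\mathbf{w}}$ and letting $\mathbf{q}_i$ be the null vector of $A(\mu_i)$, the residue equals $(\mathbf{q}_i^T\mathbf{p}(\mu_i))^2/\bigl(\mathbf{q}_i^T A'(\mu_i)\mathbf{q}_i\bigr)$. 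Solving $A(\mu_i)\mathbf{q}_i=0$ perturbatively yields $q_i^{(j)} = -\chi_{ji}(\mu_i)/A_{jj}(\mu_i) + O_\prec(N^{-1})$ for $j\neq i$, so that
\begin{equation*}
\mathbf{q}_i^T\mathbf{p}(\mu_i) = \tilde w_i m_1(\mu_i)+\tilde w_i\chi_{ii}(\mu_i)+\chi_{\mathbf{u} i}(\mu_i) + \sum_{j\neq i}\tilde w_j\chi_{ij}(\mu_i)\bigl[1-m_1(\mu_i)/A_{jj}(\mu_i)\bigr] + O_\prec(N^{-1}).
\end{equation*}
The key algebraic simplification is $1-m_1(\mu_i)/A_{jj}^{(0)}(\mu_i)=(1+d_j)/(\mu_i d_j m_1(\mu_i)+1+d_j)$, which at $z=\theta(d_i)$ using $\theta(d_i)m_1(\theta(d_i))=-(d_i+1)/d_i$ reduces exactly to $\nu_i(d_j)$. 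Substituting the characteristic-equation identity $m_1(\mu_i)+\chi_{ii}(\mu_i) = -(1+d_i)/(\mu_i d_i) + O_\prec(N^{-1})$ into the $\tilde w_i$-proportional part and Taylor-expanding $1/\mu_i$ around $\theta(d_i)$ with $\mu_i-\theta(d_i) = -(d_i^2-y)\theta(d_i)\chi_{ii}(\theta(d_i))+O_\prec(N^{-1})$ produces the clean form $\mathbf{q}_i^T\mathbf{p}(\mu_i) = -\tilde w_i(1+(d_i^2-y)\chi_{ii}(\theta(d_i)))/(d_i+y) + \eta + O_\prec(\tilde w_i/N)+O_\prec(N^{-1})$, where $\eta = \sum_{j\neq i}\nu_i(d_j)\tilde w_j\chi_{ij}(\theta(d_i))+\chi_{\mathbf{u}i}(\theta(d_i))$.

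For the denominator, $\mathbf{q}_i^T A'(\mu_i)\mathbf{q}_i = A_{ii}'(\mu_i)+O_\prec(N^{-1})$ since the off-diagonal contribution is quadratic in $\chi$. Writing $A_{ii}(z)=\psi(z)/z+\chi_{ii}(z)$ and using $\psi(\theta(d_i))=0$, $\psi'(\theta(d_i))=1/(d_i^2-y)$, $\psi''(\theta(d_i))=-2d_i^3/(d_i^2-y)^3$ (the last obtained by differentiating the parametric identity $\theta(d)m_1(\theta(d))=-(d+1)/d$ twice in $d$), one Taylor-expands $A_{ii}'(\mu_i)$ around $\theta(d_i)$ and inverts to get $1/A_{ii}'(\mu_i) = \theta(d_i)(d_i^2-y) - \theta(d_i)^2(d_i^2-y)^2\chi_{ii}'(\theta(d_i))$ plus a $\chi_{ii}(\theta(d_i))$-term from the second-order correction. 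Multiplying the two expansions and collecting terms of order $\tilde w_i^2$, $\tilde w_i$ and $1$ (exploiting $|\tilde w_i|\leq1$ to absorb $O_\prec(\tilde w_i^2/N)$ into $O_\prec(\tilde w_i/N)$) yields (\ref{rankrdecomp}), with the coefficient of $\tilde w_i^2\chi_{ii}'(\theta(d_i))$ equal to $-\theta(d_i)^2(d_i^2-y)^2/(d_i+y)^2 = -f(d_i)^2$ directly. The principal obstacle will be the delicate cancellation producing the $\chi_{ii}$ coefficient $-2d_i(d_i+1)^2$: both the numerator (via the $(1+d_i)/(\mu_i d_i)$ expansion) and the denominator (via $A_{ii}''(\theta(d_i))$ and $\psi''(\theta(d_i))$) contribute, and identifying their sum with $-2d_i(d_i+1)^2$ requires the identity $2\theta(d_i)(d_i^2-y)^2+\psi''(\theta(d_i))\theta(d_i)^2(d_i^2-y)^3/[(d_i^2-y)\theta(d_i)]+\ldots = -2d_i(d_i+1)^2(d_i+y)^2$, which is verified by explicit computation with the closed forms for $\psi''$ and $\theta$.
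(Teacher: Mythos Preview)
Your proof of (\ref{decompeigenv}) via the Schur complement of $B(\mu_i)$ is essentially the paper's argument in different clothing: the paper compares the eigenvalues of $\mathcal{A}(x):=D^{-1}+xV^*\mG_1(x)V - xm_1(x)I$ to those of its diagonal $\widetilde{\mathcal{A}}(x)$ by second-order eigenvalue perturbation, which is exactly your Schur complement bound $B_{i,\bar i}B_{\bar i,\bar i}^{-1}B_{\bar i,i}=O_\prec(N^{-1})$.

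Your approach to (\ref{rankrdecomp}), however, is genuinely different. The paper does not compute the residue at the \emph{random} point $\mu_i$ via the null-vector formula $(\mathbf{q}_i^T\mathbf{p}(\mu_i))^2/(\mathbf{q}_i^T A'(\mu_i)\mathbf{q}_i)$. Instead it keeps the contour $\theta(\Gamma_i)$, expands the inverse $(D^{-1}+zV^*\mG_1(z)V)^{-1}$ by a second-order resolvent expansion around the deterministic $L(z)=(D^{-1}+zm_1(z))^{-1}$, and then evaluates each resulting contour integral by residues at the \emph{deterministic} point $z=d_i$ after the change of variable $z\mapsto\theta(z)$. This automatically organises the answer by powers of $\Xi$ into three pieces $S_1,S_2,S_3$, and in particular the coefficient $-2d_i(d_i+1)^2$ of $\wt w_i^2\chi_{ii}$ drops out of a single residue calculation in $S_2$ with no cancellation needed. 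Your route is conceptually more direct (one residue, no contour) but pays for it with the Taylor shifts $\mu_i\to\theta(d_i)$ in both numerator and denominator; as you note, the $\chi_{ii}$ coefficient then emerges only after combining the $1/\mu_i$-expansion in the numerator with the $\psi''(\theta(d_i))=-2d_i^3/(d_i^2-y)^3$ term coming from $A_{ii}''$ in the denominator. Both approaches are sound; the paper's has the practical advantage that all residues are taken at deterministic points, while yours avoids the resolvent expansion and the contour entirely.
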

\begin{rmk}\label{rmk:representation} Lemma \ref{representation} suggests that  the joint distribution of $\mu_i$ and $ |\langle \mb{w}, \mb{\xi}_i\rangle|^2$ is ultimately governed by the joint distribution of $\chi_{ij}(z)$ ($1\le j \le r$), $\chi_{ii}'(z)$ and $\chi_{\bu i}(z)$. We can also rewrite  \eqref{rankrdecomp} as
\begin{align}
 |\langle \mb{w}, \mb{\xi}_i\rangle|^2&= \frac{d_i^2-y}{d_i (d_i+y)}  |\langle \mb{w}, \bv_i\rangle|^2 - \frac{\wt{w}_i}{\sqrt{N}}{\mathbf{l}_i}^*{\mathbf{\chi}}_i + \frac{1}{N}{{\mathbf{\chi}}_i}^*A_i {\mathbf{\chi}}_i+ O_\prec\left( \frac{\wt w_i}{N} \right)+O_\prec(N^{-\frac32}), \label{rankrdecomp1}
\end{align}
by defining the  random vector 
\begin{align*}
{\mathbf{\chi}}_i \equiv {\mathbf{\chi}}_i(\theta(d_i)):= \sqrt N
\begin{pmatrix}
\chi_{i1}(\theta(d_i)),&\cdots,&\chi_{ir}(\theta(d_i)),& \chi_{\bu i}(\theta(d_i)),& \chi_{ii}'(\theta(d_i)) 
\end{pmatrix}^*,
\end{align*}
the deterministic vector $\mathbf{l}_i=(l_{i}(j))\in \mathbb{R}^{r+2}$ with components
$$l_{i}(j) =
\begin{cases}
  2 d_i (d_i+1)^2\wt w_i, & \text{if } j=i;\\
  2f(d_i) \nu_i(d_j)  \wt w_j, & \text{if } 1\le j \neq i \le r;\\
 2f(d_i) , & \text{if } j=r+1;\\
  f(d_i)^2 \wt w_i, & \text{if } j=r+2,
\end{cases}
$$
and the $(r+2)\times (r+2)$ symmetric matrix $A_i$ whose  non-zero entries are given by
$$A_i (j,k)=
\begin{cases}
 g(d_i) \nu_i(d_j) \nu_i(d_k) \wt w_j \wt w_k, & \text{if } 1\le j,k\le r \text{ and } j,k \neq i;\\
  g(d_i) \nu_i(d_j) \wt w_j, & \text{if } 1\le j\neq i\le r \text{ and } k=r+1;\\
g(d_i),& \text{if } j=k=r+1.
\end{cases}
$$
\end{rmk}

 \begin{proof}[Proof of Lemma \ref{representation}]
 First, the proof of \rf{decompeigenv} can be done similarly to the proof of Proposition $7.1$ in \cite{KY13}. For the convenience of the reader, we state the details below.  Recall (\ref{19071901}) together with (\ref{19071902}).  We rewrite   $S$ as
$$S=V\diag(d_1,\cdots,d_r)V^*,$$ 
by setting $V=(\bv_1,\cdots,\bv_r)$. Therefore, we have 
 $$\Sigma^{-1}=I-VDV^*.$$
 with
 $$D=\diag\left(\frac{d_1}{1+d_1},\cdots,\frac{d_r}{1+d_r} \right).$$
  Then, by elementary calculation, we have 
 \begin{align*}
 Q-z=\Sigma^{1/2}\mG_1^{-1}(z)\big(I_M+z\mG_1(z)VDV^*\big)\Sigma^{1/2}.
 \end{align*}
 Notice that $\mu_i$ is the $i$th largest real value such that $\det{(Q-\mu_i)}=0$. Further by the fact that $\mu_i$ stays away from the spectrum of $H$ with high probability (\cf Lemma  \ref{locationeig},  Assumption \ref{supercritical}), together with the identity $\det \big(I_M+z\mG_1(z)VDV^*\big)=\det(D)\det  \big(D^{-1}+zV^*\mG_1(z)V\big)$, we have that $\mu_i$ is the $i$th largest real solution to the equation $\det  \big(D^{-1}+z V^*\mG_1(z)V\big)=0$ with high probability. 
 
For $x\in [\lambda_++\delta, K]$ with sufficiently small constant $\delta>0$ and sufficiently large constant $K>0$,  we define the matrices $\mathcal{A}(x)=(\mathcal{A}_{ij}(x))$ and $\wt{\mathcal{A}}(x)=(\wt{\mathcal{A}}_{ij}(x))$ by setting
 \begin{align*}
 \mathcal{A}_{ij}(x)=(1+d_i^{-1})\delta_{ij}+x\mb{v}_i^*\mG_1(x)\mb{v}_j-xm_1(x)\delta_{ij}, \quad \wt{\mathcal{A}}_{ij}(x)=\delta_{ij}((1+d_i^{-1})+x\mathbf{v}_i^*\mG_1(x)\mb{v}_i-xm_1(x)),
 \end{align*}
 Further, we denote the eigenvalues of $\mathcal{A}(x)$ and $\wt{\mathcal{A}}(x)$ by $a_1(x)\leq \ldots\leq a_r(x)$ and $\wt{a}_1(x)\leq \ldots\leq \wt{a}_r(x)$ respectively. Apparently, one has
 \begin{align}
 \wt{a}_i(x)= (1+d_i^{-1})+x\mathbf{v}_i^*\mG_1(x)\mb{v}_i-xm_1(x), \label{19072601}
 \end{align}
 with high probability by the isotropic local law (\ref{est_DG}) and the Assumption \ref{supercritical}. We then claim that, in order to prove (\ref{decompeigenv}), it suffices to show  the following two estimates
 \begin{align}
 &\mu_im_1(\mu_i)=-a_i(\theta(d_i))+O_\prec(\frac{1}{N}),\label{19072602}\\
 & a_i(\theta(d_i))=\wt{a}_i(\theta(d_i))+O_\prec(\frac{1}{N}).  \label{19072603}
 \end{align}
 Combining (\ref{19072601}), (\ref{19072602}) and (\ref{19072603}), we have 
 \begin{align*}
\mu_im_1(\mu_i)- \theta(d_i)m_1(\theta(d_i))= -(1+d_i^{-1})-\theta(d_i)\mathbf{v}_i^*\mG_1(\theta(d_i))\mb{v}_i. 
 \end{align*}
 Expanding $\mu_im_1(\mu_i)$ around $\theta(d_i)m_1(\theta(d_i))$ with the aid of Lemma \ref{locationeig}  will then lead to  (\ref{decompeigenv}). Therefore, what remains is to prove (\ref{19072602}) and (\ref{19072603}).  
 
 We start with (\ref{19072602}).  First, by the fact that $\mu_i$ is a solution to $\det  \big(D^{-1}+z V^*\mG_1(z)V\big)=0$, it is easy to see that 
 $\mu_im_1(\mu_i)= -a_k(\mu_i)$ for some $k$.  But by isotropic law (\ref{est_DG}) and Lemma \ref{locationeig}, we see that  $\mathcal{A}=\wt{\mathcal{A}}+O_\prec(N^{-\frac12})$, where $O_\prec(N^{-\frac12})$ represents a matrix bounded in  operator norm  by $O_\prec(N^{-\frac12})$. This leads to the  estimate $a_k(\mu_i)=\wt{a}_k(\mu_i)+O_\prec(N^{-\frac12})$. Further, by  (\ref{est_DG}) and Lemma \ref{locationeig} one can easily show that $\mu_im_1(\mu_i)=-(1+d_i^{-1})+O_\prec(N^{-\frac12})$ and $\wt{a}_k(\mu_i)=(1+d_k^{-1})+O_\prec(N^{-\frac12})$.  Therefore, due to the fact that $d_i$'s are well separated, we have $\mu_im_1(\mu_i)= -a_i(\mu_i)$  with high probability. Next, by the isotropic law (\ref{est_DG}), one can also check that 
 \begin{align*}
 \|\partial_x \mathcal{A}(x)\|_{\text{op}}\prec\frac{1}{\sqrt{N}}. 
 \end{align*}
 This together with Lemma \ref{locationeig} leads to
 \begin{align*}
 |a_i(\mu_i)-a_i(\theta(d_i))|\prec \frac{1}{N}.
 \end{align*}
 Combining the above with the fact $\mu_im_1(\mu_i)= -a_i(\mu_i)$  with high probability, we arrive at (\ref{19072602}). 
 
 Next, we prove (\ref{19072603}).  Observe that the diagonal entries of $\mathcal{A}-\wt{\mathcal{A}}$ are $0$, and $\wt{\mathcal{A}}$ is a diagonal matrix. So expanding the eigenvalues of $\mathcal{A}$ around the eigenvalues of $\wt{\mathcal{A}}$ using the perturbation theory, we see that the first order  term vanishes. Hence, it suffices to estimate the second order term. More specifically, we have 
 \begin{align*}
 |a_i(\theta(d_i))-\wt{a}_i(\theta(d_i))|\prec \frac{\|\mathcal{A}-\wt{\mathcal{A}}\|_{\text{op}}^2}{\min_{j\neq i} |\wt{a}_j(\theta(d_i))-\wt{a}_i(\theta(d_i))|}\prec\frac{1}{N},
 \end{align*} 
 where the last step follows from the fact that $\wt{a}_k(\theta(d_i))=1+d_k^{-1}+O_\prec(N^{-\frac12})$ and the fact that the $d_i$'s are well separated. This concludes the proof of (\ref{19072603}).

Then,  we turn to  prove \rf{rankrdecomp}.
 Let $\Gamma_i$ be the boundary of a disc centered at $d_i$ with sufficiently  small (but fixed) radius,  such that this disc is away from $\lambda_+$ by a constant order distance and also $\Gamma_i$'s are well-separated from each other by a constant order distance for all $i\in \lb 1, r_0\rb$. Notice that this can be guaranteed by Assumption \ref{supercritical}. According to Lemma  \ref{locationeig}, together with the Cauchy integral, we have the following equality with high probability 
\begin{align}
|\langle\mb{w},\mb{\xi}_i\rangle|^2=-\frac{1}{2\pi \ii}\oint_{\theta(\Gamma_i)}\mb{w}^*G(z)\mb{w}\,{\rm d}z,\label{greenfunctionrepre}
\end{align}
 where $\theta(\Gamma_i)$ is the image of $\Gamma_i$ under the map $\theta(\cdot)$ defined in (\ref{def. of theta}). 
 With the notations for $V, S, D$ and $\Sigma^{-1}$, using the setting (\ref{19071802}), we can write 
\begin{align*}
G(z)&=\left( \Sigma^{\frac12} XX^*  \Sigma^{\frac12} - z I \right)^{-1}= \Sigma^{-\frac12} \left( \mG_1^{-1}(z) +z \Sigma^{-\frac12} S \Sigma^{-\frac12} \right)^{-1} \Sigma^{-\frac12}\\
& = \Sigma^{-\frac12} \left( \mG_1^{-1}(z) + z VDV^* \right)^{-1} \Sigma^{-\frac12}.
\end{align*}
Then, it follows from the matrix inversion lemma that
\begin{align*}
G(z)&= \Sigma^{-\frac12}\mG_1(z) \Sigma^{-\frac12}-z \Sigma^{-\frac12}\mG_1(z) V\big(D^{-1}+zV^*\mG_1(z) V\big)^{-1} V^*\mG_1(z) \Sigma^{-\frac12}. 
\end{align*}
With the notation introduced in \eqref{def:wtw}, we can further write
 \begin{align}
 \mb{w}^*G(z)\mb{w}=\widetilde{\mb{w}}^*\mG_1(z)\widetilde{\mb{w}}-z\widetilde{\mb{w}}^*\mG_1(z){V}\big(D^{-1}+zV^*\mG_1(z)V\big)^{-1}V^*\mG_1(z) \widetilde{\mb{w}}. \label{19071903}
 \end{align}
Plugging (\ref{19071903}) into \eqref{greenfunctionrepre}, and noticing that the contour integral of $\widetilde{\mb{w}}^*\mG_1(z)\widetilde{\mb{w}}$ on $\theta(\Gamma_i)$ is zero with high probability by Assumption \ref{supercritical} and the rigidity of eigenvalues of $H$ (\cf (\ref{190726100})), one has
  \begin{align}\label{eq:intrep1}
|\langle\mb{w},\mb{\xi}_i\rangle|^2=\frac{1}{2\pi \ii}\oint_{\theta(\Gamma_i)} z\widetilde{\mb{w}}^*\mG_1(z){V}\big(D^{-1}+zV^*\mG_1(z)V\big)^{-1}V^*\mG_1(z) \widetilde{\mb{w}}\,{\rm d}z,
  \end{align}
with high probability. 

For the integrand in (\ref{eq:intrep1}), we first recall the notation in (\ref{19071905}) and then 
we apply resolvent expansion 
\begin{align}
\big(D^{-1}+zV^*\mG_1(z)V\big)^{-1} 
=&L(z) - z L(z) V^*\Xi(z)V L(z) \nonumber\\
&+ \big( z L(z)V^*\Xi(z)V \big)^2 \big(D^{-1}+zV^*\mG_1(z)V\big)^{-1}, \label{19071907}
\end{align}
where 
$$L(z) := \left( D^{-1} + z m_1(z)  \right)^{-1}.$$
With (\ref{19071907}), we can further rewrite  \eqref{eq:intrep1}  as
\begin{align*}
|\langle\mb{w},\mb{\xi}_i\rangle|^2=\frac{1}{2\pi \ii} &\oint_{\theta(\Gamma_i)} z \big( m_1(z) \wt{\bw}^*V +\wt{\bw}^* \Xi(z) V \big)\Big( L(z) - z L(z) V^*\Xi(z)V L(z)\\
& + \big( z L(z)V^*\Xi(z)V \big)^2 \big(D^{-1}+zV^*\mG_1(z)V\big)^{-1} \Big) \big( m_1(z) V^* \wt{\bw} + V^* \Xi(z) \wt{\bw} \big) \,{\rm d}z.
\end{align*}
Applying \eqref{est_DG}, we  can further write
\begin{align*}
|\langle\mb{w},\mb{\xi}_i\rangle|^2 = S_1 + S_2 + S_3 + O_\prec\big(N^{-\frac{3}{2}}\big),
\end{align*}
by defining
\begin{align*}
&S_1:= \frac{1}{2\pi \ii}\oint_{\theta(\Gamma_i)} z m_1^2(z) \wt{\bw}^* V L(z) V^* \wt\bw \,{\rm d}z,\\
&S_2:= \frac{1}{2\pi \ii}\oint_{\theta(\Gamma_i)} \Big( 2 z m_1(z)\wt{\bw}^* V L(z) V^* \Xi(z) \wt\bw - z^2 m_1^2(z) \wt{\bw}^* V L(z) V^* \Xi(z) V L(z) V^* \wt\bw   \Big) \,{\rm d}z,\\
&S_3:=  \frac{1}{2\pi \ii} \oint_{\theta(\Gamma_i)} \Big( z  \wt\bw^* \Xi(z) V L(z) V^* \Xi(z) \wt\bw - 2 z^2 m_1(z) \wt\bw^* (VL(z)V^* \Xi(z))^2 \wt\bw\\
&\qquad \qquad \qquad \qquad+ z^3 m_1^2(z) \wt\bw^* (VL(z)V^* \Xi(z))^2V L(z) V^* \wt\bw  \Big) \,{\rm d}z.
\end{align*}

It remains to estimate $S_1,S_2$ and $S_3$. From the definitions in (\ref{m1m2}) and (\ref{def. of theta}),  
it is easy to check the identity
\begin{align}
1+z^{-1} + \theta(z) m_1 (\theta(z))=0.  \label{19071910}
\end{align} 
With the above identity, we see that
\begin{align*}
V L(\theta(z)) V^* = \sum_{j=1}^r  \frac{\bv_j \bv_j^*}{1+d_j^{-1} + \theta(z) m_1 (\theta(z))} = \sum_{j=1}^r  \frac{ z d_j \bv_j \bv_j^*}{z- d_j}.
\end{align*}
Therefore, by the residue theorem,
\begin{align}
S_1 &= \frac{1}{2\pi \ii}\oint_{\Gamma_i}  \theta(z) \theta'(z) m_1^2(\theta(z)) \wt{\bw}^* V L(\theta(z)) V^* \wt\bw \,{\rm d}z \nonumber\\
&= \theta(d_i) \theta'(d_i) m_1^2 (\theta(d_i)) d_i^2 (\wt\bw^* \bv_i)^2= \frac{d_i^2 - y }{d_i (d_i+ y)} |\langle \bw,\bv_i \rangle|^2. \label{19071940}
\end{align}
Similarly,  using (\ref{19071910}), we can get 
\begin{align*}
S_2 &= \frac{1}{2\pi \ii}\oint_{\Gamma_i}  \Big( 2\theta(z) \theta'(z) m_1(\theta(z))  \sum_{j=1}^r \frac{z d_j}{z- d_j} (\wt\bw^* \bv_j) \wt\bw^* \Xi(\theta(z)) \bv_j \\
&\quad- \theta^2(z) \theta'(z) m_1^2(\theta(z)) \sum_{j,k=1}^r \frac{z^2 d_j d_k}{(z-d_j)(z-d_k)} (\wt\bw^* \bv_j) (\wt\bw^* \bv_k) \bv_j^*\Xi(\theta(z)) \bv_k \Big) \,{\rm d}z.
\end{align*} 
Further by the residue theorem together with the definition of $\theta$ and $m$ in (\ref{m1m2}) and (\ref{def. of theta}), we can get
\begin{align*}
S_2 &= -2\frac{(d_i+1)(d_i^2-y)}{d_i} {\wt w_i}\wt\bw^*\Xi(\theta(d_i))\bv_i -2 \frac{(d_i+1)^2(d_i^2-y)}{d_i} \sum_{j\neq i} \frac{d_j}{d_i-d_j} {\wt w_i} {\wt w_j} \chi_{ij}(\theta(d_i))\\
&-\frac{(d_i+1)^2(d_i^2-y)^2}{d_i^2} {\wt w_i}^2 \chi_{ii}'(\theta(d_i)) - 2\frac{(d_i+1)(d_i^3+y)}{d_i} {\wt w_i}^2 \chi_{ii}(\theta(d_i)),
\end{align*}
where we also recalled the notations in (\ref{19071920}). With the functions defined in (\ref{19071921}) and (\ref{19071945}), we can further write 
\begin{align}
S_2=&-2d_i (d_i+1)^2 {\wt w_i}^2 \chi_{ii}(\theta(d_i)) - 2 f(d_i) \sum_{j\neq i} \nu_i(d_j) \wt w_i \wt w_j \chi_{ij}(\theta(d_i)) \nonumber\\
&- 2 f(d_i) \wt w_i \chi_{\bu i}(\theta(d_i)) - f(d_i)^2 \wt w_i^2 \chi_{ii}'(\theta(d_i)).  \label{19071941}
\end{align}

Observe from the isotropic local law \eqref{est_DG} and Assumption \ref{supercritical} that $S_2=O_\prec\big(\wt{w}_i/\sqrt{N}\big)$. Hence,  it can degenerate when $\wt{w}_i$ is small or even $0$. Hence, it is necessary to consider the fluctuation of the  term $S_3$ as well.  In the sequel, we turn to estimate $S_3$.  We estimate the integrals of three terms in the integrand separately. First, using the residue theorem together with the notations in (\ref{19071920}) and (\ref{19071921}), 
we have  
\begin{align}
&\frac{1}{2\pi \ii} \oint_{\theta(\Gamma_i)}z  \wt\bw^* \Xi(z) V L(z) V^* \Xi(z) \wt\bw \,{\rm d}z = \theta(d_i) \theta'(d_i) d_i^2 \left(\wt\bw^*\Xi(\theta(d_i)) \bv_i \right)^2 \nonumber\\
&=g(d_i) \Big(\sum_{j,k=1}^r \wt w_j \wt w_k \chi_{ij}(\theta(d_i))\chi_{ik}(\theta(d_i)) + \chi_{\bu i}^2 (\theta(d_i)) + 2 \sum_{j=1}^r \wt w_j \chi_{ij}(\theta(d_i))\chi_{\bu i} (\theta(d_i)) \Big). \label{19071930}
\end{align}
For the second part of the integral, we have 
\begin{align*}
&-\frac{1}{2\pi \ii} \oint_{\theta(\Gamma_i)}  2 z^2 m_1(z) \wt\bw^* (VL(z)V^* \Xi(z))^2 \wt\bw \,{\rm d}z\\
&=-2 \theta^2(d_i)\theta'(d_i)m_1(\theta(d_i)) \sum_{j\neq i} \frac{d_i^3 d_j}{d_i - d_j} (\wt\bw^*\bv_j)\wt\bw^*\Xi(\theta(d_i))\bv_i\chi_{ij}(\theta(d_i)) \\
&\quad -2 \theta^2(d_i)\theta'(d_i)m_1(\theta(d_i)) \sum_{j\neq i} \frac{d_i^3 d_j}{d_i - d_j} (\wt\bw^*\bv_i)\wt\bw^*\Xi(\theta(d_i))\bv_j\chi_{ij}(\theta(d_i)) \\
&\quad-2d_i^2 (\wt\bw^*\bv_i) \Big(\theta^2(z)\theta'(z)m_1(\theta(z))z^2 \wt\bw^*\Xi(\theta(z))\bv_i \chi_{ii}(\theta(z)) \Big)'\Big|_{z=d_i}.
\end{align*}

Then,  by using the isotropic law \eqref{est_DG}, Assumption \ref{supercritical}, and  the notations in (\ref{19071920}) and (\ref{19071921}),  we have 
\begin{align}
&-\frac{1}{2\pi \ii} \oint_{\theta(\Gamma_i)}  2 z^2 m_1(z) \wt\bw^* (VL(z)V^* \Xi(z))^2 \wt\bw \,{\rm d}z\nonumber\\
&=-2d_i^3 \theta^2(d_i)\theta'(d_i)m_1(\theta(d_i)) \sum_{j\neq i} \frac{ d_j}{d_i - d_j} (\wt\bw^*\bv_j)\wt\bw^*\Xi(\theta(d_i))\bv_i \chi_{ij}(\theta(d_i)) + O_\prec\left( \frac{\wt{w}_i}{N}\right)  \nonumber\\
&=2 g(d_i) \sum_{j\neq i} \frac{(1+d_i)d_j}{d_i-d_j} \wt w_j \chi_{ij} (\theta(d_i)) \Big( \sum_{k\neq i} \wt w_k\chi_{ik} (\theta(d_i)) + \chi_{\bu i} (\theta(d_i))\Big)+ O_\prec\left( \frac{\wt{w}_i}{N}\right). \label{19071931}
\end{align}
Analogously,  the last part of the integral can be estimated by  
\begingroup
\allowdisplaybreaks
\begin{align}
&\frac{1}{2\pi \ii} \oint_{\theta(\Gamma_i)}  z^3 m_1^2(z) \wt\bw^* (VL(z)V^* \Xi(z))^2V L(z) V^* \wt\bw  \,{\rm d}z   \nonumber\\
&=g(d_i) \sum_{k,l\neq i}\frac{(1+d_i)^2 d_k d_l}{(d_i - d_k) (d_i -d_l)} \wt w_k \wt w_l \chi_{ik} (\theta(d_i)) \chi_{il} (\theta(d_i))+ O_\prec\left( \frac{\wt{w}_i}{N}\right). \label{19071932}
\end{align}
\endgroup
Combining (\ref{19071930})-(\ref{19071932}), 
after necessary simplification, we arrive at
\begin{align}
S_3=&g(d_i)\Big( \chi_{\bu i}^2 (\theta(d_i))+ 2 \sum_{j\neq i} \nu_i(d_j) \wt w_j \chi_{ij}(\theta(d_i)) \chi_{\bu i} (\theta(d_i)) \nonumber\\
&+ \sum_{j,k\neq i} \nu_i(d_j) \nu_i(d_k) \wt w_j \wt w_k \chi_{ij}(\theta(d_i))\chi_{ik}(\theta(d_i)) \Big)+ O_\prec\left( \frac{\wt{w}_i}{N}\right). \label{19071942}
\end{align}
By (\ref{19071940}), (\ref{19071941}) and (\ref{19071942}), we can conclude the proof of Lemma \ref{representation}.
 \end{proof}

\section{Proof of Theorem \ref{rankrmainthm} } \label{s.proof of thm}

In this section, we state  the proof of the  main result Theorem \ref{rankrmainthm}, based on Proposition \ref{recursivemP}, whose proof will be stated in Section \ref{sec:prop}. The starting point  is Lemma \ref{representation} and  Remark \ref{rmk:representation}, which state that  the study of $|\langle \bw,\mb{\xi}_i \rangle|^2$ can be  reduced to the study of the random vector 
\begin{align*}
{\mathbf{\chi}}_i(z) := \sqrt N
\begin{pmatrix}
\chi_{i1}(z),&\cdots,&\chi_{ir}(z),& \chi_{\bu i}(z),& \chi_{ii}'(z) 
\end{pmatrix}^T
\end{align*}
at  $z=\theta(d_i)$. It suffices to show that ${\mathbf{\chi}}_i(z)$ is asymptotically real Gaussian. Before we give the precise statement, let us introduce some necessary notations. For brevity, in the sequel, we will very often omit the $z$-dependence from the notations. For instance, we will write $m_{1,2}(z)$ as $m_{1,2}$. 

In the sequel, we fix an $i\in \lb 1, r_0\rb$. Define a symmetric matrix  $\mathcal{M}_i\equiv \mathcal{M}_i(z)\in{\mathbb{C}^{(r+2)\times (r+2)}}$ with diagonal entries 
\begin{align}\label{def:Mdiag}
\mathcal M_i (j,j) = 
\begin{cases}
2m_1^2(zm_1)',&\text{ if } j=i;\\
m_1^2(zm_1)',&\text{ if } 1\le j \neq i\le r;\\
m_1^2(zm_1)'\Vert\bu\Vert^2, &\text{ if } j=r+1;\\
2\big(m_1m_1'(zm_1)''+(m_1')^2(zm_1)'+\frac{1}{6}m_1^2(zm_1)'''\big),&\text{ if } j=r+2
\end{cases}
\end{align}
and the only non-zero off-diagonal entry
\begin{align}\label{def:Moff}
\mathcal M_i (i,r+2) = \big(m_1^2(zm_1)'\big)'.
\end{align}
Recall the notations defined in (\ref{def:s12}).  We then further define the  symmetric matrix  $\mathcal{K}_i\equiv \mathcal{K}_i(z)\in \mathbb{C}^{(r+2)\times (r+2)}$ whose  $r\times r$ upper left corner is given by
\begin{align*}
\mathcal K_i(j,k) = \mathbf{s}_{1,1,2}(\bv_j,\bv_k,\bv_i) (zm_2m_1^2)^2, \quad\text{for } 1\le j,k\le r,
\end{align*}
and the remaining entries are
\begin{align}\label{def:K}
\mathcal K_i(j,k)=
\begin{cases}
\mathbf{s}_{1,1,2} (\bv_j, \bu,\bv_i) (zm_2m_1^2)^2,& \text{if } 1\le j \le r \text{ and } k=r+1;\\
\frac{1}{2} \mathbf{s}_{1,3} (\bv_j,\bv_i) \big((zm_2m_1^2)^2\big)', & \text{if } 1\le j \le r \text{ and } k=r+2;\\
\mathbf{s}_{2,2} (\bu,\bv_i) (zm_2m_1^2)^2,& \text{if } j=k=r+1;\\
\mathbf{s}_4(\bv_i) \big((zm_2m_1^2)'\big)^2,& \text{if } j=k=r+2;\\
\frac{1}{2} \mathbf{s}_{1,3} (\bu,\bv_i) \big((zm_2m_1^2)^2\big)', & \text{if } j=r+1 \text{ and } k=r+2.
\end{cases}
\end{align}

We further denote 
\begin{align}\label{def:variance}
\mathcal{V}_i(z):=\mathcal{M}_i(z)+\kappa_4\mathcal{K}_i(z).
\end{align}

\begin{lem}\label{lem:normal} Under the assumptions of Theorem \ref{rankrmainthm}, we have
$${\mathbf{\chi}}_i(\theta(d_i)) \simeq \mathcal N\left(\mathbf{0}, \mathcal V_i(\theta(d_i)) \right).$$
\end{lem}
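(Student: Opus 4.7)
\medskip

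\noindent\textbf{Proof plan.}
The plan is to establish joint Gaussianity of the $(r+2)$-dimensional real vector $\boldsymbol{\chi}_i(\theta(d_i))$ by the Cramér--Wold device combined with the method of moments. Thus, for an arbitrary deterministic $\boldsymbol{\alpha}=(\alpha_1,\ldots,\alpha_{r+2})^{T}\in\mathbb{R}^{r+2}$, I set
\begin{equation*}
P \;\deq\; \boldsymbol{\alpha}^{T}\boldsymbol{\chi}_i(\theta(d_i)) \;=\; \sqrt{N}\Big(\sum_{j=1}^{r}\alpha_j\,\bv_i^{*}\Xi\bv_j + \alpha_{r+1}\,\bu^{*}\Xi\bv_i + \alpha_{r+2}\,\bv_i^{*}\Xi'\bv_i\Big),
\end{equation*}
and it suffices to prove that $\mathbb{E}[P^{n}]\to (n-1)!!\,(\boldsymbol{\alpha}^{T}\mathcal{V}_i(\theta(d_i))\boldsymbol{\alpha})^{n/2}$ for every even integer $n\ge 2$ (and $\to 0$ for odd $n$) as $N\to\infty$. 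Since the three types of quantities appearing in $P$ are all of the form $\sqrt{N}\,\mathbf{a}^{*}\Xi^{(\ell)}\mathbf{b}$ with $\ell\in\{0,1\}$, and $\Xi=\mathcal{G}_1-m_1 I$ is centered up to an $O_{\prec}(N^{-1/2})$ error by Theorem~\ref{isotropic}, the random variable $P$ is bounded in probability and all its moments are well-controlled.

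The main step is to derive a self-consistent recursion of the form
\begin{equation*}
\mathbb{E}[P^{n}] \;=\; (n-1)\,\big(\boldsymbol{\alpha}^{T}\mathcal{V}_i(\theta(d_i))\boldsymbol{\alpha}\big)\,\mathbb{E}[P^{n-2}] \;+\; o(1),
\end{equation*}
which is exactly the content of Proposition~\ref{recursivemP} specialized to the linear combination defining $P$. To prove such a recursion, I would start by writing $\mathbb{E}[P^{n}]=\mathbb{E}[P\cdot P^{n-1}]$, and expand one factor of $P$ using the identity $\Xi=\mathcal{G}_1-m_1 I$ together with the resolvent identity $\mathcal{G}_1 = -z^{-1}(I - \mathcal{G}_1 X X^{*})$, so that the entries $x_{ab}$ of $X$ are exposed. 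Then I would apply the cumulant expansion formula of Lemma~\ref{cumulantexpansion} to each $x_{ab}$, differentiating the remaining factor (which contains $\mathcal{G}_1$'s, $\mathcal{G}_1^{2}$'s, and $(n-1)$ further copies of $P$) via the formulas of Appendix~\ref{s.derivative of G}. The terms where both derivatives fall on the same remaining $P$ produce the Gaussian pairing, contributing the factor $(n-1)\boldsymbol{\alpha}^{T}\mathcal{V}_i\boldsymbol{\alpha}$ after isotropic-law substitutions $\mathcal{G}_1\approx m_1 I$ and $X^{*}\mathcal{G}_1 X\approx -(1+zm_2)I$; the second-order cumulant terms reproduce the matrix $\mathcal{M}_i$ in \eqref{def:Mdiag}--\eqref{def:Moff}, and the fourth-order cumulant terms produce the $\kappa_4\mathcal{K}_i$ contribution in \eqref{def:K}. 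Terms where the two derivatives hit different copies of $P$, or where additional $\mathcal{G}_1$ factors are produced, will be shown to be $o(1)$ using repeated application of Theorem~\ref{isotropic} and Remark~\ref{boundrmk}. The third-cumulant terms either vanish by index-counting (because of the structure $\bv_i^{*}(\cdot)\bv_i$) or are absorbed into the $o(1)$ remainder at the relevant scale.

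With the recursion in hand, iterating it immediately yields the claimed Gaussian moments, so tightness together with the moment convergence gives $\boldsymbol{\chi}_i(\theta(d_i))\simeq \mathcal{N}(\mathbf{0},\mathcal{V}_i(\theta(d_i)))$ in the sense of Definition~\ref{defn_asymptotic}. I expect the main obstacle to be the bookkeeping inside the cumulant expansion: one has to (i) correctly pair the derivatives of the mixed terms $\mathbf{v}_i^{*}\Xi'\mathbf{v}_i$ (which bring in squared Green functions and force the appearance of $(zm_1)''$, $m_1'$, and $(zm_2 m_1^{2})'$ in \eqref{def:Mdiag} and \eqref{def:K}), (ii) verify that all sums $\mathbf{s}_{k,l}$ and $\mathbf{s}_{k,l,t}$ appear with the exact prefactors dictated by the derivatives of $zm_2 m_1^{2}$, and (iii) show that every contribution in which the cumulant expansion produces an extra factor of $X$, $X^{*}$, or $\mathcal{G}_1$ not absorbed by a derivative is controlled by $N^{-1/2}$ using the isotropic local law. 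Carrying this out carefully is precisely the content of Proposition~\ref{recursivemP}, whose proof is deferred to Section~\ref{sec:prop}.
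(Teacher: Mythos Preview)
Your approach is essentially the same as the paper's: reduce to one-dimensional linear combinations (Cram\'er--Wold), prove a recursive moment identity via cumulant expansion (this is exactly Proposition~\ref{recursivemP}), and conclude Gaussianity by the moment method. The outline of how the $\mathcal{M}_i$ and $\kappa_4\mathcal{K}_i$ pieces arise is also correct in spirit.

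There is, however, one technical point you have skipped that the paper handles explicitly. You define $P$ directly at the real point $z=\theta(d_i)$ and assert that ``all its moments are well-controlled.'' But the isotropic local law only gives high-probability bounds, and on the complementary bad event $\mathcal{G}_1(\theta(d_i))$ has no a priori deterministic bound (indeed it is not even defined if an eigenvalue of $H$ happens to equal $\theta(d_i)$). This means one cannot directly pass from $|P|\prec 1$ to control of $\mathbb{E}[P^n]$. The paper resolves this by working instead at $z=\theta(d_i)+\mathrm{i}N^{-K}$ for large fixed $K$, so that $\|\mathcal{G}_1(z)\|\le N^{K}$ deterministically (see \eqref{trivialbd_G1}--\eqref{trivialbd_XG1X}); the bad-event contribution to any moment is then at most $N^{CK}\cdot N^{-\phi}$, which is negligible for $\phi$ large. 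Only after the recursion \eqref{est_EmPk} is established at this regularized $z$ does the paper pass back to $\theta(d_i)$ by a continuity argument. You should insert this regularization step; without it the moment computation is not rigorously justified.
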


With the above lemma, we now can finish the proof of Theorem \ref{rankrmainthm}.
\begin{proof}[Proof of Theorem \ref{rankrmainthm}]
Recall  Lemma \ref{representation}. Then, by setting
\begin{align}
&\Upsilon_i=-\sqrt{N}(d_i^2-y)\theta(d_i)\chi_{ii}(\theta(d_i)), \nonumber\\ 
&\Theta_i^{\mb{w}}=- 2\sqrt{N} d_i (d_i+1)^{3/2} \wt w_i \chi_{ii}(\theta(d_i)) -\sqrt{N}f(d_i)^2 (1+d_i)^{-1/2}\wt w_i \chi_{ii}'(\theta(d_i)) \nonumber\\
 & - 2\sqrt{N}f(d_i)(1+d_i)^{-1/2}\Big(\sum_{j\neq i} \nu_i(d_j)  \wt w_j \chi_{ij}(\theta(d_i))+\chi_{\bu i}(\theta(d_i))\Big),\nonumber\\
&\Lambda_i^{\mb{w}}=\sqrt{N}\sqrt{g(d_i)}\Big(\sum_{j\neq i} \nu_i(d_j)  \wt w_j \chi_{ij}(\theta(d_i))+\chi_{\bu i}(\theta(d_i))\Big),
\end{align}
we get \rf{thmdecompeigenv} and \eqref{rankrdecompv3} in  Theorem \ref{rankrmainthm}. Next, notice that $\Upsilon_i$,  $\Theta_i^{\mb{w}}$ and $\Lambda_i^{\mb{w}}$ are all linear combinations of the components of $\mathbf{\chi}_i(\theta(d_i))$. Then by Lemma \ref{lem:normal}, they are also asymptotically jointly Gaussian with mean $0$. Further,  elementary calculation of the quadratic forms of  the entries of $\mathcal V_i(\theta(d_i))$ using the following basic facts at $z=\theta(d_i)$
\begin{align*}
&m_1^2(zm_1)' = \frac{1}{(d_i+y)^2(d_i^2-y)},\\
&m_1m_1'(zm_1)''+(m_1')^2(zm_1)'+\frac{1}{6}m_1^2(zm_1)'''=\frac{d_i^4(4d_i^4+4d_i^3y-3d_i^2y+d_i^2y^2+y^2+y^3)}{(d_i+y)^4(d_i^2-y)^5},\\
&zm_2m_1^2=-\frac{1}{d_i(d_i+y)},\qquad(zm_2m_1^2)'=\frac{2d_i+y}{(d_i+y)^2(d_i^2-y)},
\end{align*}
eventually leads to the covariance matrix of  $(\Upsilon_i, \Theta_i^{\mb{w}}, \Lambda_i^{\mb{w}})$, which is   stated in  Theorem \ref{rankrmainthm}.

This completes the proof of Theorem \ref{rankrmainthm}.
\end{proof}

In the rest of this section, we prove Lemma \ref{lem:normal}, based on our key technical result, Proposition \ref{recursivemP}. In order to show the asymptotic Gaussianity of $\mathbf{\chi}_i(\theta(d_i))$, it suffices to show that all  linear combinations of the components of $\mathbf{\chi}_i(\theta(d_i))$ are asymptotic Gaussian.  Our proof will be based on a moment estimate. This requires a deterministic bound for the Green function,  in order to control the contribution of the bad event in the isotropic local laws. To this end, we introduce a tiny imaginary part to the parameter $z$, such that the Green functions can be bounded by $1/\Im z$ deterministically. Specifically, in the sequel, we set 
\begin{align}
z=\theta(d_i)+\ii N^{-K}, \label{19072401}
\end{align}
for some sufficiently large constant $K$. For a fixed deterministic $(r+2)$-dim column vector $\mb{c}=(c_1,\,\cdots,\,c_{r+2})^*$, we define
\begin{align}
\mP:=\mb{c}^*\mathbf{\chi}_i(z), \label{19071950}
\end{align}
where $z$ is given in (\ref{19072401}).  Notice that $|\mathcal{P}|\prec 1$ by the isotropic local law. 
 Here we omit the dependence of $\mathcal{P}$ on $\mathbf{c}$ and the index $i$ for simplicity. Hereafter we always assume that $\mathbf{c}$ and $i$ are fixed. 
The following proposition is our main technical task. 

\begin{prop}[Recursive moment estimate] \label{recursivemP} Let $\mathcal{P}$ be defined in (\ref{19071950}) with $z$ given in (\ref{19072401}). Under the assumption of  Theorem \ref{rankrmainthm}, we have 
\begin{align}
&(i) \quad\mbE \mathcal{P}=O_\prec(N^{-\frac12}), \label{est_EmP}\\
&(ii)\quad \mbE \mathcal{P}^l =(l-1)\mathcal{V}_{i,\mathbf{c}}\mbE P^{l-2}+O_\prec(N^{-\frac12}),\label{est_EmPk}
\end{align}
where {$\mathcal{V}_{i,\mathbf{c}}=\mb{c}^*{\mathcal{V}_i(\theta(d_i))}\mb{c }$} with  $\mathcal{V}_i(\theta(d_i))$ defined in \eqref{def:variance}.
\end{prop}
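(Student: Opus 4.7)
The strategy is the standard cumulant-expansion method for CLT-type proofs in random matrix theory. The plan is to write $\mbE\mathcal{P}^l = \mbE[\mathcal{P}\cdot\mathcal{P}^{l-1}]$, expand one copy of $\mathcal{P}$ via a resolvent identity that introduces explicit $x_{\alpha\beta}$ factors, and then integrate by parts using Lemma \ref{cumulantexpansion}. The choice $z=\theta(d_i)+\ii N^{-K}$ in \eqref{19072401} gives the deterministic bound $\|\mG_1\|\le N^K$, which makes the cumulant remainder \eqref{remaining} manageable, while all ``main'' Green-function quantities are controlled by Theorem \ref{isotropic}. Part (i) will be essentially a by-product of the same computation.

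Concretely, each component of $\mathbf{\chi}_i(z)$ is of the form $\sqrt{N}\,\mathbf{a}^*\mG_1^k(z)\mathbf{b} + (\text{deterministic})$ with $k\in\{1,2\}$, the case $k=2$ coming from $\chi_{ii}'(z)$ via $\mG_1' = \mG_1^2$. From $\mG_1 XX^* = I + z\mG_1$ one has the identity
\begin{align*}
z\,\mathbf{a}^*\mG_1\mathbf{b} = -\langle\mathbf{a},\mathbf{b}\rangle + \sum_{\alpha,\beta} x_{\alpha\beta}\, a_\alpha\,(X^*\mG_1\mathbf{b})_\beta,
\end{align*}
together with an analogous identity for $k=2$ obtained by differentiating in $z$. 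Substituting this into one factor of $\mathcal P$ inside $\mbE[\mathcal P\cdot\mathcal P^{l-1}]$ and applying the cumulant expansion to order $\ell=3$ in each $x_{\alpha\beta}$, I expect the following behaviour. The second-cumulant ($\kappa_2 = 1/N$) terms split into a \emph{self} piece—where the differentiation remains inside the expanded $\chi$—and a \emph{cross} piece where it lands on one of the $l-1$ factors in $\mathcal P^{l-1}$. The self piece, after using the isotropic local law to replace Green-function entries by their limits and invoking the identities \eqref{selfconeqt} and \eqref{identitym1m2}, closes into a linear equation that can be solved for $\mbE[\chi\cdot\mathcal P^{l-1}]$ up to $O_\prec(N^{-1/2})$. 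The cross piece, via the product rule, produces exactly $l-1$ identical contributions whose sum assembles into $(l-1)\mathbf{c}^*\mathcal{M}_i\mathbf{c}\,\mbE\mathcal P^{l-2}$, which is the $\mathcal M_i$-part of \eqref{def:variance}. The fourth-cumulant ($\kappa_4 N^{-2}$) triple-derivative terms align three $X$-entries on a diagonal $(a,a)$ index and, after summing weighted by the entries of $\mathbf{v}_i,\mathbf{v}_j,\mathbf{u}$, produce the $\mathbf{s}_{1,1,2},\mathbf{s}_{2,2},\mathbf{s}_{1,3},\mathbf{s}_{4}$ moments in \eqref{def:K}, thereby contributing $\kappa_4\mathbf{c}^*\mathcal{K}_i\mathbf{c}\,(l-1)\mbE\mathcal P^{l-2}$. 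The third-cumulant terms and the $\ell=4$ remainder are both $O_\prec(N^{-1/2})$ by elementary power counting combined with Theorem \ref{isotropic}. Part (i) is exactly the same computation with no surrounding $\mathcal P^{l-1}$: the $\kappa_2$ contribution cancels against the deterministic part through the self-consistent equation for $m_1$, and what remains is $O_\prec(N^{-1/2})$.

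The main technical difficulty is the bookkeeping. Each of the $r+2$ components of $\mathbf{\chi}_i$ must be processed in turn, with careful tracking of whether each derivative hits the expanded $\chi$, the remaining $\mathcal P^{l-1}$, or the factor $(X^*\mG_1\mathbf{b})_\beta$; indices must then be repackaged into the bilinear forms $\mathbf{c}^*\mathcal M_i\mathbf{c}$ and $\mathbf{c}^*\mathcal K_i\mathbf{c}$. The $\chi_{ii}'$ component is especially laborious because $\mG_1^{(l)}$ expands into nested sums per \eqref{derivative} and Appendix \ref{s.derivative of G}, and this is precisely the mechanism generating the entries $\mathcal M_i(i,r+2)=(m_1^2(zm_1)')'$ and $\mathcal M_i(r+2,r+2)$ involving $(zm_1)''$ and $(zm_1)'''$, as well as the derivative $((zm_2 m_1^2)^2)'$ appearing in \eqref{def:K}. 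The final verification that the extracted covariance structure matches $\mathcal V_i$ in \eqref{def:variance} exactly—rather than merely up to rearrangement—will require repeated use of the identities \eqref{identitym1m2} between $m_1,m_2$ and their derivatives at $z=\theta(d_i)$.
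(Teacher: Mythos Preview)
Your proposal is correct and follows essentially the same route as the paper: rewrite $\mathcal{P}$ via the identity $\mG_1^t = z^{-1}(H\mG_1^t - \mG_1^{t-1})$, apply the cumulant expansion in Lemma~\ref{cumulantexpansion} to order three, and sort the resulting terms so that the $\kappa_2$ ``self'' piece closes the equation, the $\kappa_2$ ``cross'' piece yields $(l-1)\mathbf{c}^*\mathcal{M}_i\mathbf{c}\,\mbE\mathcal{P}^{l-2}$, the $\kappa_4$ term with one derivative on the expanded factor and two on $\mathcal{P}^{l-1}$ yields $(l-1)\kappa_4\mathbf{c}^*\mathcal{K}_i\mathbf{c}\,\mbE\mathcal{P}^{l-2}$, and the $\kappa_3$ terms together with the remainder are $O_\prec(N^{-1/2})$. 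One small point you may not yet have noticed in the bookkeeping: after the first pass, the $\mb{y}_2$ (i.e.\ $\chi_{ii}'$) contribution leaves a residual term of the form $\frac{m_1'}{m_1}\sqrt{N}\,\mbE\,\mb{y}_2^*(\mG_1-m_1)\bv_i\,\mathcal{P}^{l-1}$ that requires a second application of the same expansion (now with $s=1$ rather than $s=2$) before it closes; this is what ultimately produces the $m_1'$ factors in the $(r+2)$-row of $\mathcal{M}_i$ and $\mathcal{K}_i$.
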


With the above proposition, we can now show the proof of Lemma \ref{lem:normal}. 

\begin{proof}[Proof of Lemma \ref{lem:normal}]
By Proposition \ref{recursivemP}, one observes that $\mP(z)$ is asymptotically Gaussian with mean $0$ and variance $\mathcal{V}_{i,\mathbf{c}}$. By the definition of $z$ in (\ref{19072401}), and a simple continuity argument for the Green function, one can easily see that $\mathcal{P}(\theta(d_i))$ admits the same asymptotic distribution as  $\mP(z)$, when $K$ is chosen to be sufficiently large. 
Further implied by the fact that  {$\mathcal{V}_{i,\mathbf{c}}=\mb{c}^*{\mathcal{V}_i(\theta(d_i))}\mb{c }$} and the arbitrariness of $\mb{c}$, we have 
$${\mathbf{\chi}}_i(\theta(d_i)) \simeq \mathcal N\left(\mathbf{0}, \mathcal V_i(\theta(d_i)) \right).$$
This concludes the proof of Lemma \ref{lem:normal}. 
\end{proof}

\section{Proof of Proposition \ref{recursivemP}}\label{sec:prop}
This section is devoted to the proof of Proposition \ref{recursivemP}, the recursive moment estimates of $\mathcal{P}$ defined in (\ref{19071950}). The basic strategy is to use the cumulant expansion formula in Lemma \ref{cumulantexpansion} to the functionals of Green functions. In the context of Random Matrix Theory, such an idea dates back to \cite{KKP96}. We also refer to \cite{LS18, HK} for some recent applications of this strategy for other problems in Random Matrix Theory.

First,  we will see that all the random terms we will encounter in the proof  are  one of the following forms: $\mb{\eta}_1^*X^*\mG_1^s\mb{\eta}_2, \mb{\eta}_1^*X^*\mG_1^sX\mb{\eta}_2, \mb{\eta}_1^*\mG_1^s\mb{\eta}_2$, for some fixed $s\in \mathbb{N}$ and some deterministic vectors $\mb{\eta}_1, \mb{\eta}_2$ which are bounded by some constant $C>0$ in $\ell^2$-norm. Notice that under the choice of $z$ in (\ref{19072401}),
we have the deterministic bound
 \begin{align}\label{trivialbd_G1}
 |\mb{\eta}_1^*\mG_1^s\mb{\eta}_2|\leq C(\Im z)^{-s}.
 \end{align}
Similarly, by Cauchy-Schwarz inequality, we have 
 \begin{align}\label{trivialbd_XG1}
 |\mb{\eta}_1^*X^*\mG_1^s\mb{\eta}_2|&\leq  \|\mb{\eta}_1\|\|X^*\mG_1^s\mb{\eta}_2\|\leq C  \Big(\mb{\eta}_2^*\mG_1^s(\bar{z})XX^*\mG_1^s(z)\mb{\eta}_2 \Big)^{\frac12}\nonumber\\
 &= \Big(\mb{\eta}_2^*\mG_1^s(\bar{z})(I+z\mG_1)\mG_1^{s-1}(z)\mb{\eta}_2 \Big)^{\frac12}\leq C(\Im z)^{-s}, 
  \end{align}
 and 
  \begin{align}\label{trivialbd_XG1X}
 |\mb{\eta}_1^*X^*\mG_1^sX\mb{\eta}_2|= |\mb{\eta}_1^*X^*X\mG_2^s\mb{\eta}_2|= |\mb{\eta}_1^*\mG_2^{s-1}\mb{\eta}_2+z\mb{\eta}_1^*\mG_2^s\mb{\eta}_2|\leq C(\Im z)^{-s}.
 \end{align}
 The above deterministic bounds allow us to use the high probability bounds  for the aforementioned quantities (following from the isotropic local law) directly in the calculation of the expectations in (\ref{est_EmP}) and (\ref{est_EmPk}).  

The main tool for the proof is the cumulant expansion formula in Lemma \ref{cumulantexpansion}. By introducing the following two column vectors
 \begin{align}
 \mb{y}_1=(y_{1a})_{a=1}^M:=\sum_{j=1}^rc_j\bv_j+c_{r+1}\bu,\quad \mb{y}_2=(y_{2a})_{a=1}^M:=c_{r+2}\bv_i, \label{defy12}
 \end{align}
we can rewrite
\begin{align}
\mP=\sn\mb{y}_1^*\big(\mG_1-m_1\big)\bv_i+\sn \mb{y}_2^*\big(\mG_1^2-m_1'\big)\bv_i. \label{rankrrepresentmP}
\end{align}
Hence, we can write 
\begin{align}
\mathbb{E}(\mP^l)
=\sn \mathbb{E}\sum_{t=1}^2\mb{y}_t^*\Big(\mG_1^t-\frac{m_1^{(t-1)}}{(t-1)!}\Big)\bv_i\mP^{l-1}. \label{19071960}
\end{align}
Using the identity
\begin{align*}
\mG_1^t=z^{-1}(H\mG_1^t-\mG_1^{t-1}), \qquad t=1,2, 
\end{align*}
we  rewrite  (\ref{19071960}) as
\begin{align*}
\mathbb{E}(\mP^l)=\sn \mathbb{E}\sum_{t=1}^2\mb{y}_t^*\Big(\frac{1}{(1+m_2)z}H\mG_1^{t}+\frac{m_2}{1+m_2}\mG_1^t-\frac{1}{(1+m_2)z}\mG_1^{t-1}-\frac{m_1^{(t-1)}}{(t-1)!}\Big)\bv_i\mP^{l-1},
\end{align*}
Using the first identity in \rf{identitym1m2}, we further have
\begin{align}
\mathbb{E}(\mP^l)=-m_1 \mathbb{E}\sum_{t=1}^2 \sn \mb{y}_t^*\Big( H\mG_1^{t}+z m_2 \mG_1^t- \mG_1^{t-1}+\frac{m_1^{(t-1)}}{(t-1)! m_1}\Big)\bv_i\mP^{l-1}.\label{rankrEmP^l}
\end{align}

Now, we apply the cumulant expansion formula  to the terms $\sn \mathbb{E}\mb{y}_t^*H\mG_1^{t}\bv_i\mP^{l-1}$ for $t=1,2$. For simplicity, we use the following shorthand notation for the summation
$$\sum_{q,k}=\sum_{q=1}^M \sum_{k=1}^N,$$
and similar shorthand notations are used for single index sums. 

By Lemma \ref{cumulantexpansion},  we have 
\begin{align}
&\sn \mathbb{E}\mb{y}_t^*H\mG_1^{s}\bv_i\mP^{l-1}=\sn \mathbb{E}\sum_{q,k}y_{tq}x_{qk}(X^*\mG_1^s\bv_i)_{k }\mP^{l-1}\nonumber\\
&\qquad \qquad=\sn \mathbb{E}\sum_{q,k}y_{tq}\sum_{\alpha=1}^3\frac{\kappa_{\alpha+1}(x_{qk})}{\alpha!}\frac{\partial^{\alpha}}{\partial x_{qk}^{\alpha}}\Big((X^*\mG_1^s\bv_i)_{k}\mP^{l-1}\Big)+\mathcal{R}_{t,s}, \label{19071971}
\end{align}
where $\mathcal{R}_{t,s}$ satisfies

\begin{align}
|\mathcal{R}_{t,s}|\leq &\sn \sum_{q,k} \bigg(C_\ell\mathbb{E}(|x_{qk}|^{5}) \mathbb{E}\Big(\sup_{|x_{qk}|\leq c}\Big|y_{tq}  \frac{\partial^{4}}{\partial x_{qk}^{4}}\Big((X^*\mG_1^s\bv_i)_{k}\mP^{l-1}\Big)\Big| \Big) \nonumber\\
&+C_\ell\mathbb{E}\big(|x_{qk}|^{5}\mathbbm{1}(|x_{qk}|>c)\big) \mathbb{E}\Big(\sup_{x_{qk}\in \mathbb{R}}\Big|y_{tq}\frac{\partial^{4}}{\partial x_{qk}^{4}}\Big((X^*\mG_1^s\bv_i)_{k}\mP^{l-1}\Big)\Big|\Big) \bigg) \nonumber
\end{align}
for any  $c>0$ and any $C_\ell$ satisfying $C_\ell\leq (C\ell)^{\ell}/\ell!$ with some positive constant $C$.

By the product rule, we have 
\begin{align}
\frac{\partial^{\alpha}}{\partial x_{qk}^{\alpha}}\Big((X^*\mG_1^s\bv_i)_{k}\mP^{l-1}\Big)= \sum_{\begin{subarray}{c}\alpha_1,\alpha_2\geq 0\\\alpha_1+\alpha_2=\alpha\end{subarray}} {\alpha \choose \alpha_1}\frac{\partial^{\alpha_1}(X^*\mG_1^s\bv_i)_{k}}{\partial x_{qk}^{\alpha_1}}\frac{\partial^{\alpha_2}\mP^{l-1}}{\partial x_{qk}^{\alpha_2}}. \label{19071970}
\end{align}

In the sequel, for brevity, we set the notation
 \begin{align}
h_{t,s}(\alpha_1,\alpha_2):=\sn  \sum_{q,k}y_{tq}\frac{\kappa_{\alpha_1+\alpha_2+1}(x_{qk})}{(\alpha_1+\alpha_2)!}\binom{\alpha_1+\alpha_2}{\alpha_1}\frac{\partial^{\alpha_1}(X^*\mG_1^s\bv_i)_{k}}{\partial x_{qk}^{\alpha_1}}\frac{\partial^{\alpha_2}\mP^{l-1}}{\partial x_{qk}^{\alpha_2}}, \quad t,s=1,2. \label{rankrdefhts}
\end{align}
Note that $h_{t,s}(\alpha_1,\alpha_2)$  depends on   $l$ and $i$. However, we drop this dependence for brevity.

Using (\ref{19071970}) and the notation (\ref{rankrdefhts}) to (\ref{19071971}), we can now write
\begin{align}
\sn \mathbb{E}\mb{y}_t^*H\mG_1^{s}\bv_i\mP^{l-1}
&=\sum_{\begin{subarray}{c}\alpha_1,\alpha_2\geq 0\\1\leq \alpha_1+\alpha_2\leq 3\end{subarray}}\mathbb{E}h_{t,s}(\alpha_1,\alpha_2)+\mathcal{R}_{t,s} \label{rankrcumuexpansion^t}.
\end{align} 

In the sequel, we estimate $h_{t,s}(\alpha_1,\alpha_2)$  and the remainder terms $\mathcal R_{t,s}$ for $s, t=1,2$.   We collect the estimates  in the following lemma,  whose proof will be postponed to the end of this section.
\begin{lem}\label{rankrlemh_ts} Let $l$ be any fixed positive integer. 
With the convention $m_a^{(-1)}/(-1)!=1$ for $a=1,2$, we have the following estimates on $h_{t,s}(\alpha_1,\alpha_2)$  and $\mathcal{R}_{t,s}$ where $t,s=1,2$. 
\begin{itemize}
\item [(1):] For $h_{t,s}(\alpha_1,\alpha_2)$, the nonnegligible terms are
\begin{align}
&h_{t,s}(1,0)=-\sn\Big(z m_2 \mb{y}_t^*\mG_1^s\bv_i+\sum_{\begin{subarray}{c}1\le s_1,s_2\le s-1 \\s_1+s_2=s\end{subarray}} \frac{(zm_2)^{(s_1)}}{s_1!} \mb{y}_t^*\mG_1^{s_2}\bv_i\Big)\mP^{l-1}+O_\prec(N^{-\frac12}), \label{rankresthts(1,0)}\\
&h_{t,s}(0,1)=-(l-1)\sum_{b=1}^2(\mb{y}_b^*\mb{y}_t+\mb{y}_t^*\bv_i\mb{y}_b^*\bv_i)\sum_{\begin{subarray}{c} 0\le b_1,b_2 \le b-1\\b_1+b_2=b-1\end{subarray}}\frac{m_1^{(b_1)} (zm_1)^{(s+b_2)} }{b_1! (s+b_2)!}\mP^{l-2}+O_\prec(N^{-\frac12}) \label{rankresthts(0,1)},\\
&h_{t,s}(1,2)=-(l-1)\kappa_4\frac{(zm_2m_1)^{(s-1)}}{(s-1)!}
\sum_{b=0}^1 \mathbf{s}_{1,1,2} (\mb{y}_t,\mb{y}_{b+1},\bv_i)(zm_2m_1^2)^{(b)} \mP^{l-2}+O_\prec(N^{-\frac12}) \label{rankresthts(1,2)}.
\end{align}

\vspace{1ex}
\item[(2):]
Except for the terms in (\ref{rankresthts(1,0)})-(\ref{rankresthts(1,2)}), all the other  $h_{t,s}(\alpha_1, \alpha_2)$ terms with $\alpha_1+\alpha_2\le 3$ can be bounded by $O_\prec(N^{-\frac12})$.

\vspace{1ex}
\item [(3):] For the remainder terms, we have 
\begin{align}
\mathcal{R}_{t,s}=\onh. \label{19071980}
\end{align} 
\end{itemize}
\end{lem}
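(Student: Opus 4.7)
The plan is to compute each $h_{t,s}(\alpha_1,\alpha_2)$ by expanding the relevant derivatives via the formulas in Appendix~\ref{s.derivative of G}, replacing the resulting quadratic forms by their deterministic counterparts through the isotropic local law (Theorem~\ref{isotropic}), and simplifying using the Stieltjes identities \rf{identitym1m2}. The underlying power count is that $\kappa_{\alpha_1+\alpha_2+1}(x_{qk})$ carries a factor $N^{-(\alpha_1+\alpha_2+1)/2}$, the double sum $\sum_{q,k}$ contributes up to $N^2$, and each off-diagonal isotropic error saves a factor of order $N^{-1/2}$; tracking these balances determines which contractions survive at leading order.

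For $h_{t,s}(1,0)$, in which $\mP^{l-1}$ is not differentiated, I expand $\partial (X^*\mG_1^s\bv_i)_k/\partial x_{qk}$ using Lemma~\ref{lemrelation} and \rf{derivative}. Two index patterns arise: one closes the $q,k$-sum into $\frac{1}{N}\tr \mG_2^{s_1}\cdot \mb{y}_t^*\mG_1^{s_2}\bv_i$, whose trace is replaced via \rf{est_m12N} and whose derivatives in $z$ produce the scalars $(zm_2)^{(s_1-1)}/(s_1-1)!$ in \rf{rankresthts(1,0)}; the other pattern contains $X^*\mG_1^{s_1}$ applied to a deterministic vector and is $O_\prec(N^{-1/2})$ by \rf{est_XG}. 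For $h_{t,s}(0,1)$ the chain rule gives $(l-1)\mP^{l-2}\,\partial \mP/\partial x_{qk}$; since $\mP = \sn\sum_{b=1}^{2}\mb{y}_b^*\big(\mG_1^b - \frac{m_1^{(b-1)}}{(b-1)!}\big)\bv_i$ by \rf{rankrrepresentmP}, differentiating the $\mG_1^b$ factor and pairing with $y_{tq}(X^*\mG_1^s\bv_i)_k$ collapses the $q,k$-sums into the symmetric bilinear combination $\mb{y}_b^*\mb{y}_t + \mb{y}_t^*\bv_i\,\mb{y}_b^*\bv_i$ with scalar coefficients $m_1^{(b_1)}(zm_1)^{(s+b_2)}/(b_1!(s+b_2)!)$. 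For $h_{t,s}(1,2)$ only the $\kappa_4$ contribution survives, and because no index averaging here closes into a trace, the $q,k$-sums reduce to the entrywise quantities $\mathbf{s}_{1,1,2}(\mb{y}_t,\mb{y}_{b+1},\bv_i)$, while the accompanying Green-function derivatives simplify to $(zm_2m_1)^{(s-1)}(zm_2m_1^2)^{(b)}$ after repeated use of \rf{identitym1m2}.

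The remaining $(\alpha_1,\alpha_2)$ pairs with $\alpha_1+\alpha_2\le 3$ are handled by a case-by-case power count: whenever the derivative expansion produces a quadratic form that is neither a trace nor a closed diagonal sum, one of \rf{est_DG}, \rf{est_XG} supplies an extra $N^{-1/2}$ beyond the cumulant/sum balance. In particular, the $\kappa_3$ contributions at level $\alpha_1+\alpha_2=2$ have an odd number of free tensor factors, so they cannot simultaneously all close into traces and thus fall below the threshold $\onh$. To bound the remainder $\mathcal{R}_{t,s}$ from Lemma~\ref{cumulantexpansion}, I use the a priori deterministic bounds \rf{trivialbd_G1}--\rf{trivialbd_XG1X}: since $\Im z = N^{-K}$, the fourth derivative of the integrand is bounded polynomially in $N^{K}$, while $\mbE|\sn x_{qk}|^5 \le C_5$ and the tail estimate on $\{|x_{qk}|>s\}$ (with $s=N^{\epsilon-1/2}$) follow from the arbitrary-moment assumption; combining these yields $\mathcal{R}_{t,s}=\onh$ after choosing $K$ fixed but sufficiently large.

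The main obstacle will be combinatorial bookkeeping: enumerating all Leibniz terms in the derivative expansions, sorting them by which $(q,k)$-contractions collapse into traces (producing a saving factor of $N$) versus which remain off-diagonal (producing a factor $O_\prec(N^{-1/2})$), and matching the surviving scalar coefficients with the compact expressions in \rf{rankresthts(1,0)}--\rf{rankresthts(1,2)} via \rf{identitym1m2}. A secondary subtlety is treating $s=1$ and $s=2$ uniformly so that the subtraction $m_1^{(t-1)}/(t-1)!$ built into $\mP$ exactly cancels the ``boundary'' terms $s_2=0$ that would otherwise appear in \rf{rankresthts(1,0)}.
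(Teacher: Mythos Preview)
Your plan for parts~(1) and~(2) is essentially the paper's own route: expand the derivatives via the formulas in Appendix~\ref{s.derivative of G}, classify the resulting terms according to whether the $(q,k)$-sums close into (normalized) traces or leave an off-diagonal factor of the form $(X^*\mG_1^a\mb\eta)_k$, and feed the latter into \rf{est_XG} for an extra $N^{-1/2}$. The bookkeeping you anticipate is exactly what the paper carries out term by term, including the use of Lemma~\ref{Importantests} to control sums like $\sum_q|y_{tq}(\mG_1^{s_2}\bv_i)_q|$ and $\sum_k((X^*\mG_1^{s_1}X)_{kk})^a(X^*\mG_1^{s_3}\bv_i)_k$.

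Your treatment of the remainder $\mathcal R_{t,s}$, however, has a real gap. The deterministic bounds \rf{trivialbd_G1}--\rf{trivialbd_XG1X} give $|\mG_1^s|\le (\Im z)^{-s}=N^{Ks}$, so the fourth derivative is bounded by a power $N^{CK}$ that \emph{grows} with $K$. Feeding this into the first line of \rf{remaining} (the $\sup_{|t|\le s}$ part) together with $\mbE|x_{qk}|^5\le C_5N^{-5/2}$ yields $\sqrt N\sum_{q,k}N^{-5/2}N^{CK}=N^{CK}$, which is useless; taking $K$ large makes this worse, not better. The paper handles the two pieces of \rf{remaining} differently. For the tail piece $\sup_{x_{qk}\in\mathbb R}$ it does use the crude deterministic bound, but compensates with the fact that $\mbE\big(|x_{qk}|^5\mathbbm 1(|x_{qk}|>N^{-1/2+\epsilon})\big)\le N^{-\wt K}$ for arbitrarily large $\wt K$ by the moment assumption. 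For the main piece $\sup_{|x_{qk}|\le N^{-1/2+\epsilon}}$, the deterministic bound is unavailable and one must instead argue, via a resolvent-expansion perturbation, that replacing a single entry $x_{qk}$ of $X$ by any deterministic value of size $O(N^{-1/2+\epsilon})$ leaves the isotropic local law intact; this lets one recycle the estimate \eqref{eq:bdh4'} for the perturbed matrix and obtain $\onh$. Without this perturbation step your bound on $\mathcal R_{t,s}$ does not close.
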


Now we show the proof of Proposition \ref{recursivemP},  based on Lemma \ref{rankrlemh_ts}.
\begin{proof}[Proof of Proposition \ref{recursivemP}]
First we show the proof of \eqref{est_EmP}.
Using  Lemma \ref{rankrlemh_ts}  with $l=1$, we can rewrite \eqref{rankrcumuexpansion^t} as
\begin{align}
\sn \mathbb{E}\mb{y}_t^*H\mG_1^{t}\bv_i=\mathbb{E}h_{t,t}(1,0)+O_\prec(N^{-\frac12}). \nonumber
\end{align}
Plugging  the above estimate into \eqref{rankrEmP^l} with $l=1$, we obtain 
\begin{align}
\mathbb{E}\mP=&-m_1\Big(\mathbb{E}h_{1,1}(1,0)+\sn zm_2\mathbb{E}\mb{y}_1^*\mG_1\bv_i+\mathbb{E}h_{2,2}(1,0)+\sn zm_2\mathbb{E}\mb{y}_2^*\mG_1^2\bv_i \nonumber\\
&-\sn \mathbb{E}\mb{y}_2^*\mG_1\bv_i+\sn \frac{m_1'}{m_1}\mb{y}_2^*\bv_i\Big)+O_\prec(N^{-\frac12}). \label{EmP}
\end{align}
Applying \eqref{rankresthts(1,0)} with $l=1$, we have,
\begin{align*}
&h_{1,1}(1,0)=-\sn zm_2\mb{y}_1^*\mG_1\bv_i+\onh,\\
&h_{2,2}(1,0)=-\sn zm_2\mb{y}_2^*\mG_1^2\bv_i-\sn (zm_2)'\mb{y}_2^*\mG_1\bv_i+\onh.
\end{align*}
We substitute the above two estimates into \eqref{EmP} and get
\begin{align}
\mathbb{E}\mP=\frac{m_1'}{m_1}\sn\mathbb{E}\mb{y}_2^*(\mG_1-m_1)\bv_i+\onh, \label{19071975}
\end{align}
where we used the last equation in (\ref{identitym1m2}). Applying the similar arguments as   \eqref{rankrEmP^l} and \eqref{rankrcumuexpansion^t} to the RHS of (\ref{19071975}), for $l=1$, we will get 
\begin{align*}
\mathbb{E}\mP&=-m_1'\sn\mathbb{E}\mb{y}_2^*(H\mG_1+zm_2\mG_1)\bv_i+\onh\\
&=-m_1'\big(\mathbb{E}h_{2,1}(1,0)+\sn zm_2\mathbb{E}\mb{y}_2^*\mG_1\bv_i\big)+\onh=\onh,
\end{align*}
where the last step follows from (\ref{rankresthts(1,0)}) with $(t,s)=(2,1)$. This proves (\ref{est_EmP}).

Next we turn to prove \eqref{est_EmPk}. By (\ref{rankrcumuexpansion^t}) and  Lemma \ref{rankrlemh_ts}, 
 we observe that 
\begin{align}
\sn \mathbb{E}\mb{y}_t^*H\mG_1^{t}\bv_i\mP^{l-1}
=\mathbb{E} h_{t,t}(1,0) + \mathbb{E} h_{t,t}(0,1) + \mathbb{E} h_{t,t}(1,2) +O_\prec(N^{-\frac12}), \qquad t=1,2. \label{19072402}
\end{align}
Further, using \eqref{rankresthts(1,0)} to the first term in the RHS of (\ref{19072402}), we see
\begin{align}
\sn \mathbb{E}\mb{y}_1^*H\mG_1\bv_i\mP^{l-1}
&= -zm_2 \sqrt{N} \mathbb{E}\mb{y}_1^*\mG_1 \bv_i \mathcal P^{l-1} + \mathbb{E} h_{1,1}(0,1) + \mathbb{E} h_{1,1}(1,2) +O_\prec(N^{-\frac12}) \label{19071981}
\end{align}
and
\begin{align}
\sn \mathbb{E}\mb{y}_2^*H\mG_1^2 \bv_i\mP^{l-1}
=& -zm_2 \sqrt{N} \mathbb{E}\mb{y}_2^*\mG_1^2 \bv_i \mathcal P^{l-1} -\sqrt{N} (z m_2)' \mathbb{E}\mb{y}_2^*\mG_1 \bv_i \mathcal P^{l-1} \nonumber\\
&+ \mathbb{E} h_{2,2}(0,1) + \mathbb{E} h_{2,2}(1,2) +O_\prec(N^{-\frac12}). \label{19071982}
\end{align}
Plugging (\ref{19071980}), (\ref{19071981}) and (\ref{19071982})
into \eqref{rankrEmP^l}, one gets
\begin{align}
\mathbb{E}(\mP^l)=&-m_1\Big(\mathbb{E}h_{1,1}(0,1)+\mathbb{E}h_{1,1}(1,2)+\mathbb{E}h_{2,2}(0,1)+\mathbb{E}h_{2,2}(1,2)\Big) \nonumber\\
&-\sqrt N \left((zm_2)'+1 \right) \mathbb{E} \mb{y}_2^* \mG_1 \bv_i \mathcal P^{l-1} + \frac{m_1'}{m_1} \sqrt N \mb{y}_2^* \bv_i \mathbb{E} \mathcal P^{l-1}  +O_\prec(N^{-\frac{1}{2}}). \label{19071983}
\end{align}
Using the last equation of (\ref{identitym1m2}), we further obtain from (\ref{19071983}) that
\begin{align}
\mathbb{E}(\mP^l)=&-m_1\Big(\mathbb{E}h_{1,1}(0,1)+\mathbb{E}h_{1,1}(1,2)+\mathbb{E}h_{2,2}(0,1)+\mathbb{E}h_{2,2}(1,2)\Big)\nonumber\\
&+\frac{m_1'}{m_1}\mathbb{E}\mb{y}_2^*(\mG_1-m_1)\bv_i\mP^{l-1}+O_\prec(N^{-\frac{1}{2}}). \label{19071989}
\end{align}
Similarly to (\ref{rankrEmP^l}), by using the first identity in (\ref{identitym1m2}), one can write 
\begin{align*}
\mG_1-m_1=-m_1H\mG_1-zm_1 m_2 \mG_1.
\end{align*}
Then,  using Lemma \ref{rankrlemh_ts}, we  get
\begin{align}
\frac{m_1'}{m_1}\mathbb{E}\mb{y}_2^*(\mG_1-m_1)\bv_i\mP^{l-1} &= -m_1' \mathbb{E} \sn \mb{y}_2^*\big( H\mG_1+z m_2 \mG_1\big)\bv_i\mP^{l-1} \nonumber\\
&=-m_1'\Big(\mathbb{E}h_{2,1}(0,1)+\mathbb{E}h_{2,1}(1,2)\Big)+O_\prec(N^{-\frac{1}{2}}). \label{19071988}
\end{align}
Plugging (\ref{19071988}) into (\ref{19071989}) yields 
\begin{align*}
\mathbb{E}(\mP^l)=&-m_1\Big(\mathbb{E}h_{1,1}(0,1)+\mathbb{E}h_{1,1}(1,2)+\mathbb{E}h_{2,2}(0,1)+\mathbb{E}h_{2,2}(1,2)\Big)\nonumber\\
&-m_1'\Big(\mathbb{E}h_{2,1}(0,1)+\mathbb{E}h_{2,1}(1,2)\Big)+O_\prec(N^{-\frac{1}{2}}).
\end{align*}
It remains to compute the explicit formula for the RHS of the above equation. First, using \rf{rankresthts(0,1)}, we get
\begin{align*}
&-m_1\Big(\mathbb{E}h_{1,1}(0,1)+\mathbb{E} h_{2,2}(0,1)\Big)-m_1' \mathbb{E}h_{2,1}(0,1)\nonumber\\
&=(l-1)\Big((\mb{y}_1^*\mb{y}_1+(\mb{y}_1^*\bv_i)^2)m_1^2(zm_1)'+(\mb{y}_2^*\mb{y}_1+\mb{y}_1^*\bv_i\mb{y}_2^*\bv_i) \big(m_1^2(zm_1)'\big)' \nonumber\\
&\quad+(\mb{y}_2^*\mb{y}_2+(\mb{y}_2^*\bv_i)^2)\big(m_1m_1'(zm_1)''+(m_1')^2(zm_1)'+\frac{1}{6}m_1^2(zm_1)'''\big) \Big) \mathbb{E} \mathcal P^{l-2}.
\end{align*}
Recall the definitions for $\mb{y}_{1}$ and $\mb{y}_2$ in \rf{defy12} and the matrix $\mathcal M_i$ in \eqref{def:Mdiag} and \eqref{def:Moff}.  By elementary calculation, we arrive at
\begin{align}\label{ViI}
&-m_1\Big(\mathbb{E}h_{1,1}(0,1)+\mathbb{E} h_{2,2}(0,1)\Big)-m_1' \mathbb{E}h_{2,1}(0,1)\nonumber\\
&=(l-1) \Big(\sum_{j=1}^{r+2}\mathcal{M}_i(j,j)c_j^2+2\mathcal{M}_i(i,r+2)c_ic_{r+2}\Big)\mathbb{E} \mathcal P^{l-2} \nonumber\\
&= (l-1) \mathbf{c}^* \mathcal M_i \mathbf{c} \mathbb{E} \mathcal P^{l-2}.
\end{align}
Next, by \eqref{rankresthts(1,2)}, we have
\begin{align*}
&-m_1\big(\mathbb{E}h_{1,1}(1,2)+\mathbb{E}h_{2,2}(1,2)\big)-m_1' \mathbb{E}h_{2,1}(1,2)\nonumber\\
&=(l-1) \kappa_4 \Big( \mathbf{s}_{2,2}(\mb{y}_1,\bv_i)(zm_2m_1^2)^2 + \mathbf{s}_{2,2}(\mb{y}_2,\bv_i)\big((zm_2m_1^2)'\big)^2 \\
&\qquad + 2\mathbf{s}_{1,1,2}(\mb{y}_1,\mb{y}_2,\bv_i)(zm_2m_1^2)(zm_2m_1^2)'  \Big) \mathbb{E} \mathcal P^{l-2},
\end{align*}
which, by the definitions of $\mb{y}_{1}$ and $\mb{y}_2$ in \rf{defy12} and the matrix $\mathcal K$ in \eqref{def:K}, can be simplified to
\begin{align}\label{ViJ}
&-m_1\Big(\mathbb{E}h_{1,1}(1,2)+\mathbb{E}h_{2,2}(1,2)\Big)-m_1' \mathbb{E}h_{2,1}(1,2)\nonumber\\
&=(l-1)\kappa_4\Big(\sum_{j=1}^{r+2}\mathcal{K}_i(j,j)c_j^2+\sum_{1\le j\neq k\le r+2} 2\mathcal{K}_i (j,k) c_{j}c_{k} \Big) \mathbb{E} \mathcal P^{l-2}\nonumber\\
&=(l-1) \kappa_4 \mathbf{c}^* \mathcal K_i \mathbf{c} \mathbb{E} \mathcal P^{l-2}.
\end{align}
Combining \rf{ViI} and $\rf{ViJ}$, we complete the proof of  \eqref{est_EmPk} in Proposition \ref{recursivemP}. Hence, we conclude the proof of  Proposition \ref{recursivemP}. 
\end{proof}
The rest of this section is devoted to the proof of Lemma \ref{rankrlemh_ts}. It is convenient to first introduce the next lemma, which will be used  to control the negligible terms in the proof of Lemma \ref{rankrlemh_ts}.
\begin{lem}\label{Importantests}
For a fixed integer $n\geq 1$, let $\mb{\eta}_i=(\eta_{i1},\ldots, \eta_{iM})^T\in \mathbb{C}^M, i\in \lb 0, n\rb$ be any given deterministic vectors with $\max_i\|\mathbf{\eta}_i\|\leq C$ for some positive constant $C$.  For any positive integers $s_0,s_1,\cdots,s_n$ and $a=0,1$, we have the following estimates:
\begin{align}
&\sum_{q}\Big|\eta_{0q}\big((\mG_1^{s_0})_{qq}\big)^a\Big(\prod_{t=1}^n(\mG_1^{s_t}\mb{\eta}_t)_q\Big)\Big|=O_\prec(1)\label{Importestsum1},\\
&\Big|\sum_{k}\big((X^*\mG_1^{s_0}X)_{kk}\big)^a(X^*\mG_1^{s_1}\mb{\eta}_1)_{k}\Big|=O_\prec(1). \label{Importestsum2}
\end{align}
\end{lem}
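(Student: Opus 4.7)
The plan is to establish both bounds using the isotropic local law (Theorem~\ref{isotropic}) together with basic Cauchy--Schwarz, exploiting the fact that the parameter $z$ chosen in \eqref{19072401} lies in $\mathscr{D}$, so that the operator-norm bounds on $\mG_1$ and $\mG_2$ noted in Remark~\ref{boundrmk} hold with high probability. A preliminary observation, used throughout, is that Theorem~\ref{isotropic} applied to $\bu=\mathbf{e}_q$ (or $\mathbf{e}_k$) and $\bv=\mathbf{\eta}_t/\|\mathbf{\eta}_t\|$ yields the pointwise estimates
\begin{align*}
\max_{q}|(\mG_1^{s_t}\mathbf{\eta}_t)_q|=O_\prec(1),\qquad \max_{q}|(\mG_1^{s_0})_{qq}|=O_\prec(1),\qquad \max_{k}|(\mG_2^{s})_{kk}|=O_\prec(1),
\end{align*}
where the maxima are extended via a union bound over at most $N$ indices, using that stochastic domination is uniform in the parameter set $U_N$.

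For \eqref{Importestsum1}, I would pull the diagonal factor $|(\mG_1^{s_0})_{qq}|^{a}$ out of the sum as a uniform $O_\prec(1)$ bound. When $n\ge 2$, I would then bound $|(\mG_1^{s_t}\mathbf{\eta}_t)_q|=O_\prec(1)$ uniformly in $q$ for each $t\ge 2$, reducing to the case $n=1$. The remaining case is handled by Cauchy--Schwarz:
\begin{align*}
\sum_{q}|\eta_{0q}(\mG_1^{s_1}\mathbf{\eta}_1)_q|\le \|\mathbf{\eta}_0\|\,\|\mG_1^{s_1}\mathbf{\eta}_1\|=O(1),
\end{align*}
where $\|\mathbf{\eta}_0\|\le C$ by hypothesis and the second factor is controlled by the deterministic bound \eqref{trivialbd_G1}.

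For \eqref{Importestsum2}, I would rewrite the sum as an inner product,
\begin{align*}
\sum_k \big((X^*\mG_1^{s_0}X)_{kk}\big)^a (X^*\mG_1^{s_1}\mathbf{\eta}_1)_k=\langle \mathbf{b},X^*\mG_1^{s_1}\mathbf{\eta}_1\rangle,\qquad b_k:=\big((X^*\mG_1^{s_0}X)_{kk}\big)^a.
\end{align*}
The identity \eqref{relationXG} converts $(X^*\mG_1^{s_0}X)_{kk}$ into diagonal entries of $\mG_2^{s_0-1}+z\mG_2^{s_0}$, so the isotropic local law for $\mG_2$ applied at $\mathbf{e}_k$ gives $b_k=c+\rho_k$ uniformly in $k$, where $c$ is an explicit deterministic constant depending only on $m_2$, $z$ and $s_0$ (with $c=1$ when $a=0$) and $\max_k|\rho_k|=O_\prec(N^{-1/2})$. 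Decomposing $\mathbf{b}=c\,\mathbf{1}+\mathbf{r}$, we get $\|\mathbf{r}\|^2=\sum_k|\rho_k|^2=O_\prec(1)$, so the main contribution is
\begin{align*}
|c\,\langle \mathbf{1},X^*\mG_1^{s_1}\mathbf{\eta}_1\rangle|=|c|\sqrt{N}\,\big|\langle \mathbf{1}/\sqrt{N},X^*\mG_1^{s_1}\mathbf{\eta}_1\rangle\big|=O_\prec(1),
\end{align*}
by \eqref{est_XG} applied to the unit vector $\mathbf{1}/\sqrt{N}$, while the remainder is $|\langle \mathbf{r},X^*\mG_1^{s_1}\mathbf{\eta}_1\rangle|\le \|\mathbf{r}\|\,\|X^*\mG_1^{s_1}\mathbf{\eta}_1\|=O_\prec(1)$ by the deterministic bound \eqref{trivialbd_XG1}.

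The only genuinely delicate point I expect is part (b): one must correctly extract the deterministic skeleton $c\,\mathbf{1}$ inside the random vector $\mathbf{b}$ so that the potentially large factor $\sqrt{N}$ appearing when one writes the sum against $\mathbf{1}$ is absorbed by the additional $N^{-1/2}$ decay of \eqref{est_XG} applied at $\mathbf{1}/\sqrt{N}$; a naive Cauchy--Schwarz on $\sum_k|(X^*\mG_1^{s_1}\mathbf{\eta}_1)_k|$ would only produce an $O(\sqrt{N})$ bound and fails. Everything else in the argument is routine bookkeeping of the uniform-in-index isotropic estimates and operator-norm bounds.
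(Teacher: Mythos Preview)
Your strategy is correct and coincides with the paper's: for \eqref{Importestsum1} one bounds the extra Green-function factors by uniform $O_\prec(1)$ entrywise (via the isotropic local law) and closes with Cauchy--Schwarz, and for \eqref{Importestsum2} one extracts the constant skeleton $c\,\mathbf{1}$ from $\mathbf{b}$ using \eqref{relationXG} and the local law for $\mG_2$, then applies \eqref{est_XG} to the unit vector $\mathbf{1}/\sqrt{N}$ for the main term and an $\ell^2$ bound for the remainder. This is exactly what the paper does.

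There is one genuine slip in your write-up: you invoke the \emph{deterministic} bounds \eqref{trivialbd_G1} and \eqref{trivialbd_XG1} to control $\|\mG_1^{s_1}\mathbf{\eta}_1\|$ and $\|X^*\mG_1^{s_1}\mathbf{\eta}_1\|$. For the $z$ of \eqref{19072401} one has $\Im z=N^{-K}$, so those bounds give $N^{Ks_1}$, not $O(1)$, and the Cauchy--Schwarz steps would collapse. What you actually need (and what you yourself mention in your opening paragraph) is the high-probability operator-norm bound $\|\mG_1\|_{\mathrm{op}}\le C$ from Remark~\ref{boundrmk}, which comes from rigidity \eqref{190726100} and the fact that $\Re z=\theta(d_i)$ stays a fixed distance above $\lambda_+$. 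With that reference corrected, your argument is complete and matches the paper's proof.
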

\begin{proof}[Proof of Lemma \ref{Importantests}]
The first estimate \rf{Importestsum1} can be proved by \eqref{eq:basicG} and the isotropic local law \eqref{est_DG} as follows: 
\begin{align*}
&\sum_{q}\Big|\eta_{0q}\big((\mG_1^{s_0})_{qq}\big)^a\Big(\prod_{t=1}^n(\mG_1^{s_t}\mb{\eta}_t)_q\Big)\Big|=\sum_{q}\Big|\eta_{0q}\big((\mG_1^{s_0})_{qq}\big)^a\Big(\prod_{t=1}^n\langle \mathbf{e}_q, \frac{\mG_1^{(s_t-1)}}{(s_t-1)!}\mb{\eta}_t \rangle\Big)\Big|\\
&=\sum_q\Big|\eta_{0q}\Big(\prod_{t=1}^n\eta_{tq}\Big)\Big(\Big(\frac{m_1^{(s_0-1)}}{(s_0-1)!}+O_\prec\big(\frac{1}{\sn}\big)\Big)^a\prod_{t=1}^n\Big(\frac{m_1^{(s_t-1)}}{(s_t-1)!}+O_\prec\big(\frac{1}{\sn}\big)\Big)\Big)\Big|=O_\prec(1).
\end{align*}
To prove \rf{Importestsum2}, we notice that by \rf{est_XG},
\begin{align*}
\Big|\sum_{k}(X^*\mG_1^{s_1}\mb{\eta}_1)_{k}\Big|=\sn \mb{1}_N^*X^*\mG_1^{s_1}\mb{\eta}_1=O_\prec(1),
\end{align*}
where $\sqrt{N}\mb{1}_N\in \mathbb{R}^M$ is the all-ones vector. Further,  by \rf{relationXG} and the isotropic local law Theorem \ref{isotropic}, we get
\begin{align*}
&\Big|\sum_{k}\big((X^*\mG_1^{s_0}X)_{kk}\big)^a(X^*\mG_1^{s_1}\mb{\eta}_1)_{k}\Big|=\Big|\sum_{k}\big((\mG_2^{s_0-1}+z \mG_2^{s_0} )_{kk}\big)^a(X^*\mG_1^{s_1}\mb{\eta}_1)_{k}\Big|\\
&=\Big|\sum_{k}\Big(\frac{m_2^{(s_0-2)}}{(s_0-2)!}+\frac{zm_2^{(s_0-1)}}{(s_0-1)!}+O_\prec\big(N^{-\frac{1}{2}}\big)\Big)^a (X^*\mG_1^{s_1}\mb{\eta}_1)_{k}\Big|=O_\prec(1),\quad a=0,1
\end{align*}
with the convention $m_2^{(-1)}/(-1)!=1$.
\end{proof}

With Lemma \ref{Importantests}, we can now prove Lemma \ref{rankrlemh_ts}.
\begin{proof} [Proof of Lemma \ref{rankrlemh_ts}] In this proof, we fix $l$ and $i$. 
First, we compute $h_{t,s}(1,0)$ for $t,s=1,2$. Recall the definition in \rf{rankrdefhts}. By \rf{deriXG^l}, we have
\begin{align*}
h_{t,s}(1,0)&=N^{-\frac{1}{2}}\sum_{q,k}y_{tq}\frac{\partial (X^*\mG_1^s\bv_i)_{k}}{\partial x_{qk}}\mP^{l-1}\\
&= N^{-\frac{1}{2}}\sum_{q,k} y_{tq} \Big( (\mG_1^s\bv_i)_{q}-\sum_{a=1}^2\sum_{\begin{subarray}{c}s_1,s_2\geq 1\\ s_1+s_2=s+1 \end{subarray}}(X^*\mG_1^{s_1}\mathscr{P}_a^{qk}\mG_1^{s_2}\bv_i)_{k} \Big) \mP^{l-1},
\end{align*}
where $\mathscr{P}_a^{qk}, a=1,2$ are defined in \rf{P012}. Taking the sum over $q,k$, we get 
\begin{align}
h_{t,s}(1,0)
&=\sqrt N \mb{y}_t^*\mG_1^s\bv_i - N^{-\frac12} \sum_{\begin{subarray}{c}s_1,s_2\geq 1\\s_1+s_2=s+1\end{subarray}} \Big(\mb{y}_t^* \mG_1^{s_1} X X^* \mG_1^{s_2} \bv_i + \mb{y}_t^* \mG_1^{s_2} \bv_i \; \text{Tr}(X^* \mG_1^{s_1} X)\Big)\mP^{l-1}\label{19072011}.
\end{align}
By the identity in \eqref{relationXG},  Theorem \ref{isotropic}, and the fact $|\mP|\prec 1$, we further get 
\begin{align}
&N^{-\frac12} \sum_{\begin{subarray}{c}s_1,s_2\geq 1\\s_1+s_2=s+1\end{subarray}} \mb{y}_t^* \mG_1^{s_1} X X^* \mG_1^{s_2} \bv_i \mP^{l-1} \nonumber \\
& = N^{-\frac12} \sum_{\begin{subarray}{c}s_1,s_2\geq 1\\s_1+s_2=s+1\end{subarray}} \mb{y}_t^* (\mG_1^{s_1+s_2-1} + z \mG_1^{s_1+s_2}) \bv_i \mP^{l-1}=O_\prec(N^{-\frac12}). \label{19072010}
\end{align}
Substituting (\ref{19072010}) into (\ref{19072011}), we get 
\begin{align}
h_{t,s}(1,0)&=\sqrt N \mb{y}_t^*\mG_1^s\bv_i - \sqrt N \sum_{\begin{subarray}{c}s_1,s_2\geq 1\\s_1+s_2=s+1\end{subarray}} \mb{y}_t^* \mG_1^{s_2} \bv_i \frac{1}{N} \text{Tr}(X^* \mG_1^{s_1} X)\mP^{l-1} + O_\prec(N^{-\frac12}).  \label{19072015}
\end{align}
Using the second identity in (\ref{relationXG}), and Theorem \ref{isotropic}, we have 
\begin{align}
\frac{1}{N}\text{Tr}(X^* \mG_1^{s_1} X)= \frac{1}{N}\text{Tr}(\mG_2^{s_1-1} + z \mG_2^{s_1})=\frac{m_2^{(s_1-2)}}{(s_1-2)!}+\frac{zm_2^{(s_1-1)}}{(s_1-1)!}+O_\prec(N^{-1}). 
\end{align}
Then we  obtain from (\ref{19072015}) that
\begin{align}
h_{t,s}(1,0)=&-\sn z m_2 \mb{y}_t^*\mG_1^s\bv_i \mP^{l-1}  \nonumber\\
&-\sn \sum_{\begin{subarray}{c}2\le s_1\le s, 1\le s_2\le s-1\\s_1+s_2=s+1\end{subarray}}\Big(\frac{m_2^{(s_1-2)}}{(s_1-2)!}+\frac{zm_2^{(s_1-1)}}{(s_1-1)!}\Big)\mb{y}_t^*\mG_1^{s_2}\bv_i\mP^{l-1}+O_\prec(N^{-\frac12}). \label{19072020}
\end{align}
Further by the simple fact
\begin{align}\label{eq:simple}
\frac{z m_a^{(l)}}{l!} + \frac{m_a^{(l-1)}}{(l-1)!} = \frac{1}{l!} (z m_a)^{(l)}, \qquad a=1,2,
\end{align}
we can conclude the proof of \eqref{rankresthts(1,0)} from (\ref{19072020}).

Next, we turn to estimate $h_{t,s}(0,1)$, which by the definition in \rf{rankrdefhts} reads
\begin{align}
h_{t,s}(0,1)&=(l-1)N^{-\frac{1}{2}}\sum_{q,k}y_{tq}(X^*\mG_1^s\bv_i)_{k}\frac{\partial \mP}{\partial x_{qk}}\mP^{l-2}.
\label{rankrhts(1,0)}
\end{align}
Using the formula \rf{eq:Pd1} to \rf{rankrhts(1,0)}, we get
\begin{align} 
h_{t,s}(0,1)
&=-(l-1)\sum_{b=1}^2\sum_{\begin{subarray}{c}b_1,b_2\geq 1\\b_1+b_2=b+1\end{subarray}}\Big((\mb{y}_b^*\mG_1^{b_1}\mb{y}_t)(\bv_i^*\mG_1^{b_2}XX^*\mG_1^{s}\bv_i)\nonumber\\
&\qquad\qquad\qquad\qquad\qquad+(\mb{y}_t^*\mG_1^{b_2}\bv_i)(\mb{y}_b^*\mG_1^{b_1}XX^*\mG_1^{s}\bv_i)\Big)\mP^{l-2}. \label{17092027}
\end{align}
By \eqref{relationXG}, we have
\begin{align*}
\mG_1^{b_a}XX^*\mG_1^{s} = \mG_1^{b_a+s-1} + z \mG_1^{b_a+s}, \qquad a=1,2.
\end{align*}
Hence, by  \eqref{est_DG} and the bound $|\mP|\prec 1$, one gets from (\ref{17092027}) that
\begin{align}
h_{t,s}(0,1)&=-(l-1)\sum_{b=1}^2\sum_{\begin{subarray}{c}b_1,b_2\geq 1\\b_1+b_2=b+1\end{subarray}}\bigg( \frac{m_1^{(b_1-1)}}{(b_1-1)!} \Big(\frac{m_1^{(s+b_2-2)}}{(s+b_2-2)!}+\frac{zm_1^{(s+b_2-1)}}{(s+b_2-1)!}\Big) \mb{y}_b^* \mb{y}_t \nonumber\\
&\quad + \frac{m_1^{(b_2-1)}}{(b_2-1)!} \Big(\frac{m_1^{(s+b_1-2)}}{(s+b_1-2)!}+\frac{zm_1^{(s+b_1-1)}}{(s+b_1-1)!}\Big)\mb{y}_t^*\bv_i\mb{y}_b^*\bv_i \bigg)\mP^{l-2}+O_\prec(N^{-\frac12}). \label{19072029}
\end{align}
Using \eqref{eq:simple} to (\ref{19072029}), we can conclude the proof of \rf{rankresthts(0,1)}.

Next, we show \rf{rankresthts(1,2)}. First,  by the definition in (\ref{rankrdefhts}), we have 
 \begin{align} 
 h_{t,s}(1,2)
 &=\frac{\kappa_4}{2N^{\frac{3}{2}}}\sum_{q,k}y_{tq}\frac{\partial (X^*\mG_1^s\bv_i)_{k}}{\partial x_{qk}}\Big((l-1) \frac{\partial^2 \mP}{\partial x_{qk}^2}\mP^{l-2} + (l-1)(l-2)\Big(\frac{\partial \mP}{\partial x_{qk}}\Big)^2\mP^{l-3} \Big)\nonumber\\
 &=: \kappa_4(l-1)\mathcal J_1 +\kappa_4(l-1)(l-2) \mathcal J_2, 
  \label{17092053}
 \end{align} 
 where $\kappa_4(l-1)\mathcal{J}_1$ and $\kappa_4(l-1)(l-2)\mathcal{J}_2$ correspond to the sum involving the first and the second terms in the parenthesis in the first step, respectively. 
 In the following, we shall give the  estimates of $\mathcal J_1$ and  $\mathcal J_2$.
  
To estimate $\mathcal J_1$, using
 (\ref{eq:Pd2}) and \eqref{deriXG^l},  
we see
  \begin{align}
 \mathcal J_1&=
 \frac{ 1}{2N^{\frac{3}{2}}}\sum_{q,k}y_{tq}\frac{\partial (X^*\mG_1^s\bv_i)_{k}}{\partial x_{qk}} \frac{\partial^2 \mP}{\partial x_{qk}^2}\mP^{l-2} \nonumber\\
 &=\frac{1}{N}\sum_{q,k}y_{tq} \Big((\mG_1^s\bv_i)_{q}-\sum_{a=1}^2\sum_{\begin{subarray}{c}s_1,s_2\geq 1\\ s_1+s_2=s+1 \end{subarray}}(X^*\mG_1^{s_1}\mathscr{P}_a^{qk}\mG_1^{s_2}\bv_i)_{k} \Big) \nonumber\\
 &\quad \times \sum_{b=1}^2\mb{y}_b^*\Big(\sum_{a_1,a_2=1}^2\sum_{\begin{subarray}{c}b_1,b_2,b_3\geq 1\\b_1+b_2+b_3=b+2\end{subarray}}\mG_1^{b_1}\mathscr{P}_{a_1}^{qk}\mG_1^{b_2}\mathscr{P}_{a_2}^{qk}\mG_1^{b_3}-\sum_{\begin{subarray}{c}b_1,b_2\geq 1\\b_1+b_2=b+1\end{subarray}}\mG_1^{b_1}\mathscr{P}_0^{qk}\mG_1^{b_2}\Big)\bv_i\mP^{l-2}. \label{17092030}
 \end{align} 
Further, we claim that
\begin{align}
 \mathcal J_1&=\frac{1}{N}\sum_{q,k}y_{tq}\Big((\mG_1^s\bv_i)_{q}-\sum_{\begin{subarray}{c}s_1,s_2\geq 1\\ s_1+s_2=s+1 \end{subarray}}(X^*\mG_1^{s_1}\mathscr{P}_2^{qk}\mG_1^{s_2}\bv_i)_{k} \Big) \nonumber\\
 &\quad \times \sum_{b=1}^2\mb{y}_b^*\Big(\sum_{\begin{subarray}{c}b_1,b_2,b_3\geq 1\\b_1+b_2+b_3=b+2\end{subarray}}\mG_1^{b_1}\mathscr{P}_{1}^{qk}\mG_1^{b_2}\mathscr{P}_{2}^{qk}\mG_1^{b_3}-\sum_{\begin{subarray}{c}b_1,b_2\geq 1\\b_1+b_2=b+1\end{subarray}}\mG_1^{b_1}\mathscr{P}_0^{qk}\mG_1^{b_2}\Big)\bv_i\mP^{l-2} + O_\prec(N^{-\frac12}). \label{17092031}
\end{align}
To see the reduction from (\ref{17092030}) to (\ref{17092031}), we notice that the terms absorbed in $O_\prec(N^{-\frac12})$ 
 always contain some quadratic forms of $X^*\mG_1^{a}$ as a factor, for some $a\ge1$. Then the isotropic local law \eqref{est_XG} can be applied to show that these terms are bounded by $O_\prec(N^{-\frac12})$. For instance,  we have 
\begin{align*}
&N^{-\frac32}\sum_{q,k}y_{tq}  \Big(\sum_{\begin{subarray}{c}s_1,s_2\geq 1\\ s_1+s_2=s+1 \end{subarray}}(X^*\mG_1^{s_1}\mathscr{P}_1\mG_1^{s_2}\bv_i)_{k} \Big)  \frac{\partial^2 \mP}{\partial x_{qk}^2}\mP^{l-2}\\
&\prec N^{-\frac32}\bigg| \sum_{q,k}\sum_{\begin{subarray}{c}s_1,s_2\geq 1\\ s_1+s_2=s+1 \end{subarray}} y_{tq}  (X^*\mG_1^{s_1})_{kq} (X^*\mG_1^{s_2}\bv_i)_{k}\frac{\partial^2 \mP}{\partial x_{qk}^2}\bigg| \\
&\prec N^{-1} \sum_{\begin{subarray}{c}s_1,s_2\geq 1\\ s_1+s_2=s+1 \end{subarray}} \Big(\sum_{q} \big| y_{tq} \big| \Big)\sum_{k} \big|(X^*\mG_1^{s_1})_{kq} \big| \big|(X^*\mG_1^{s_2} \bv_i)_k\big|=O_\prec(N^{-\frac12}).
\end{align*}
In the first and second steps above, we used the  bound $|\mathcal P| \prec 1$ and (\ref{19072411}),  
 respectively. In the last step, we used the isotropic local law \eqref{est_XG} and also the fact  $\sum_{q}\big|y_{tq}\big|\prec \sn$. 
 The other negligible terms can be estimated similarly. We omit the details. 

Plugging the definitions in (\ref{P012}) into (\ref{17092031}) yields
 \begin{align*}
\mathcal J_1 &= \frac{1}{N}\sum_{q,k}y_{tq}\Big((\mG_1^s\bv_i)_{q}-\sum_{\begin{subarray}{c}s_1,s_2\geq 1\\s_1+s_2=s+1\end{subarray}}(X^*\mG_1^{s_1}X)_{kk}(\mG_1^{s_2}\bv_i)_{q}\Big)\nonumber\\
 & \times \sum_{b=1}^2\bigg(\sum_{\begin{subarray}{c}b_1,b_2,b_3\geq 1\\b_1+b_2+b_3=b+2\end{subarray}}(\mb{y}_b^*\mG_1^{b_1})_q(X^*\mG_1^{b_2}X)_{kk}(\mG_1^{b_3}\bv_i)_q-\sum_{\begin{subarray}{c}b_1,b_2\geq 1\\b_1+b_2=b+1\end{subarray}}(\mb{y}_b^*\mG_1^{b_1})_q(\mG_1^{b_2}\bv_i)_q\bigg)\mP^{l-2}+O_\prec(N^{-\frac12}).
 \end{align*}
Again, applying the identities in Lemma \ref{lemrelation}, the isotropic local laws \eqref{est_DG}, and \eqref{est_m12N}, we see 
 \begin{align*}
\mathcal J_1&=\bigg(\frac{m_1^{(s-1)}}{(s-1)!}-\sum_{\begin{subarray}{c}s_1,s_2\geq 1\\s_1+s_2=s+1\end{subarray}}\Big(\frac{m_2^{(s_1-2)}}{(s_1-2)!}+z\frac{m_2^{(s_1-1)}}{(s_1-1)!}\Big)\frac{m_1^{(s_2-1)}}{(s_2-1)!}\bigg)\sum_{b=1}^2\sum_{q}y_{tq}y_{bq}(v_i^q)^2\nonumber\\
 &\quad \times \bigg(\sum_{\begin{subarray}{c}b_1,b_2,b_3\geq 1\\b_1+b_2+b_3=b+2\end{subarray}}
\frac{m_1^{(b_1-1)}}{(b_1-1)!}\frac{m_1^{(b_3-1)}}{(b_3-1)!}\Big(\frac{m_2^{(b_2-2)}}{(b_2-2)!}+z\frac{m_2^{(b_2-1)}}{(b_2-1)!}\Big)-\frac{(m_1^2)^{(b-1)} }{(b-1)!}\bigg)\mP^{l-2},
 \end{align*}
 which by \eqref{eq:simple} 
and  the product rule can be rewritten as 
\begin{align}
\mathcal J_1&= - \frac{(zm_2m_1)^{(s-1)}}{(s-1)!}
\bigg( \Big(\sum_{q}y_{tq}y_{1q}v_{iq}^2\Big)(zm_2m_1^2)+ \Big(\sum_{q}y_{tq}y_{2q}v_{iq}^2\Big)(zm_2m_1^2)'\bigg)\mP^{l-2} +O_\prec(N^{-\frac12})\nonumber\\
&=-\frac{(zm_2m_1)^{(s-1)}}{(s-1)!}
\sum_{b=0}^1 \mathbf{s}_{1,1,2} (\mb{y}_t,\mb{y}_{b+1},\bv_i)(zm_2m_1^2)^{(b)} \mP^{l-2}+O_\prec(N^{-\frac12}). \label{17092054}
 \end{align}
 Next, we show that
 \begin{align}
 \mathcal J_2 =\frac{1}{2N^{\frac{3}{2}}}\sum_{q,k}y_{tq}\frac{\partial (X^*\mG_1^s\mb{v}_i)_{k}}{\partial x_{qk}}\Big(\frac{\partial \mP}{\partial x_{qk}}\Big)^2\mP^{l-3} = O_\prec(N^{-\frac12}). \label{17092051}
 \end{align}

With \eqref{deriXG^l}, we write $ \mathcal J_2$ as 
\begin{align}
\mathcal J_2&=\frac{1}{2N^{\frac{3}{2}}}\sum_{q,k}y_{tq}\bigg((\mG_1^s\bv_i)_{q}-\sum_{a=1}^2\sum_{\begin{subarray}{c} s_1,s_2\geq 1\\s_1+s_2=s+1\end{subarray}}(X^*\mG_1^{s_1}\mathscr{P}_a^{qk}\mG_1^{s_2}\bv_i)_{k}\bigg)\Big(\frac{\partial \mP}{\partial x_{qk}}\Big)^2\mP^{l-3} \nonumber\\
&=\frac{1}{2N^{\frac{3}{2}}}\sum_{q,k}y_{tq}\bigg((\mG_1^s \bv_i)_{q}-\sum_{\begin{subarray}{c} s_1,s_2\geq 1\\s_1+s_2=s+1\end{subarray}}(X^*\mG_1^{s_1}X)_{kk}(\mG_1^{s_2}\bv_i)_{q}\bigg)\Big(\frac{\partial \mP}{\partial x_{qk}}\Big)^2\mP^{l-3}+\onh, \label{17092052}
\end{align}
where in the last step we bounded the $a=1$ terms by  $O_\prec(N^{-\frac12})$ since 
\begin{align*}
&\bigg|\frac{1}{2N^{\frac{3}{2}}}\sum_{q,k}y_{tq}\sum_{\begin{subarray}{c} s_1,s_2\geq 1\\s_1+s_2=s+1\end{subarray}}(X^*\mG_1^{s_1})_{kq}(X^*\mG_1^{s_2}\bv_i)_{kq}\Big(\frac{\partial \mP}{\partial x_{qk}}\Big)^2\mP^{l-3}\bigg|\\
&\prec N^{-\frac32}\sum_{\begin{subarray}{c} s_1,s_2\geq 1\\s_1+s_2=s+1\end{subarray}}\sum_{q,k}\big|(X^*\mG_1^{s_1})_{kq}\big|\big|(X^*\mG_1^{s_2}\bv_i)_{kq}\big|=\onh.
\end{align*}
Here we used the  $O_\prec(1)$ bound for both $\mP$ and $\partial\mP/\partial x_{qk}$ (\cf (\ref{19072410})) for the first step and  \eqref{est_XG} for the second step.

Further, we have
\begin{align}
&\Big|N^{-\frac32}\sum_{q,k}y_{tq}\big((X^*\mG_1^{s_1}X)_{kk}\big)^a(\mG_1^{s_2}\bv_i)_q\Big(\frac{\partial \mP}{\partial x_{qk}}\Big)^2\mP^{l-3}\Big|\nonumber\\
&\prec N^{-\frac12} \sum_{q} \big|y_{tq}(\mG_1^{s_2}\bv_i)_q\big|=\onh,\qquad a=0,1. \label{19072050}
\end{align}
Here, in the first step we used  $O_\prec(1)$ bounds for $\mP$, $\partial\mP/\partial x_{qk}$ (\cf (\ref{19072410})),  and also $(X^*\mG_1^{s_1}X)_{kk}^a$ whose bound follows from (\ref{relationXG}) and the local law, and in the second step we use Cauchy-Schwarz inequality and the local law. With (\ref{19072050}), we can now conclude (\ref{17092051}) from (\ref{17092052}).  Then (\ref{17092053}), (\ref{17092054}) and (\ref{17092051}) imply \rf{rankresthts(1,2)}.

In the sequel,  we prove (2) of Lemma \ref{rankrlemh_ts}, i.e.,   we show that except for (\ref{rankresthts(1,0)})-(\ref{rankresthts(1,2)}) all the other terms $h_{t,s}(\alpha_1,\alpha_2)$ with $\alpha_1+\alpha_2\le 3$ can be bounded by $O_\prec(N^{-\frac12})$.  We start with the case when $\alpha_1+\alpha_2=2$, i.e. $(\alpha_1,\alpha_2)=(2,0),(0,2),(1,1)$.  Recall the notation $\kappa_3$ for the common $3$rd cumulant of all $\sqrt{N}x_{ij}$'s from Assumption \ref{assumption} (iii). 
First,  by (\ref{eq:2ndXG}), we have 
\begin{align}\label{eq:hst20}
h_{t,s}(2,0)&= \frac{\kappa_3 }{2N}\sum_{q,k}y_{tq}\frac{\partial^2 (X^*\mG_1^s\bv_i)_{k}}{\partial x_{qk}^2}\mP^{l-1}\nonumber\\
&=\frac{\kappa_3 }{N}\sum_{q,k}y_{tq}\bigg(\sum_{a_1,a_2=1}^2\sum_{\begin{subarray}{c}s_1,s_2,s_3\geq 1\\s_1+s_2+s_3=s+2\end{subarray}}\Big(X^*\mG_1^{s_1}\mathscr{P}_{a_1}^{qk}\mG_1^{s_2}\mathscr{P}_{a_2}^{qk}\mG_1^{s_3}\bv_i\Big)_{k}\nonumber\\
&\quad -\sum_{a=1}^2\sum_{\begin{subarray}{c}s_1,s_2\geq 1\\s_1+s_2=s+1\end{subarray}}\Big(\mG_1^{s_1}\mathscr{P}_{a}^{qk}\mG_1^{s_2}\bv_i\Big)_{q}-\sum_{\begin{subarray}{c}s_1,s_2\geq 1\\s_1+s_2=s+1\end{subarray}}\Big(X^*\mG_1^{s_1}\mathscr{P}_0^{qk}\mG_1^{s_2}\bv_i\Big)_{k}\bigg)\mP^{l-1}.
\end{align}
Note that all terms above contain at least one quadratic form of $X^*\mG_1^a$ as a factor,  for some $a\ge 1$. This fact eventually leads to the $O_\prec(N^{-\frac12})$ bound for all terms above, by the isotropic local law.  More specifically, plugging the definitions in (\ref{P012}) into (\ref{eq:hst20}) and taking the sums, we can see that  the RHS of (\ref{eq:hst20}) is a linear combination of the terms of the following forms 
\begin{align}
&N^{-1} \sum_{q,k}y_{tq}(X^*\mG_1^{s_1})_{kq}(X^*\mG_1^{s_2})_{kq}(X^*\mG_1^{s_3}\bv_i)_k \mP^{l-1} \label{17092071}\\
&N^{-1} \sum_{q,k}y_{tq}\big((X^*\mG_1^{s_1}X)_{kk}\big)^a(X^*\mG_1^{s_2})_{kq}(\mG_1^{s_3}\bv_i)_q \mP^{l-1},\label{17092072}\\ 
&N^{-1} \sum_{q,k}y_{tq}\big((X^*\mG_1^{s_1}X )_{kk}\big)^a(\mG_1^{s_2})_{qq}(X^*\mG_1^{s_3}\bv_i)_{k} \mP^{l-1}, \qquad a=1,2. \label{17092070}
\end{align}
First, by simply using the $O_\prec(1)$ bound for $\mP^{l-1}$ and the $O_\prec(N^{-\frac12})$ bound for  $(X^*\mG_1^{s_1})_{kq}$, $(X^*\mG_1^{s_2})_{kq}$, and $(X^*\mG_1^{s_3}\bv_i)_k$ one can get the  $\onh$ bound for the  term in (\ref{17092071}).  Second, using the $O_\prec(1)$ bound for $\mP^{l-1}$ and $\big((X^*\mG_1^{s_1}X)_{kk}\big)^a$, and also the $O_\prec(N^{-\frac12})$ bound for  $(X^*\mG_1^{s_2})_{kq}$, we have 
\begin{align}
&(\ref{17092072})=O_\prec\Big(N^{-\frac12}\sum_{q}\big|y_{tq}(\mG_1^{s_3}\bv_i)_q\big|\Big)=O_\prec(N^{-\frac12}),
\end{align}
where we used the Cauchy-Schwarz inequality and the isotropic local law in the last step. 
Third, by Cauchy-Schwarz inequality and \eqref{est_DG}, we have
\begin{align}\label{eq:middleCS}
&\Big|N^{-1}\sum_{q,k}y_{tq}\big((X^*\mG_1^{s_1}X )_{kk}\big)^a(\mG_1^{s_2})_{qq}(X^*\mG_1^{s_3}\bv_i)_{k}\Big|\nonumber\\
&\leq N^{-1}\|\mathbf{y}_t\|\Big(\sum_{q}(\mG_1^{s_2})_{qq}^2\Big)^{1/2}\Big|\sum_{k}\big((X^*\mG_1^{s_1}X )_{kk}\big)^a(X^*\mG_1^{s_3}\bv_i)_{k}\Big|\nonumber\\
&\prec N^{-\frac12}\Big|\sum_{k}\big((X^*\mG_1^{s_1}X )_{kk}\big)^a(X^*\mG_1^{s_3}\bv_i)_{k}\Big|\prec N^{-\frac12}.
\end{align}
In the last step above, we used \eqref{Importestsum2}. Hence, we conclude $h_{t,s}(2,0)=\onh$.

Next, in the case of  $(\alpha_1,\alpha_2)=(0,2)$, by the definition in (\ref{rankrdefhts}), we have 
\begin{align}
 h_{t,s}(0,2)
 &=(l-1)(l-2)\frac{\kappa_3 }{2N}\sum_{q,k}y_{tq}(X^*\mG_1^s\bv_i)_{k}\Big(\frac{\partial\mP}{\partial x_{qk}}\Big)^2 \mP^{l-3}\nonumber\\
 &\qquad+(l-1)\frac{\kappa_3 }{2N}\sum_{q,k}y_{tq}(X^*\mG_1^s\bv_i)_{k}\frac{\partial^2\mP}{\partial x_{qk}^2} \mP^{l-2}. \label{rankrhts(0,2)}
\end{align}
We then use the formula  in \rf{eq:Pd1}. After expanding the first term in the RHS of \rf{rankrhts(0,2)}, one notices that it can be written as a linear combination of the terms of the  forms 
\begin{align}
\sum_{q}y_{tq}(\mG_1^{a_1}\mb{\eta}_1)_{q}(\mG_1^{a_2}\mb{\psi}_1)_{q}\sum_{k}(X^*\mG_1^s\bv_i)_{k}(X^*\mG_1^{b_1}\mb{\eta}_2)_{k}(X^*\mG_1^{b_2}\mb{\psi}_2)_{k}\mP^{l-3}, \label{19072080}
\end{align}
where the vectors $\mb{\eta}_\alpha,{\mb{\psi}}_\alpha$ $(\alpha=1,2)$ take $\bv_i$, $\mb{y}_1$ or $\mb{y}_2$ and $a_1,a_2,b_1,b_2=1$ or 2.
We claim that  the general form in (\ref{19072080}) is $\onh$ for all choices of $\mb{\eta}_\alpha,{\mb{\psi}}_\alpha$, $a_1,a_2,b_2,b_2$ listed above. To see this, we first notice that  the isotropic local law  \eqref{est_XG} implies
\begin{align}
\sum_{k}(X^*\mG_1^s\bv_i)_{k}(X^*\mG_1^{b_1}\mb{\eta}_2)_{k}(X^*\mG_1^{b_2}\mb{\psi}_2)_{k}=\onh. \label{19072081}
\end{align}
 Further, from \rf{Importestsum1}, we have 
\begin{align}
\sum_{q}y_{tq}(\mG_1^{a_1}\mb{\eta}_1)_{q}(\mG_1^{a_2}\mb{\psi}_1)_{q}=O_\prec(1). \label{19072082}
\end{align} 
Combining (\ref{19072081}), (\ref{19072082}), and the fact $|\mP|\prec 1$, we conclude that the term in (\ref{19072080}) is of order $O_\prec(N^{-\frac12})$. This further implies that the first term in the RHS of  \rf{rankrhts(0,2)} is $O_\prec(N^{-\frac12})$.

 Analogously,  using the formula in  \rf{eq:Pd2}, it is easy to see that the second term in the RHS of \rf{rankrhts(0,2)} is a linear combination of the terms of the following forms
\begin{align}
&N^{-\frac12}\Big(\sum_{q}y_{tq}(\mG_1^{b_1}\mb{\eta}_1)_q(\mG_1^{b_2}\mb{\eta}_2)_q\Big)\Big(\sum_{k}\big((X^*\mG_1^{b_3}X)_{kk}\big)^a(X^*\mG_1^s\bv_i)_{k}\Big)\mP^{l-2},\label{eq:h02-1}\\
&N^{-\frac12} \Big(\sum_{q}y_{tq}(\mG_1^{b_1}\mb{\eta}_1)_q\Big)\Big(\sum_{k}(X^*\mG_1^{b_2})_{kq}(X^*\mG_1^{b_3}\mb{\eta}_2)_k(X^*\mG_1^s\bv_i)_{k}\Big)\mP^{l-2},\label{eq:h02-2}\\
&N^{-\frac12} \Big(\sum_{q}y_{tq}(\mG_1^{b_1})_{qq}\Big)\Big(\sum_{k}(X^*\mG_1^{b_2}\mb{\eta}_1)_{k}(X^*\mG_1^{b_3}\mb{\eta}_2)_k(X^*\mG_1^s\bv_i)_{k}\Big)\mP^{l-2}, \label{eq:h02-3}
\end{align}
for $\mb{\eta}_1, \mb{\eta}_2=\bv_i, \mb{y}_1$ or $\mb{y}_2$, $a=0,1$ and $b_1,b_2,b_3=1$ or 2. We claim that all of the above terms can be bounded by $O_\prec(N^{-\frac12})$. First, recall the bound $|\mathcal{P}|\prec 1$.  The $O_\prec(N^{-\frac12})$ bound for \eqref{eq:h02-1} follows directly from \rf{Importestsum1} and \rf{Importestsum2}. From \eqref{Importestsum1} and the isotropic local law \eqref{est_XG}, we see that \eqref{eq:h02-2} is also bounded by $O_\prec(N^{-\frac12})$. The estimate of \eqref{eq:h02-3} is similar to \eqref{eq:middleCS} by using the Cauchy-Schwarz inequality and the isotropic local law \eqref{est_XG}. We thus omit the details. Hence, the second term in the RHS of \rf{rankrhts(0,2)} is also of order $O_\prec(N^{-\frac12})$. 
This together with the same bound for the first term in the RHS of \rf{rankrhts(0,2)} leads to the fact that $$h_{t,s}(0,2)=O_\prec(N^{-\frac12}).$$

Next, we turn to  $h_{t,s}(1,1)$.  By the definition in (\ref{rankrdefhts}), we have 
\begin{align}
{h}_{t,s}(1,1)=\frac{\kappa_3}{N}\sum_{q,k}y_{tq}\frac{\partial (X^*\mG_1^s\bv_i)_{k}}{\partial x_{qk}}\frac{\partial \mP}{\partial x_{qk}}\mP^{l-2}.\label{rankrhts(1,1)}
\end{align}
Using \rf{deriXG^l} and \rf{eq:Pd1}, we can write
\begin{align}
h_{t,s}(1,1)&=-N^{-\frac12}{\kappa_3}\sum_{q,k}y_{tq}\Big((\mG_1^s\bv_i)_{q}-\sum_{\begin{subarray}{c}s_1,s_2\geq 1\\s_1+s_2=s+1\end{subarray}}\Big((X^*\mG_1^{s_1})_{kq}(X^*\mG_1^{s_2}\bv_i)_{k}+(X^*\mG_1^{s_1}X)_{kk}(\mG_1^{s_2}\bv_i)_{q}\Big)\Big)
\nonumber\\
&\qquad\qquad\times \sum_{b=1}^2\sum_{\begin{subarray}{c}b_1,b_2\geq 1\\b_1+b_2=b+1\end{subarray}}\Big((\mb{y}_b^*\mG_1^{b_1})_q(X^*\mG_1^{b_2}\bv_i)_k+(X^*\mG_1^{b_1}\mb{y}_b)_k(\mG_1^{b_2}\bv_i)_q\Big)\mP^{l-1}.
\end{align}
It is easy to see that the above is a linear combination of the terms of  the following forms
\begin{align*}
&N^{-\frac12} \Big(\sum_{q}y_{tq}(\mG_1^{s_1}\bv_i)_{q}(\mG_1^{b_1}\mb{\eta}_1)_{q}\Big)\Big(\sum_k\big((X^*\mG_1^{s_2}X)_{kk}\big)^a(X^*\mG_1^{b_2}\mb{\eta}_2)_{k}\Big)\mP^{l-1},\nonumber\\
&N^{-\frac12} \Big(\sum_{q}y_{tq}(\mG_1^{b_1}\mb{\eta}_1)_{q}\Big)\Big(\sum_k(X^*\mG_1^{s_1})_{kq}(X^*\mG_1^{s_2}\mb{v}_i)_k(X^*\mG_1^{b_2}\mb{\eta}_2)_{k}\Big)\mP^{l-1}, \qquad a=1,2.
\end{align*}
Similarly to estimates of \eqref{eq:h02-1} an \eqref{eq:h02-2}, both two terms above can be bounded by $O_\prec(N^{-\frac12})$. Hence, we have $h_{t,s}(1,1)=O_\prec(N^{-\frac12})$. 

Next, we  consider the other cases for $\alpha_1+\alpha_3=3$ except for (\ref{rankresthts(1,2)}), i.e. $(\alpha_1,\alpha_2)=(3,0),(2,1),(0,3)$.  We start with the formulas  in (\ref{rankrorder3derivative}) and  (\ref{eq:Pd3}). Notice that according to the definition in (\ref{def of O}), we have $\mathcal{O}_2=\{(0,1),(0,2),(1,0),(2,0)\}$ in (\ref{rankrorder3derivative}) and  (\ref{eq:Pd3}). 

With (\ref{rankrorder3derivative}), we  can now estimate   $h_{t,s}(3,0)$. By the definition in \rf{rankrdefhts},
\begin{align}
h_{t,s}(3,0)=\frac{\kappa_4 }{3!N^{\frac{3}{2}}}\sum_{q,k}y_{tq}\frac{\partial^3 (X^*\mG_1^s\bv_i)_{k}}{\partial x_{qk}^3}\mP^{l-1}. \label{eq:h(3,0)}
\end{align}
After plugging in \eqref{rankrorder3derivative} and taking the sums, one can check that except for the following type of terms  
\begin{align}
N^{-\frac32}\Big(\sum_{q}y_{tq}(\mG_1^{s_3})_{qq}(\mG_1^{s_4}\bv_i)_q\Big)\Big(\sum_k\big((X^*\mG_1^{s_1}X)_{kk}\big)^a\big((X^*\mG_1^{s_2}X)_{kk}\big)^b\Big)\mathcal P^{l-1} \qquad  a,b=0,1, \label{19072101}
\end{align} 
all the other terms of (\ref{eq:h(3,0)}) contain at least  one quadratic form of $X^*\mG_1^a$ for some $a\ge 1$. Actually, by the isotropic local law \eqref{est_XG}, those terms with at least  one quadratic form of $X^*\mG_1^a$ can all be bounded by $O_\prec(N^{-1})$. For instance, 
\begin{align*}
&N^{-\frac32} \sum_{q,k} y_{tq} \big(\mG_1^{s_1}\mathscr{P}_1^{qk}\mG_1^{s_2}\mathscr{P}_1^{qk}\mG_1^{s_3}\bv_i\big)_{q}\mP^{l-1} \\
&= N^{-\frac32} \sum_{q,k} y_{tq} (\mG_1^{s_1})_{qq} (X^*\mG_1^{s_2})_{kq} (X^*\mG_1^{s_3} \bv_i)_k \mP^{l-1} =O_\prec(N^{-1}).
\end{align*}
The other terms with at least  one quadratic form of $X^*\mG_1^a$ can be estimated similarly.  We omit the details. 
Further, using \rf{Importestsum1} and also Remark \ref{boundrmk}, one can easily get the $O_\prec(N^{-\frac12})$ for the terms in (\ref{19072101}). As a consequence, we get  $h_{t,s}(3,0)=\onh$.

Next, for $h_{t,s}(2,1)$, by the definition in (\ref{rankrdefhts}), we have
\begin{align}
h_{t,s}(2,1)=\frac{(l-1)\kappa_4}{2N^{\frac{3}{2}}}\sum_{q,k}y_{tq}\frac{\partial^2 (X^*\mG_1^s\bv_i)_{k}}{\partial x_{qk}^2}\frac{\partial \mP}{\partial x_{qk}}\mP^{l-2}.\label{rankrhts(2,1)}
\end{align}
Using the formula in (\ref{eq:2ndXG}) and further the $O_\prec(N^{-\frac12})$ bound for  the quadratic forms of $X^*\mG_1^a, a\geq 1$, it is not difficult to see 
 $$ \frac{\partial^2 (X^*\mG_1^s\bv_i)_{k}}{\partial x_{qk}^2}=\onh, $$
similarly to the previous discussion.   Then, the above bound together with the $O_\prec(1)$ bound for $\partial \mP/\partial x_{qk}$ (\cf (\ref{19072410})) and $\mathcal{P}$,  one can conclude that  $h_{t,s}(2,1)=\onh$.

For $h_{t,s}(0,3)$, we first write
\begin{align*}
h_{t,s}(0,3)&=\frac{\kappa_4 }{3!N^{\frac{3}{2}}}\sum_{q,k}y_{tq}(X^*\mG_1^s\bv_i)_{k}\frac{\partial^3 \mP^{l-1}}{\partial x_{qk}^3}\nonumber\\
&=\mathcal \kappa_4(l-1)(l-2)(l-3)\mathcal{L}_1 + \kappa_4(l-1)(l-2)\mathcal L_2 + \kappa_4(l-1)\mathcal L_3, 
\end{align*}
where
\begin{align*}
&\mathcal L_1:= \frac{1 }{3!N^{\frac{3}{2}}}\sum_{q,k}y_{tq}(X^*\mG_1^s\bv_i)_{k}\Big(\frac{\partial \mP}{\partial x_{qk}}\Big)^3\mP^{l-4},\nonumber\\
&\mathcal L_2:= \frac{1 }{2!N^{\frac{3}{2}}}\sum_{q,k}y_{tq}(X^*\mG_1^s\bv_i)_{k}\frac{\partial \mP}{\partial x_{qk}}\frac{\partial^2 \mP}{\partial x_{qk}^2}\mP^{l-3},\nonumber\\
&\mathcal L_3:=  \frac{1 }{3!N^{\frac{3}{2}}}\sum_{q,k}y_{tq}(X^*\mG_1^s\bv_i)_{k}\frac{\partial^3 \mP}{\partial x_{qk}^3}\mP^{l-2}.
\end{align*}
First, note that  $\mathcal L_1=\onh$, by using the facts $|\partial \mP/\partial x_{ik}|\prec 1$ (\cf (\ref{19072410})) and  $|(X^*\mG_1^s\bv_i)_{k}|\prec N^{-\frac12}$ (\cf (\ref{est_XG})), together with the Cauchy-Schwarz inequality. 

Second, as for $\mathcal L_2$, we use the formula of $\partial^2 \mP/\partial x_{qk}^2$ in \eqref{eq:Pd2}. Observe that, by the isotropic local laws  \eqref{est_DG} and \eqref{est_XG},  the terms in \eqref{eq:Pd2} can be  bounded  by either $O_\prec(N^{-1/2})$ or $O_\prec(\sn)$.
 More precisely, those $O_\prec(N^{-1/2})$ terms  possess one of the following forms 
\begin{align*}
\sn (X^*\mG_1^{b_1}\mb{\eta}_1)_k(X^*\mG_1^{b_2})_{kq}(\mG_1^{b_3}\mb{\eta}_2)_q, \qquad \sn (X^*\mG_1^{b_1}\mb{\eta}_1)_k(\mG_1^{b_2})_{qq}(X^*\mG_1^{b_3}\mb{\eta}_2)_k,
\end{align*} 
and  those $O_\prec(\sn)$ terms possess the common form $\sn (\mG_1^{b_1}\mb{\eta}_1)_q(X^*\mG_1^{b_2}X)_{kk}^a(\mG_1^{b_3}\mb{\eta}_2)_q$. Here, $\mb{\eta}_1,\mb{\eta}_2=\mb{y}_1,\mb{y}_2,$ or $\bv_i$,  $a=0,1$ and $b_1,b_2,b_3=1$ or $2$.
 The contribution from the $O_\prec(N^{-1/2})$ terms can be discussed similarly to $\mathcal{L}_1$. We thus omit the details. For the contribution from the  those  $O_\prec(\sn)$ terms,   we notice that it suffices to consider the bound of the following forms 
\begin{align}
\frac{1}{\sn}\Big(\sum_{q}y_{tq}(\mG_1^{b_1}\mb{\eta}_1)_q(\mG_1^{b_2}\mb{y}_b)_q(\mG_1^{b_3}\bv_i)_q\Big)\Big(\sum_k(X^*\mG_1^s\bv_i)_k(X^*\mG_1^{b_4}\bv_i)_k\big((X^*\mG_1^{b_5}X)_{kk}\big)^a\Big) \mP^{l-3},\quad a=0,1
\end{align}
which is $\onh$ by \rf{Importestsum1} and the isotropic local laws  \eqref{est_DG} and \eqref{est_XG}. Hence, $\mathcal L_2$ is also bounded by $\onh$.

Finally, for $\mathcal L_3$, we use the bound in (\ref{eq:bdPd3}). 
Then following the same argument as for $\mathcal L_1$, one can prove that $\mathcal L_3= O_\prec(N^{-\frac12})$. 
 To conclude, we have $h_{t,s}(0,3)=\onh$.

Hence, we conclude the proof of (2) in Lemma \ref{rankrlemh_ts}.

Before we proceed to the proof of the remainder term $\mathcal R_{t,s}$, let us comment that, using the same reasoning as we  previously did for $h_{t,s}(\alpha_1,\alpha_2)$ with $\alpha_1+\alpha_2\le 3$, one can also get
\begin{align}\label{eq:bdh4'}
N^{-2} \sum_{q,k} \Big|y_{tq} \frac{\partial^4 \big((X^*\mG_1^s\bv_i)_{k}\mP^{l-1}\big)}{\partial x_{qk}^4}\Big| = O_\prec(N^{-\frac12}),
\end{align}
which further implies that $h_{t,s}(\alpha_1,\alpha_2) = O_\prec(N^{-\frac12})$ for $\alpha_1+\alpha_2 =4$. The main tools are still Lemma \ref{Importantests} and the isotropic local laws \eqref{est_DG}  and \eqref{est_XG}. We omit the details. The necessary formulas for the fourth derivatives of $(X^*\mG_1^s)_{kj}$ and $\mP$ are recorded in the Appendix for the readers' convenience.

In the end, we prove  (3) of Lemma \ref{rankrlemh_ts}, i.e.,  we show the estimate of $\mathcal{R}_{t,s}$. By Lemma \ref{cumulantexpansion}, we can bound $\mathcal{R}_{t,s}$ by
\begin{align}
|\mathcal{R}_{t,s}|\leq \sn \sum_{q,k}\mathbb{E}\Big( & N^{-\frac{5}{2}}\sup_{|x_{qk}|\leq N^{-\frac{1}{2}+\epsilon}}\Big|  y_{tq}\frac{\partial^4 \big((X^*\mG_1^s\bv_i)_{k}\mP^{l-1}\big)}{\partial x_{qk}^4}\Big| \nonumber\\
&+N^{-\wt K}\sup_{x_{qk}\in\mathbb{R}}\Big| y_{tq}\frac{\partial^4 \big((X^*\mG_1^s\bv_i)_{k}\mP^{l-1}\big)}{\partial x_{qk}^4}\Big|\Big), \label{boundR_t}
\end{align}
for any sufficiently large constant $\wt K$. We evaluate the RHS of \rf{boundR_t} term by term. 

First, we claim that similarly to  \eqref{eq:bdh4'} we have 
\begin{align}
\sn\mathbb{E}\Big(N^{-\frac{5}{2}}\sup_{|x_{qk}|\leq N^{-\frac{1}{2}+\epsilon}}\sum_{q,k}\Big| y_{tq}\frac{\partial^4 \big((X^*\mG_1^s\bv_i)_{k}\mP^{l-1}\big)}{\partial x_{qk}^4}\Big|\Big)=O_\prec(N^{-\frac12}). \label{19072201}
\end{align}
Differently from  \eqref{eq:bdh4'}, in (\ref{19072201}), we actually consider a random matrix $\wt{X}$ with the $(q,k)$-th entry deterministic while all the other entries random.  Using a regular perturbation argument through the resolvent expansion,   one can show that replacing one random entry $x_{qk}$ in $X$ by any deterministic  number bounded by $N^{-1/2+\epsilon}$ while keeping all the other $X$ entries random will not change the isotropic local law. Then the isotropic local law together with the trivial deterministic bounds in \rf{trivialbd_G1}-\rf{trivialbd_XG1X} leads to (\ref{19072201}).

For the second term of \rf{boundR_t}, we simply use the crude deterministic bounds in  \rf{trivialbd_G1}-\rf{trivialbd_XG1X}. By choosing $\wt{K}$ sufficiently large, we can conclude that the second term in   \rf{boundR_t} is negligible. 
  
 This completes the proof of  Lemma \ref{rankrlemh_ts}.

\end{proof}

\appendix
\section{ Collection of derivatives} \label{s.derivative of G}
In this section, we summarize some derivatives that appear in the previous sections. And all these derivatives can be obtained by repeatedly applying the second identity in \rf{derivative} and chain rule. 
For convenience, we set 
\begin{align}
\mathcal{O}_l:=\{(o_1,\cdots,o_l):\exists i\in \{1,\cdots,l\}, o_i=0, \text{and }o_{j}=1,2  \text{ for all $j\neq i$}\} \label{def of O}
\end{align}

Below,  we first collect  the derivatives of $(X^*\mG_1^s\bv)_{k}$  for some deterministic unit vector $\bv$, 
which can be derived by using (\ref{derivative}) and the product rule.
The first derivative of $(X^*\mG_1^s\bv)_{k}$
\begin{align}
\label{deriXG^l}
 &\frac{\partial (X^*\mG_1^{s}\bv)_{k}}{\partial x_{qk}}=(\mG_1^s\bv)_{q}-\sum_{a=1}^2\sum_{\begin{subarray}{c}s_1,s_2\geq 1;\\ s_1+s_2=s+1 \end{subarray} }(X^*\mG_1^{s_1}\mathscr{P}_a^{qk}\mG_1^{s_2}\bv)_{k}.
\end{align}

The second derivative of  $(X^*\mG_1^s\bv)_{k}$
\begin{align} \label{eq:2ndXG}
\frac{\partial^2 (X^*\mG_1^s\bv)_{k}}{\partial x_{qk}^2}=&2\bigg(-\sum_{a=1}^2\sum_{\begin{subarray}{c}s_1,s_2\geq 1;\\ s_1+s_2=s+1 \end{subarray}}\Big(\mG_1^{s_1}\mathscr{P}_a^{qk}\mG_1^{s_2}\bv\Big)_{q}+\sum_{a_1,a_2=1}^2\sum_{\begin{subarray}{c}s_1, s_2, s_3\geq 1;\\ \sum_{i=1}^3s_i=s+2 \end{subarray}}\Big(X^*\mG_1^{s_1}\mathscr{P}_{a_1}^{qk}\mG_1^{s_2}\mathscr{P}_{a_2}^{qk}\mG_1^{s_3}\bv\Big)_{k}\nonumber\\
&
-\sum_{\begin{subarray}{c}s_1,s_2\geq 1;\\ s_1+s_2=s+1 \end{subarray}}\Big(X^*\mG_1^{s_1}\mathscr{P}_{0}^{qk}\mG_1^{s_2}\bv\Big)_{k}\bigg).
\end{align}

The third derivative of  $(X^*\mG_1^s\bv)_{k}$

\begin{align} 
\label{rankrorder3derivative}
\frac{\partial^3 (X^*\mG_1^s\bv)_{k}}{\partial x_{qk}^3}=& 6\bigg(\sum_{a_1,a_2=1}^2\sum_{\begin{subarray}{c}s_1,s_2,s_3\geq 1;\\ \sum_{i=1}^3s_i=s+2 \end{subarray}}\Big(\mG_1^{s_1}\mathscr{P}_{a_1}^{qk}\mG_1^{s_2}\mathscr{P}_{a_2}^{qk}\mG_1^{s_3}\bv\Big)_{q}-\sum_{\begin{subarray}{c}s_1,s_2\geq 1;\\ s_1+s_2=s+1 \end{subarray}}\Big(\mG_1^{s_1}\mathscr{P}_{0}^{qk}\mG_1^{s_2}\bv\Big)_{q} \nonumber\\
&-\sum_{a_1,a_2,a_3=1}^2\sum_{\begin{subarray}{c}s_1,\cdots,s_4\geq 1;\\ \sum_{i=1}^4s_i=s+3 \end{subarray}}\Big(X^*\Big(\prod_{i=1}^3(\mG_1^{s_i}\mathscr{P}_{a_i}^{qk})\Big)\mG_1^{s_4}\bv\Big)_{k}\nonumber\\
&+\sum_{(a_1,a_2)\in \mathcal{O}_2}\sum_{\begin{subarray}{c}s_1,\cdots,s_3\geq 1;\\ \sum_{i=1}^3s_i=s+2 \end{subarray}}\Big(X^*\Big(\prod_{i=1}^2(\mG_1^{s_i}\mathscr{P}_{a_i}^{qk})\Big)\mG_1^{s_3}\bv\Big)_{k}\bigg).
\end{align}

The fourth derivative of  $(X^*\mG_1^s\bv)_{k}$
\begin{align} 
\frac{\partial^4 (X^*\mG_1^s\bv)_{k}}{\partial x_{ik}^4}=&4!\bigg(-\sum_{a_1,a_2,a_3=1}^2\sum_{\begin{subarray}{c}s_1,\cdots,s_4\geq 1;\\ \sum_{i=1}^4s_i=s+3 \end{subarray}}\Big(\Big(\prod_{i=1}^3(\mG_1^{s_i}\mathscr{P}_{a_i}^{qk})\Big)\mG_1^{s_4}\bv\Big)_{q} \nonumber\\
&+\sum_{(a_1,a_2)\in \mathcal{O}_2}\sum_{\begin{subarray}{c}s_1,s_2,s_3\geq 1;\\ \sum_{i=1}^3s_i=s+2 \end{subarray}}\Big(\Big(\prod_{i=1}^2(\mG_1^{s_i}\mathscr{P}_{a_i}^{qk})\Big)\mG_1^{s_3}\bv\Big)_{q}\nonumber\\
&+\sum_{a_1,\cdots,a_4=1}^2\sum_{\begin{subarray}{c}s_1,\cdots,s_5\geq 1;\\ \sum_{i=1}^5s_i=s+4\end{subarray}}\Big(X^*
\Big(\prod_{i=1}^4(\mG_1^{s_i}\mathscr{P}_{a_i}^{qk})\Big)\mG_1^{s_5}\bv\Big)_{k} \nonumber\\
&-\sum_{(a_1,a_2,a_3)\in \mathcal{O}_3}\sum_{\begin{subarray}{c}s_1,\cdots,s_4\geq 1;\\ \sum_{i=1}^4s_i=s+3 \end{subarray}}\Big(X^*\Big(\prod_{i=1}^3(\mG_1^{s_i}\mathscr{P}_{a_i}^{qk})\Big)\mG_1^{s_4}\bv\Big)_{k}\nonumber\\
&+\sum_{\begin{subarray}{c}s_1,s_2,s_3\geq 1;\\ \sum_{i=1}^3s_i=s+2 \end{subarray}}\Big(X^*\Big(\prod_{i=1}^2(\mG_1^{s_i}\mathscr{P}_{0}^{qk})\Big)\mG_1^{s_3}\bv\Big)_{k}\bigg).
\end{align}

Next, for $\mathcal P$ defined in \eqref{rankrrepresentmP}, we collect its derivatives  $$ \frac{\partial^s \mP}{\partial x_{qk}^s}=\sn \sum_{b=1}^2\mb{y}_b^*\frac{\partial^s \mG_1^b}{\partial x_{qk}^s}\bv_i$$ for $1\le s \le 4$. For the first derivative, we have
\begin{align}\label{eq:Pd1}
\frac{\partial \mP}{\partial x_{qk}}
&=-\sn\sum_{b=1}^2\sum_{\begin{subarray}{c}b_1,b_2\geq 1;\\ b_1+b_2=b+1 \end{subarray}}\Big((\mb{y}_b^*\mG_1^{b_1})_q(X^*\mG_1^{b_2}\bv_i)_k+(X^*\mG_1^{b_1}\mb{y}_b)_k(\mG_1^{b_2}\bv_i)_q\Big).
\end{align}
By \rf{est_XG} and Remark \ref{boundrmk}, it is easy to see that 

\begin{align}
\Big|\frac{\partial \mP}{\partial x_{qk}}\Big|\prec 1. \label{19072410}
\end{align}

The second derivative of $\mP$ is
\begin{align}\label{eq:Pd2}
 \frac{\partial^2 \mP}{\partial x_{qk}^2}&=2\sn\sum_{b=1}^2\Big(\sum_{a_1,a_2=1}^2\sum_{\begin{subarray}{c}b_1,b_2,b_3\geq 1;\\ \sum_{i=1}^3b_i=b+2 \end{subarray}}\mb{y}_b^*\Big(\prod_{j=1}^2(\mG_1^{b_j}\mathscr{P}_{a_j}^{qk})\Big)\mG_1^{b_3}\bv_i-\sum_{\begin{subarray}{c}b_1,b_2\geq 1;\\ b_1+b_2=b+1 \end{subarray}}\mb{y}_b^*\mG_1^{b_1}\mathscr{P}_0^{qk}\mG_1^{b_2}\bv_i\Big).
\end{align}
Recalling the definition of $\mathscr{P}^{qk}_i$ for $i=0,1,2$ in \rf{P012}, one observes that all terms in the parenthesis admit  one of the following forms 
\begin{align*}
(X^*\mG_1^{b_1}\mb{\eta}_1)_k(X^*\mG_1^{b_2})_{kq}(\mG_1^{b_3}\mb{\eta}_2)_q \qquad (X^*\mG_1^{b_1}\mb{\eta}_1)_k(\mG_1^{b_2})_{qq}(X^*\mG_1^{b_3}\mb{\eta}_2)_k \qquad 
(\mG_1^{b_1}\mb{\eta}_1)_q(X^*\mG_1^{b_2}X)_{kk}^a(\mG_1^{b_3}\mb{\eta}_2)_q
\end{align*}
for $\mb{\eta}_1,\mb{\eta}_2=\mb{y}_1,\mb{y}_2,\bv_i$, $a=0,1$ and $b_1,b_2,b_3=1,2$, which are bounded by $\onh$, $\onh$ and $O_\prec(1)$ respectively, in light of \rf{est_XG} and Remark \ref{boundrmk}. Therefore, combining with the prefactor $\sn$ in (\ref{eq:Pd2}), we get the bound 
\begin{align}
\Big| \frac{\partial^2 \mP}{\partial x_{qk}^2}\Big|\prec\sqrt{N}.  \label{19072411}
\end{align}

The third derivative of $\mP$ is
\begin{align}\label{eq:Pd3} 
\frac{\partial^3 \mP}{\partial x_{qk}^3}
&=-6\sn \sum_{b=1}^2\bigg(\sum_{a_1,a_2,a_3=1}^2\sum_{\begin{subarray}{c}b_1,\cdots,b_4\geq 1;\\\sum_{i=1}^4 b_i=b+3 \end{subarray}}\Big(\mb{y}_b^*\Big(\prod_{j=1}^3(\mG_1^{b_j}\mathscr{P}_{a_j}^{qk})\Big)\mG_1^{b_4}\bv_i\Big)\nonumber\\
&\quad-\sum_{(a_1,a_2)\in \mathcal{O}_2}\sum_{\begin{subarray}{c}b_1,b_2,b_3\geq 1;\\ \sum_{i=1}^3b_i=b+2 \end{subarray}}\Big(\mb{y}_b^*\Big(\prod_{j=1}^2(\mG_1^{b_j}\mathscr{P}_{a_j}^{qk})\Big)\mG_1^{b_3}\bv_i\Big)\bigg).
\end{align}
By plugging the definition of $\mathscr{P}^{qk}_a$ in (\ref{P012}), one can see that all summands  above contain at least one quadratic form of  $(X^*\mG_1^a)$ for some $a\geq 1$,  which  by \rf{est_XG} will contribute a $\onh$ factor. Using this fact, one can easily show the crude bound 

\begin{align}
\Big|\frac{\partial^3 \mP}{\partial x_{qk}^3}\Big|\prec 1.  \label{eq:bdPd3}
\end{align}
The fourth derivative of $\mP$ is
\begin{align}\label{eq:Pd4}
 \frac{\partial^4 \mP}{\partial x_{qk}^4}
&=4!\sn \sum_{b=1}^2\bigg(\sum_{a_1,\cdots,a_4=1}^2\sum_{\begin{subarray}{c}b_1,\cdots,b_5\geq 1;\\\sum_{i=1}^5 b_i=b+4 \end{subarray}}\Big(\mb{y}_b^*
\Big(\prod_{j=1}^4(\mG_1^{b_j}\mathscr{P}_{a_j}^{qk})\Big)\mG_1^{b_5}\bv_i\Big)\nonumber\\
&\quad-\sum_{(a_1,a_2,a_3)\in \mathcal{O}_3}\sum_{\begin{subarray}{c}b_1,\cdots,b_4\geq 1;\\\sum_{i=1}^4 b_i=b+3 \end{subarray}}\Big(\mb{y}_b^*\Big(\prod_{j=1}^3(\mG_1^{b_j}\mathscr{P}_{a_j}^{qk})\Big)\mG_1^{b_4}\bv_i\Big)\nonumber\\
&\quad+\sum_{\begin{subarray}{c}b_1,b_2,b_3\geq 1;\\ \sum_{i=1}^3b_i=b+2 \end{subarray}}\Big(\mb{y}_b^*\Big(\prod_{j=1}^2(\mG_1^{b_j}\mathscr{P}_{0}^{qk})\Big)\mG_1^{b_3}\bv_i\Big)\bigg). 
\end{align}

\bibliographystyle{unsrt}

\begin{thebibliography}{1}

\bibitem{BY08}
Z.D. Bai, J.F. Yao.
\newblock Central limit theorems for eigenvalues in a spiked population model. 
\newblock {\em  Annales de l'IHP Probabilit\'{e}s et statistiques} 44(3), 447--474, 2008.

\bibitem{BY12}
Z.D. Bai, J.F. Yao. 
\newblock On sample eigenvalues in a generalized spiked population model. 
\newblock {\em Journal of Multivariate Analysis}, 106, 167--177, 2012.

\bibitem{BBP}
J. Baik,  G. Ben Arous, and  S. P\'{e}ch\'{e}.   
\newblock Phase transition of the largest eigenvalue for nonnull complex sample covariance matrices.
\newblock {\em The Annals of Probability}, 33(5), 1643--1697, 2005.

\bibitem{BaikS}
J. Baik,  J.W. Silverstein.   
\newblock Eigenvalues of large sample covariance matrices of spiked population models. 
\newblock {\em Journal of multivariate analysis}, 97(6), 1382--1408, 2006.

\bibitem{BDW}
Z.G. Bao, X.C. Ding, and K.  Wang.  
\newblock Singular vector and singular subspace distribution for the matrix denoising model.
\newblock  {\em arXiv:1809.10476}, 2018.

\bibitem{BBCF}
S. Belinschi, H.  Bercovici, M.  Capitaine.  
\newblock On the outlying eigenvalues of a polynomial in large independent random matrices. 
\newblock {\em arXiv:1703.08102}, 2017.

\bibitem{BBCFAOP}
S. Belinschi, H.  Bercovici, M.  Capitaine,  and M.  F\'{e}vrier.  
\newblock Outliers in the spectrum of large deformed unitarily invariant models. 
\newblock {\em The Annals of Probability}, 45(6A), 3571-3625, 2017.

\bibitem{BGM11}
F. Benaych-Georges, A. Guionnet, and M.  Maida.
\newblock  Fluctuations of the extreme eigenvalues of finite rank deformations of random matrices. 
\newblock {\em Electronic Journal of Probability}, 16, 1621--1662, 2011.

\bibitem{BGN12}
F. Benaych-Georges, R. R.  Nadakuditi.  
\newblock The singular values and vectors of low rank perturbations of large rectangular random matrices. 
\newblock {\em Journal of Multivariate Analysis}, 111, 120--135, 2012.

\bibitem{BGN11}
F. Benaych-Georges, R. R.  Nadakuditi.   
\newblock The eigenvalues and eigenvectors of finite, low rank perturbations of large random matrices. 
\newblock {\em Advances in Mathematics}, 227(1), 494--521, 2011.


\bibitem{bloemendal2016principal}
A. Bloemendal, A. Knowles, H.-T. Yau, and J. Yin.
\newblock On the principal components of sample covariance matrices.
\newblock {\em Probability theory and related fields}, 164(1-2):459--552, 2016.

\bibitem{BVirag13}
A. Bloemendal, B. Vir\'{a}g.   
\newblock Limits of spiked random matrices I. 
\newblock {\em Probability Theory and Related Fields}, 156(3-4), 795--825, 2013.

\bibitem{BVirag16}
A. Bloemendal, B. Vir\'{a}g. 
\newblock Limits of spiked random matrices II. 
\newblock {\em The Annals of Probability}, 44(4), 2726--2769, 2016.

\bibitem{bloemendal2014isotropic}
A. Bloemendal, L. Erdos, A. Knowles, H.-T. Yau, and J. Yin.
\newblock Isotropic local laws for sample covariance and generalized wigner
  matrices.
\newblock {\em Electron. J. Probab}, 19(33):1--53, 2014.

\bibitem{Capitaine17}
M. Capitaine.  
\newblock Limiting eigenvectors of outliers for Spiked Information-Plus-Noise type matrices. 
\newblock {\em S\'{e}minaire de Probabilit\'{e}s XLIX} 119--164, Springer, Cham, 2018.

\bibitem{CDM16}
M. Capitaine, C. Donati-Martin. 
\newblock Spectrum of deformed random matrices and free probability. 
\newblock {\em arXiv:1607.05560}, 2016.

\bibitem{CDM18}
M. Capitaine, C. Donati-Martin. 
 \newblock Non universality of fluctuations of outlier eigenvectors for block diagonal deformations of Wigner matrices. 
 \newblock {\em arXiv:1807.07773}, 2018.

\bibitem{CDF09}
M. Capitaine, C. Donati-Martin, and D. F\'{e}ral.  
\newblock The largest eigenvalues of finite rank deformation of large Wigner matrices: convergence and nonuniversality of the fluctuations. 
\newblock {\em The Annals of Probability}, 37(1), 1--47, 2009.

\bibitem{CDF12}
M. Capitaine, C. Donati-Martin, and D. F\'{e}ral.  
 \newblock Central limit theorems for eigenvalues of deformations of Wigner matrices. 
 \newblock {\em Annales de l'IHP Probabilit\'{e}s et statistiques}  48(1),  107--133, 2012.
 
 \bibitem{Ding}
 X.C. Ding.  
 \newblock High dimensional deformed rectangular matrices with applications in matrix denoising.
 \newblock{\em Bernoulli (to appear)}, 2017.
 
 \bibitem{DY19}
  X.C. Ding, F. Yang
 \newblock Spiked separable covariance matrices and principal components.
 \newblock {\em arXiv:1905.13060}, 2019.

\bibitem{EKY2013}
L. Erd{\H o}s, A. Knowles, and H.-T. Yau.
\newblock Averaging fluctuations in resolvents of random band matrices.
\newblock {\em Ann. Henri Poincar{\'e}}, 14(8):1837--1926, 2013.

\bibitem{FFHL}
J. Fan, Y. Fan, X. Han, and  J.  Lv.  
\newblock Asymptotic theory of eigenvectors for large random matrices. 
\newblock{\em arXiv:1902.06846}, 2019.

\bibitem{FP07}
D. F\'{e}ral, S. P\'{e}ch\'{e}. 
\newblock The largest eigenvalue of rank one deformation of large Wigner matrices. 
\newblock{\em Communications in mathematical physics}, 272(1), 185--228, 2007.

\bibitem{HK}
Y. He,  A. Knowles.
\newblock   Mesoscopic eigenvalue statistics of Wigner matrices. 
\newblock{\em The Annals of Applied Probability}, 27(3), 1510-1550, 2017.

\bibitem{Johnstone}
I. M. Johnstone. 
\newblock On the distribution of the largest eigenvalue in principal components analysis. 
\newblock {\em The Annals of statistics}, 29(2): 295--327,  2001.

\bibitem{JY}
I. M. Johnstone,   J. Yang.  
\newblock Notes on asymptotics of sample eigenstructure for spiked covariance models with non-Gaussian data. 
\newblock {\em arXiv:1810.10427}, 2018.

\bibitem{KKP96}
A.M.  Khorunzhy, B.A. Khoruzhenko, and L.A.  Pastur.
\newblock Asymptotic properties of large random matrices with independent
  entries.
\newblock {\em Journal of Mathematical Physics}, 37(10):5033--5060, 1996.


\bibitem{KY13}
A. Knowles,  J. Yin. 
\newblock The isotropic semicircle law and deformation of Wigner matrices. 
\newblock {\em Communications on Pure and Applied Mathematics}, 66(11), 1663--1749, 2013. 

\bibitem{KY14}
A. Knowles,  J. Yin. 
\newblock The outliers of a deformed Wigner matrix. 
\newblock {\em The Annals of Probability}, 42(5), 1980--2031, 2014.

\bibitem{knowles2017anisotropic}
A. Knowles,  J. Yin.
\newblock Anisotropic local laws for random matrices.
\newblock {\em Probability Theory and Related Fields}, 169(1-2):257--352, 2017.


\bibitem{LS18}
J. O. Lee,  K.  Schnelli.   
\newblock Local law and Tracy-Widom limit for sparse random matrices. 
\newblock {\em Probability Theory and Related Fields}, 171(1-2), 543-616, 2018.



\bibitem{LV}
P. Loubaton,  P.  Vallet.  
\newblock Almost sure localization of the eigenvalues in a Gaussian information plus noise model. Application to the spiked models. 
\newblock {\em Electronic Journal of Probability}, 16, 1934--1959, 2011.

\bibitem{LP09}
A. Lytova and L. Pastur.
\newblock Central limit theorem for linear eigenvalue statistics of random
  matrices with independent entries.
\newblock {\em Ann. Probab.}, 37(5):1778--1840, 2009.

\bibitem{MP67}
V.A. Mar{\v{c}}enko, L.A. Pastur.
\newblock Distribution of eigenvalues for some sets of random matrices.
\newblock {\em Mathematics of the USSR-Sbornik}, 1(4):457, 1967.

\bibitem{MJMY}
D. Morales-Jimenez, I. M. Johnstone, M. R. McKay, and J.  Yang.   
\newblock Asymptotics of eigenstructure of sample correlation matrices for high-dimensional spiked models. 
\newblock {\em arXiv:1810.10214}, 2018.

\bibitem{Paul}
D. Paul.
\newblock  Asymptotics of sample eigenstructure for a large dimensional spiked covariance model. 
\newblock {\em Statistica Sinica}, 1617--1642, 2007.

\bibitem{Peche06}
S. P\'{e}ch\'{e}.
\newblock The largest eigenvalue of small rank perturbations of Hermitian random matrices. 
\newblock {\em Probability Theory and Related Fields}, 134(1), 127--173, 2006.

\bibitem{PY14}
 N. S. Pillai, J. Yin.
\newblock Universality of covariance matrices. 
\newblock{\em The Annals of Applied Probability}, 24(3), 935-1001,  2014.

\bibitem{PRS13}
A. Pizzo, D. Renfrew, and A. Soshnikov. 
\newblock  On finite rank deformations of Wigner matrices. 
\newblock{\em Annales de l'IHP Probabilit\'{e}s et statistiques} 49(1),  64--94, 2013.



\end{thebibliography}

\end{document}